\numberwithin{equation}{section}
\theoremstyle{plain}
\newtheorem{theorem}{Theorem}
\newtheorem{prop}{Proposition}
\newtheorem{lemma}{Lemma}[section]
\newtheorem{hypo}{Hypothesis}
\newtheorem*{lemma*}{Lemma}
\renewenvironment{proof}{\noindent\textbf{Proof }}{\hspace*{\fill}$\Box$\medskip}
\def \ind {\mathds{1}}
\def \nug {\boldsymbol{\nu}}
\def \pg {\textbf{p}}
\def \Pg {\textbf{P}}
\def \dsp {\overline{u}}
\def \Ldsp {{\Lcc}}
\def \dspD {\overline{u}^\delta}
\def \LdspD {{L^\delta}}
\def \hg {\textbf{h}}
\def \QD {Q^\delta}
\def \Rayon {R}
\def \C {\mathbb{C}}
\def \E {\mathbb{E}}
\def \N {\mathbb{N}}
\def \R {\mathbb{R}}
\def \S {\mathbb{S}}
\def \Z {\mathbb{Z}}
\def \Cc {\mathcal{C}}
\def \Fc {\mathcal{F}}
\def \Lc {\mathcal{L}}
\def \Nc {\mathcal{N}}
\def \Oc {\mathcal{O}}
\def \Tc {\mathcal{T}}
\def \Uc {\mathcal{U}}
\def \Ccc {\mathscr{C}}
\def \Gcc {\mathscr{G}}
\def \Lcc {\mathscr{L}}
\begin{document}
	\begin{center}
		{\large \bfseries Nonlinear orbital stability of stationary discrete shock profiles for scalar conservation laws}
	\end{center}
	
	\begin{center}
		Lucas \textsc{Coeuret}\footnote{Dipartimento di Matematica “Tullio Levi-Civita”, Università di Padova, Via Trieste 63, 35121 Padova, Italy. ORCID: 0009-0009-6746-4786.  Research of the author was supported by the Agence Nationale de la Recherche project Indyana (ANR-21-CE40-0008), as well as by the Labex Centre International de Mathématiques et Informatique de Toulouse under grant agreement ANR-11-LABX-0040. The author also would like to thank the Italian Ministry of University and Research (MUR) to support this research with funds coming from PRIN Project 2022 PNRR (No. 2022XJ9SX entitled “Heterogeneity on the road - Modeling, analysis, control”). E-mail: lucas.coeuret@math.unipd.it}
	\end{center}
	
		\vspace{5mm}
	
	\begin{center}
		\textbf{Abstract}
	\end{center}
	
	\noindent For scalar conservation laws, we prove that spectrally stable stationary Lax discrete shock profiles are nonlinearly stable in some polynomially-weighted $\ell^1$ and $\ell^\infty$ spaces. In comparison with several previous nonlinear stability results on discrete shock profiles, we avoid the introduction of any weakness assumption on the amplitude of the shock and apply our analysis to a large family of schemes that introduce some artificial possibly high-order viscosity. The proof relies on a precise description of the Green's function of the linearization of the numerical scheme about spectrally stable discrete shock profiles obtained in \cite{Coeuret2024d}. The present article also pinpoints the ideas for a possible extension of this nonlinear orbital stability result for discrete shock profiles in the case of systems of conservation laws.
	
	\vspace{5mm}
	
	\textbf{MSC classification:} 35L65, 65M06
	
	\vspace{5mm}
	
	\textbf{Keywords:} conservation laws, finite difference scheme, discrete shock profiles, nonlinear stability
	
	\section*{Notations}
	
	For $1\leq r <+\infty$, we let $\ell^r(\Z)$ denote the Banach space of complex valued sequences indexed by $\Z$ and such that the norm:
	$$\left\|u\right\|_{\ell^r}:=\left(\sum_{j\in\Z}|u_j|^r\right)^\frac{1}{r}$$
	is finite. We also let $\ell^\infty(\Z)$ denote the Banach space of bounded complex valued sequences indexed by $\Z$ equipped with the norm
	$$\left\|u\right\|_{\ell^\infty}:=\sup_{j\in \Z}|u_j|.$$
	Within the article, we will also introduce some polynomially-weighted $\ell^r$-spaces defined by \eqref{def:EspAPoids}.
	
	Furthermore, for $E$ a Banach space, we denote $\Lc(E)$ the space of bounded operators acting on $E$ and $\left\|\cdot\right\|_{\Lc(E)}$ the operator norm. For $T$ in $\Lc(E)$, the notation $\sigma(T)$ stands for the spectrum of the operator $T$ and $\rho(T)$ denotes the resolvent set of $T$.
	
	Finally, we may use the notation $\lesssim$ to express an inequality up to a multiplicative constant. Eventually, we let $C$ (resp. $c$) denote some large (resp. small) positive constants that may vary throughout the text (sometimes within the same line). Furthermore, we use the usual Landau notation $O(\cdot)$ to introduce a term uniformly bounded with respect to the argument.
	
	\tableofcontents

	\section{Introduction and main result}
	
	This introductory section is separated in four parts. First, in Section \ref{subsec:PosProb}, we will recall the notions of \textit{(stationary) discrete shock profiles} and of \textit{nonlinear orbital stability}. We will also present a quick state of the art on the subject of the stability of discrete shock profiles. Sections \ref{subsec:Lin} and \ref{subsec:Green} will be dedicated to recall the notion of \textit{spectral stability} of a discrete shock profile as well as to recall the result \cite[Theorem 1]{Coeuret2024d} which will play a central role in the present article. Finally, the main result Theorem \ref{Th} of the article will be stated in Section \ref{subsec:MainRes}. It is a nonlinear orbital stability result for spectrally stable stationary discrete shock profiles of scalar conservation laws. We will also present the improvements and limitations of Theorem \ref{Th} with respect to the aforementioned state of the art.
	
	\subsection{Position of the problem}\label{subsec:PosProb}
	
	\vspace{0.1cm}\textbf{\underline{Definition of the scalar conservation law and the stationary Lax shock}}\vspace{0.1cm}
	
	We consider a one-dimensional scalar conservation law
	\begin{equation}\label{def:EDP}
		\begin{array}{c}
			\partial_t u+\partial_x f(u)=0, \quad t\in\R_+, x\in\R,\\
			u:\R_+\times\R\rightarrow \Uc,
		\end{array}
	\end{equation}
	where the space of states $\Uc$ is an open set of $\R$ and the flux $f:\Uc\rightarrow \R$ is a $\Ccc^\infty$ function. Solutions of conservation laws tend to have discontinuities, even for smooth initial data. When considering approximations of solutions of conservation laws by finite difference methods, one of the central questions concerns the ability of approaching discontinuities. Indeed, finite difference methods are usually deduced using Taylor expansions and are thus essentially more adapted to approximate smooth solutions. Thus, a first step is to answer whether shocks are well approximated by finite difference schemes. The notion of discrete shock profiles defined below is a central tool for this purpose.
	
	In the present paper, we focus our attention on the approximation of stationary Lax shocks by finite difference methods. We consider two states $u^-,u^+\in\Uc$ such that:
	\begin{equation}\label{cond:RK}
		f(u^-)=f(u^+).
	\end{equation}
	This corresponds to the well-known Rankine-Hugoniot condition which allows us to conclude that the standing shock $u$ defined by:
	\begin{equation}\label{def:choc}
		\forall t\in\R_+,\forall x\in\R,\quad u(t,x):=\left\{ \begin{array}{cc}u^- & \text{ if }x<0, \\ u^+ & \text{ else},\end{array}\right.
	\end{equation}
	is a weak solution of \eqref{def:EDP}. We also introduce the so-called Lax shock condition (see \cite{Lax1957}):
	\begin{equation}\label{cond:Lax}
		f^\prime(u^+)<0<f^\prime(u^-).
	\end{equation}
	Let us point out that starting by studying the discrete approximation of Lax shocks is fairly logical. Indeed, for convex fluxes $f$, Lax shocks correspond exactly to the entropic shocks. Furthermore, for more general fluxes, weak genuinely nonlinear Lax shocks are also entropic solutions of \eqref{def:EDP}.
	
	\vspace{0.1cm}\noindent\textbf{\underline{Definition of the numerical scheme and of the discrete shock profiles}}\vspace{0.1cm}
	
	We consider a constant $\nug>0$ and introduce a space step $\Delta x>0$ and a time step $\Delta t:=\nug \Delta x>0$. We impose the following CFL condition on the constant $\nug$:\footnote{Up to considering that the space of state $\Uc$ is a close neighborhood of the SDSPs $\dspD$ defined underneath in Hypothesis \ref{H:SDSP}, we should be able to satisfy such a condition.}
	\begin{equation}\label{cond:CFL}
		\forall u\in\Uc,\quad -q< \nug f^\prime(u)<p.
	\end{equation}	
	We introduce the \textit{nonlinear} discrete evolution operator $\Nc: \Uc^\Z \rightarrow \R^\Z$ defined for $u=(u_j)_{j\in\Z}\in\Uc^\Z$ as:
	\begin{equation}\label{def:evolOpe}
		\forall j\in \Z, \quad (\Nc (u))_j := u_j- \nug\left(F\left(\nug;u_{j-p+1},\hdots, u_{j+q}\right)-F\left(\nug;u_{j-p},\hdots, u_{j+q-1}\right)\right),
	\end{equation}
	where $p,q\in\N\backslash\lbrace0\rbrace$ and the numerical flux $F:(\nu;u_{-p},\hdots, u_{q-1})\in]0,+\infty[\times \Uc^{p+q}\rightarrow \R$ is a $\Cc^\infty$ function. We are interested in solutions of the conservative one-step explicit finite difference scheme defined by: 
	\begin{equation}\label{def:SchemeNum}
		\forall n\in\N,\quad u^{n+1}=\Nc (u^n)
	\end{equation}
	with the initial condition $u^0\in\Uc^\Z$. 
	
	We assume that the numerical scheme satisfies the following usual consistency condition with regards to the conservation law \eqref{def:EDP}:
	\begin{equation}\label{cond:consistency}
		\forall u \in\Uc, \quad F(\nug;u,\hdots,u)=f(u).
	\end{equation}
	Let us point out that we will also introduce an additional hypothesis later on (Hypothesis \ref{H:F}) on the numerical schemes considered in this article which essentially states that the scheme introduces some artificial viscosity.
	
	Let us now introduce the so-called \textit{discrete shock profiles} which will be the main mathematical object of the article. The presentation done here will be fairly brief and we invite the interested reader to take a closer look at \cite{Serre2007} for a general state of the art on discrete shock profiles.
	
	In a general setting (i.e. for moving shocks), a discrete shock profile is a solution of the numerical scheme \eqref{def:SchemeNum} which is a traveling wave linking the two states of the shock. We pinpoint the fact that the consistency condition \eqref{cond:consistency} satisfied by the numerical scheme implies that the existing discrete shock profiles must travel at the same speed as the shock they are associated with while recalling that this speed is deduced using the well-known Rankine-Hugoniot condition (see \cite{Serre2007} for a detailed proof). In the case of stationary shocks which is studied in the present paper, stationary discrete shock profiles (SDSPs) are thus stationary solution of the numerical scheme \eqref{def:SchemeNum} linking the two states $u^-$ and $u^+$ of the shock.
	
	\begin{figure}
		\centering
		\includegraphics[width=0.9\textwidth]{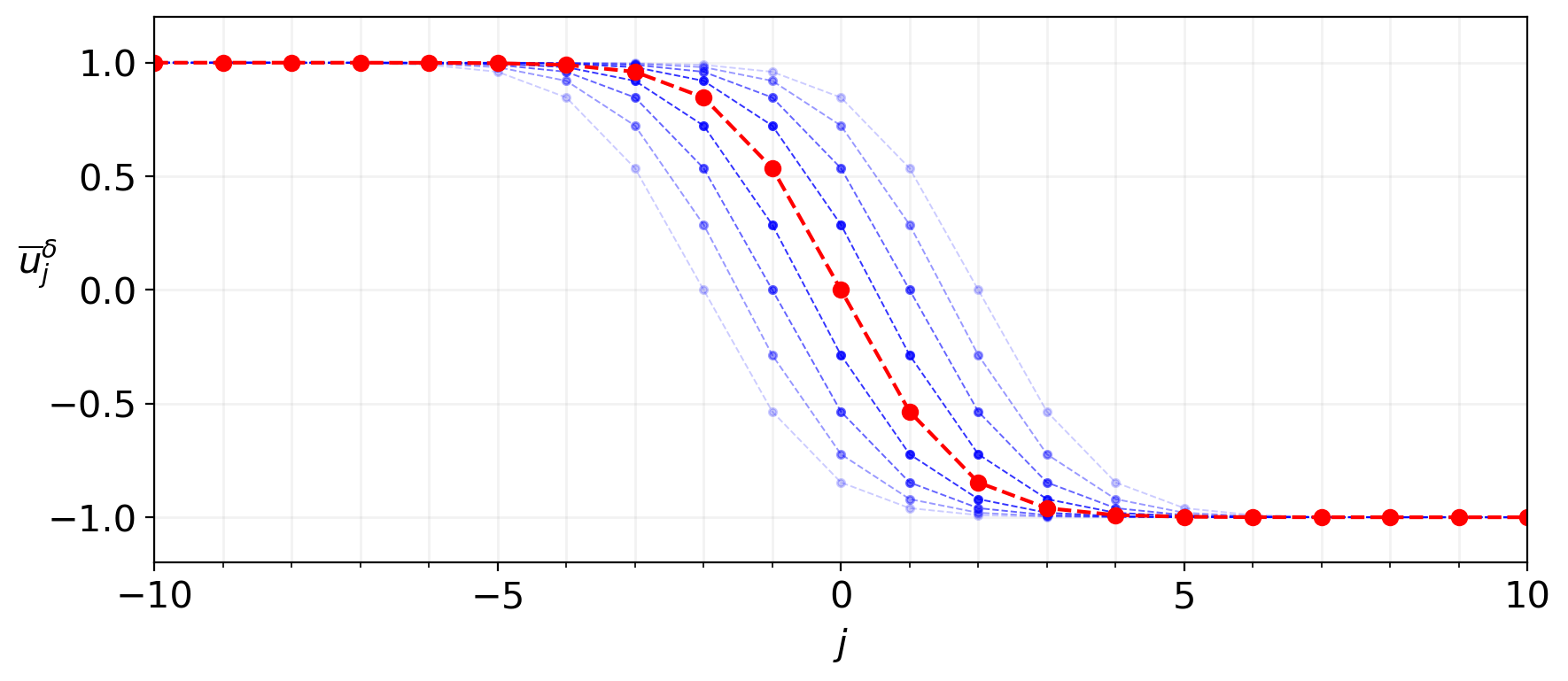}
		\caption{An example of stationary discrete shock profiles (SDSPs). The chosen scalar conservation law is Burger's equation $f(u):=\frac{u^2}{2}$. The numerical scheme is the modified Lax-Friedrichs scheme defined below as \eqref{def:MLF} with $\nug=0.5$ and $D=\frac{0.4}{\nug}$ and the shock considered is the one associated with the states $u^-=1$ and $u^+=-1$. The red solution is a SDSP as well as all the blue ones. There exists a continuum of discrete shock profiles as stated in Hypothesis \ref{H:SDSP}. We also observe that Hypotheses \ref{H:CVExpo} and \ref{H:identification} seem to be verified.}
		\label{fig:SDSP}
	\end{figure}
	
	\begin{hypo}[Existence of a family of stationary discrete shock profiles (SDSPs)]\label{H:SDSP}
		There exists an open interval $I$ containing $0$ and a continuous family of sequences $(\dspD)_{\delta\in I} \in\left(\Uc^\Z \right)^I$ that satisfy for each $\delta\in I$: 
		$$\Nc(\dspD)=\dspD \quad \text{and} \quad \dspD_j\underset{j\rightarrow \pm\infty}\rightarrow u^\pm.$$
	\end{hypo}

	 The sequences $\dspD$ are stationary discrete shock profiles (SDSPs) associated with the shock \eqref{def:choc} we introduced earlier. We refer to Figure \ref{fig:SDSP} for a visual representation of such SDSPs. The existence theory of discrete shock profiles, and thus the verification of Hypothesis \ref{H:SDSP}, has been a longstanding question \cite{Jennings1974,Majda1979,Michelson1984,Smyrlis1990,Serre2004}. We refer the interested reader to \cite{Serre2007} for a general overview on the existence theory. To quickly summarize, most existence results tend to necessitate fairly strict additional hypotheses. For instance, in \cite{Jennings1974}, one can find an existence result for discrete shock profiles associated with Lax shocks for \textit{monotone} schemes approximating \textit{scalar} conservation laws. On the other hand, existence results of discrete shock profiles for \textit{systems }of conservation laws such as \cite{Majda1979,Michelson1984} tend to introduce a \textit{weakness} assumption on the amplitude of the approximated shocks.
	 
	 Let us point out that, in the present article, one of the discrete shock profile will be fixed as a reference discrete shock profile compared to the others and will have to satisfy additional hypotheses, in particular a \textit{spectral stability} assumption (see Hypotheses \ref{H:spec} and \ref{H:Evans} below). We thus assumed in Hypothesis \ref{H:SDSP} that $0$ belongs to the interval $I$ and we will use the notation:
	 \begin{equation}\label{def:DSP}
	 	\dsp:=\overline{u}^0
	 \end{equation}
	 to identify this discrete shock profile more clearly from now on. The additional assumptions on the discrete shock profile $\dsp$ that we will introduce aim at allowing the use of the result \cite[Theorem 1]{Coeuret2024d} in Section \ref{subsec:Green}.
	 
	 We now introduce two additional assumptions (Hypotheses \ref{H:CVExpo} and \ref{H:identification}) on the family $(\dspD)_{\delta\in I}$ of SDSPs.
	
	\begin{hypo}[Exponential localization]\label{H:CVExpo}
		We assume that there exist two positive constants $C,c$ such that:
		\begin{subequations}\label{DSP_CV_ExpoEtatLim}
			\begin{align}
				\forall j \in\N,\quad & \left|\dsp_j-u^+\right|\leq Ce^{-cj},\label{DSP_CV_Expo+}\\
				\forall j \in\N,\quad & \left|\dsp_{-j}-u^-\right|\leq Ce^{-cj}.\label{DSP_CV_Expo-}
			\end{align}
		\end{subequations}
		Up to considering a smaller interval $I$ for the family of SDSPs $(\dspD)_{\delta \in I}$, we will also assume that there exist two positive constants $C,c$ such that:
		\begin{equation}
			\forall \delta\in I,\forall j \in\Z,\quad \left|\dspD_j-\dsp_j\right| \leq C\left|\delta\right|e^{-c|j|}\label{DSP_CV_ExpoDelta}
		\end{equation}
		and that the set 
		\begin{equation}\label{set:CT}
			\left\{\dspD_j, \quad(\delta,j )\in I\times \Z \right\}
		\end{equation}
		is relatively compact in the space of states $\Uc$.
	\end{hypo}
	
	The inequality \eqref{DSP_CV_ExpoEtatLim} corresponds to an exponential localization of the transition zone of the sequence $\dsp$ between the endstates $u^+$ and $u^-$ while the inequality \eqref{DSP_CV_ExpoDelta} is essentially linked to some Lipschitz continuity property of the family of SDSPs $(\dspD)_{\delta \in I}$. We claim that those two inequalities might be obtained fairly similarly and should be a consequence of the shock being non-characteristic. Indeed, one can find proofs of similar properties on viscous approximations of Lax shocks (see \cite[Corollary 1.2]{Zumbrun1998}) or on semi-discrete approximations of moving Lax shocks (see \cite[Proposition 2.4]{BenzoniGavage2003} and \cite[Lemma 1.1]{Beck2010}). Let us point out that the exponential localization \eqref{DSP_CV_ExpoEtatLim} is central in \cite{Coeuret2024d} to study the spectrum of the operator $\Ldsp:=L^0$ defined below by \eqref{def:linearizedScheme} which corresponds to the linearization of the numerical scheme about the discrete shock profile $\dsp$. 
	
	With regards to the relative compactness of the set \eqref{set:CT}, we claim that it is actually a consequence of \eqref{DSP_CV_ExpoEtatLim} and \eqref{DSP_CV_ExpoDelta}. Indeed, since the space of states $\Uc$ is open and using \eqref{DSP_CV_ExpoEtatLim} and \eqref{DSP_CV_ExpoDelta}, there exist a radius $\Rayon>0$ and an integer $j_0\in\N$ such that:
	$$ \left\{\dspD_j, \quad (\delta,j)\in I\times \Z \text{ with } |j|> j_0\right\}\subset B(u^+,\Rayon)\cup B(u^-,\Rayon) \subset \Uc.$$
	One can then easily obtain the relative compactness of \eqref{set:CT} up to introducing a smaller open interval $I$ such that $0$ belongs to it and the sets $\left\{\dspD_j,\delta\in I\right\}$ are relatively compact in $\Uc$ for $j \in \left\{-j_0,\hdots, j_0\right\}$.

	We observe that the inequality \eqref{DSP_CV_ExpoDelta} allows us to define the real valued continuous "mass function" $M$ defined by:
	\begin{equation}\label{def:M}
		\forall \delta\in I,\quad M(\delta):= \sum_{j\in\Z} \dspD_j-\dsp_j.
	\end{equation}
	The \textit{mass} function $M$ corresponds to summing the differences between the elements of a discrete shock profile $\dspD$ and the reference discrete shock profile $\dsp:=\dsp^0$. Let us introduce a hypothesis related to the function $M$.	
	
	\begin{hypo}[Identification by mass]\label{H:identification}
		Up to considering a smaller open interval $I$ containing $0$ for the family of SDSPs $(\dspD)_{\delta \in I}$, we will assume that the mass function $M$ is injective and that there exists a constant $C_\delta>0$ such that:
		\begin{equation}\label{in:M}
			\forall \delta \in I,\quad |\delta|\leq C_\delta |M(\delta)|. 
		\end{equation}		
	\end{hypo}
	
	The injectivity condition we impose on the mass function $M$ implies that each discrete shock profile $\dspD$ for $\delta \in I$ is identified by its mass difference $M(\delta)$ with the reference profile $\dsp$. A similar result was already obtained in \cite[Theorem 2.1]{Smyrlis1990} for specific discrete shock profiles of the Lax-Wendroff scheme. Let us add that the mass function $M$ shares several links with the so-called $Y$ function introduced in \cite{Serre2007} and that the analysis of the $Y$ function of Serre tends to supports the claim of a linear behavior of the mass function $M$.
	
	The mass function $M$ will be central in the nonlinear orbital stability result Theorem \ref{Th} of the present article, as we will explain in the following paragraph. Let us point out that if we were able to prove that the mass function $M$ was of class $\Cc^1$ on $I$ and that we have:
	\begin{equation}\label{MPrimeneq0}
		M^\prime(0)\neq0,
	\end{equation}
	then we immediately obtain Hypothesis \ref{H:identification}. Up to having sufficient regularity on the family of discrete shock profiles $\left(\dspD\right)_{\delta\in I}$ and on the function $M$, we will show at the end of Section \ref{subsec:Green} that we can actually obtain \eqref{MPrimeneq0} and thus the verification of Hypothesis \ref{H:identification} using the result \cite[Theorem 1]{Coeuret2024d}. 
	
	\vspace{0.1cm}\noindent\textbf{\underline{Stability theory for discrete shock profiles and goal of the present article}}\vspace{0.1cm}

	In the following paragraph, we will present the stability theory for discrete shock profiles and where Theorem \ref{Th} of the present article stands with respect to the state of the art. We point out that, even though this article focuses on discrete shock profiles for \textit{scalar} conservation laws, a crucial point is that most of the discussion on the stability of discrete shock profiles that follows also holds for and tackles the case of \textit{systems} of conservation laws.
	
	\begin{figure}
		\begin{center}
			\includegraphics[width=0.33\textwidth]{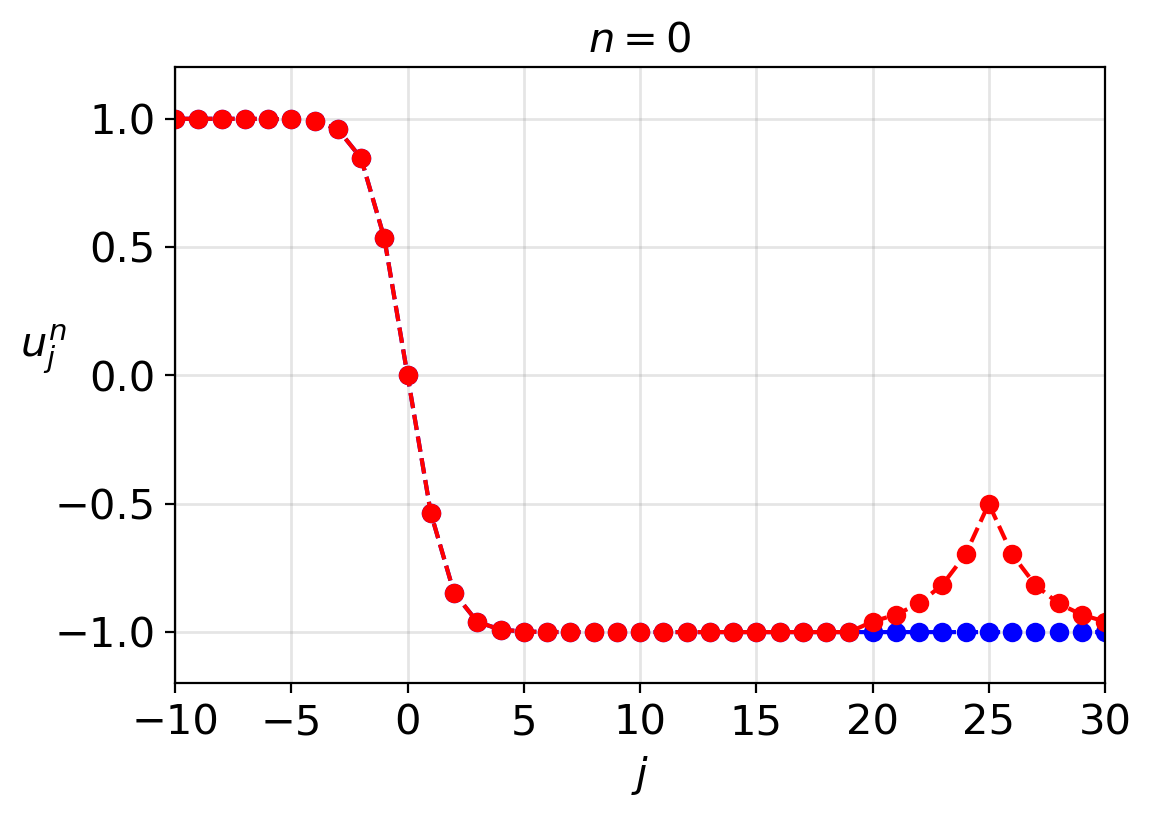} \includegraphics[width=0.33\textwidth]{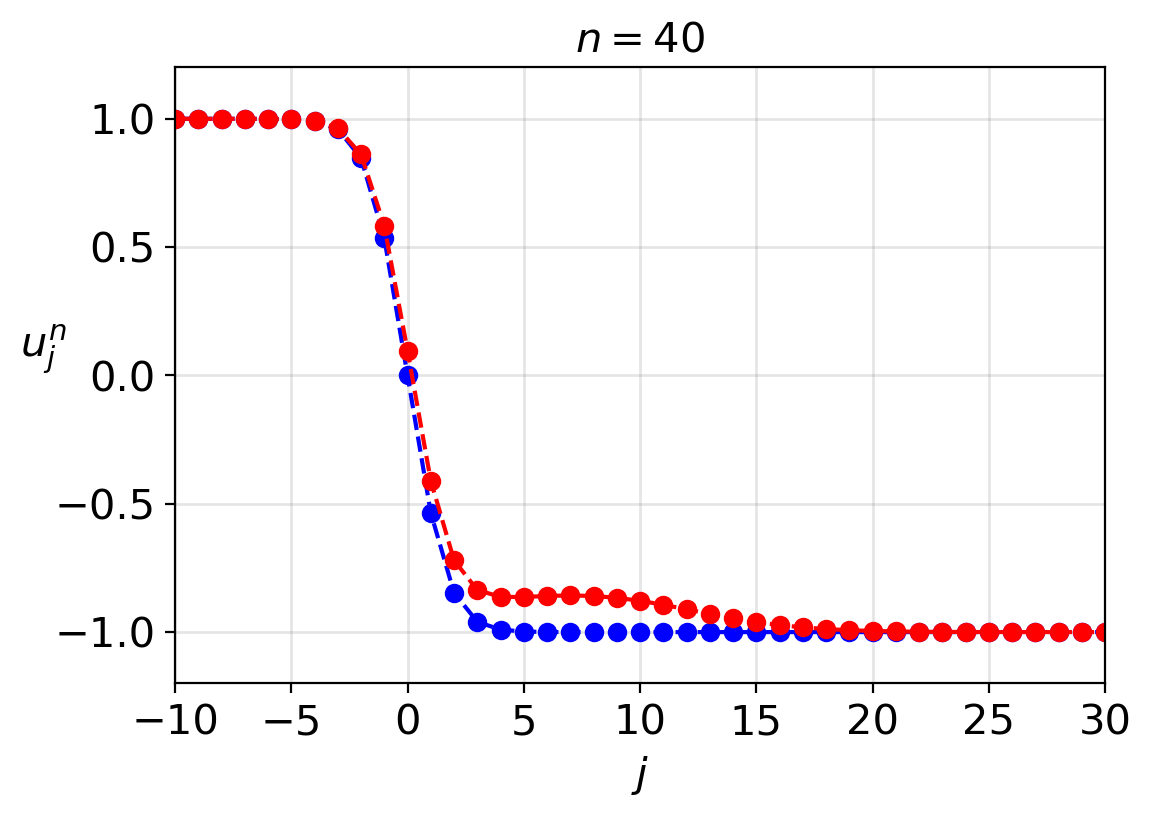}\includegraphics[width=0.33\textwidth]{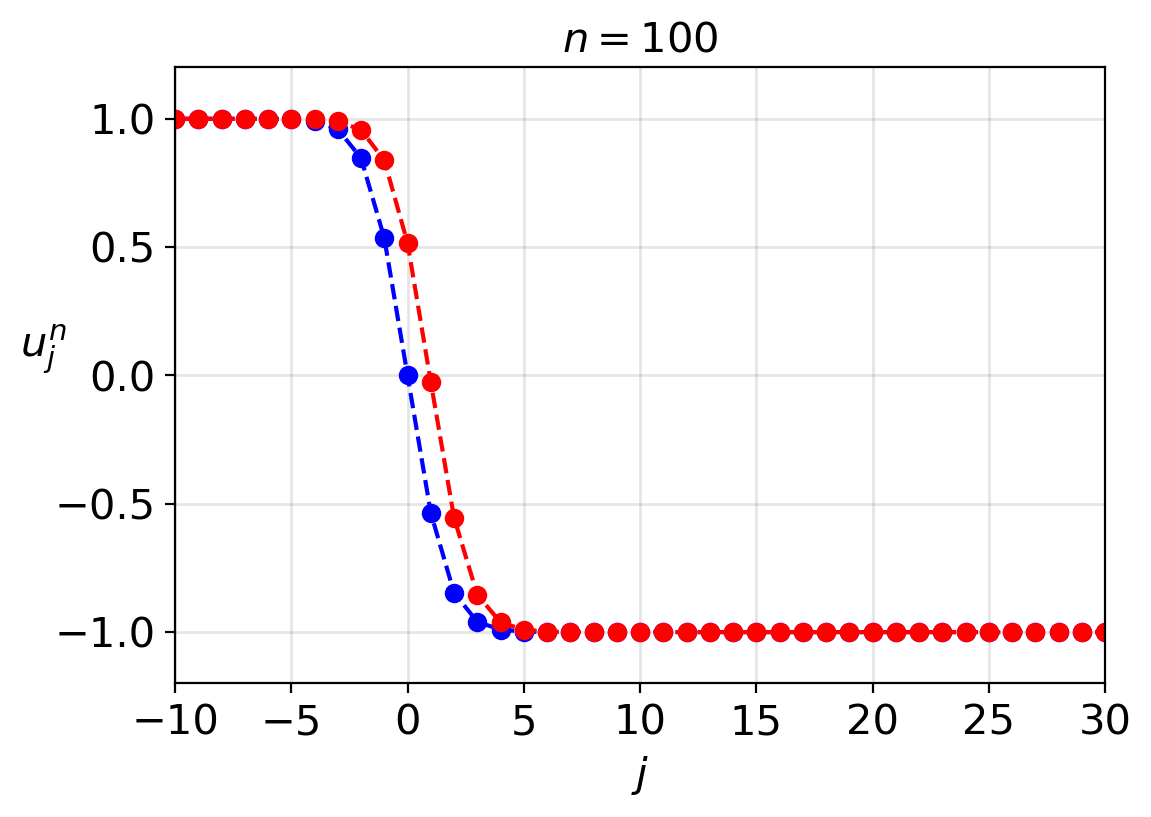}
		\end{center}
		\caption{An example of nonlinear orbital stability of a discrete shock profile. On the left figure, we represent in blue a discrete shock profile $\dsp$ and in red a perturbation of the discrete shock profile $u^0:=\dsp+\hg$. The figures in the middle and on the right represent the evolution in time of the solution $u^n$ of the numerical scheme \eqref{def:SchemeNum} with the initial condition $u^0$. We see that, in the long run, the sequence $(u^n)_{n\in\N}$ converges towards some other discrete shock profile $\dspD$ associated with the same shock. More precisely, the solution $u^n$ seems to converge towards the member of the family $(\dspD)_{\delta\in I}$ defined by the equality \eqref{IDMasseFinal} since the sum of the differences between the elements of the red and blue sequences is conserved in time.}
		\label{fig:perturb}
	\end{figure}
	
	The main subject of this article handles \textit{nonlinear orbital stability} of stationary discrete shock profiles. It amounts to finding two suitable Banach spaces $X$ and $Y$ (for instance weighted $\ell^r$-spaces) such that we can obtain a result of the following form, where the continuous one-parameter family of discrete shock profiles $(\dspD)_{\delta\in I}$ associated with $\dsp$ appears:
	\begin{center}
		\noindent\fbox{\begin{minipage}{0.93\textwidth}
				\noindent There exists a positive constant $\varepsilon>0$ such that for an initial pertubation $\hg\in X$ such that:
				$$\left\|\hg\right\|_X<\varepsilon, $$
				then the solution $(u^n)_{n\in\N}$ of the numerical scheme \eqref{def:SchemeNum} associated with the initial condition $u^0:=\overline{u}+\hg$ is defined for all time $n\in\N$ and we have:
				\begin{equation}\label{StabNonlin}
					\inf_{\delta\in I}\left\|u^n-\overline{u}^\delta\right\|_{Y}\underset{n\rightarrow +\infty}\rightarrow  0.
				\end{equation}
		\end{minipage}}
	\end{center}
	Figure \ref{fig:perturb} gives a representation of this result. Essentially, the nonlinear orbital stability states that, for small perturbations $u^0:= \dsp+\hg$ of a discrete shock profile $\dsp$, the solution $(u^n)_{n\in\N}$ of the numerical scheme \eqref{def:SchemeNum} converges towards the family $\left(\overline{u}^\delta\right)_{\delta\in I}$ of discrete shock profiles associated with the initial discrete shock profile $\dsp$. Let us make some important observations. 
	
	First, in the statement of nonlinear orbital stability results, one can actually hope that the solution $(u^n)_{n\in\N}$ converges towards a specific element of the family $\left(\overline{u}^\delta\right)_{\delta\in I}$, i.e. one might prove the following assertion rather than \eqref{StabNonlin}:
	\begin{equation}\label{StabNonlin_EtatFin}
		\exists \delta \in I,\quad \left\|u^n-\dspD\right\|_{Y}\underset{n\rightarrow +\infty}\rightarrow  0.
	\end{equation}
	Furthermore, due to the conservative nature of the numerical scheme \eqref{def:SchemeNum}, we claim that the solution $(u^n)_{n\in\N}$ of the numerical scheme \eqref{def:SchemeNum} for an initial condition $u^0:=\dsp+\hg$ satisfies:
	\begin{equation}\label{IDMasseTemp}
		\forall n\in\N, \quad \sum_{j\in\Z}u^n_j-\dsp_j =  \sum_{j\in\Z}u^0_j-\dsp_j= \sum_{j\in\Z}\hg_j.
	\end{equation}
	As a consequence, if the solution $(u^n)_{n\in\N}$ of the numerical scheme converges for instance in the $\ell^1$-norm towards a specific discrete shock profile $\dspD$ with $\delta \in I$, then \eqref{IDMasseTemp} implies that:
	\begin{equation}\label{IDMasseFinal}
		M(\delta)=\sum_{j\in\Z}\hg_j
	\end{equation}
	where the mass function $M$ is defined by \eqref{def:M}. Since Hypothesis \ref{H:identification} implies that the function $M$ is injective, there exists a unique choice of $\delta\in I$ such that the solution $(u^n)_{n\in\N}$ of the numerical scheme \eqref{def:SchemeNum} with the initial condition $u^0:=\dsp+\hg$ converges towards $\dspD$ and the choice of $\delta$ is determined by \eqref{IDMasseFinal}. We observe that Hypothesis \ref{H:identification} is therefore crucial. Indeed, if the function $M$ was not injective, then one could not identify a unique discrete shock profile towards which the solution $(u^n)_{n\in\N}$ should converge and proving a nonlinear orbital stability result would be difficult, if not impossible. We recall that we will discuss on the verification of Hypothesis \ref{H:identification} at the end of Section \ref{subsec:Green}.
	
	Let us make a second observation. Within the definition of the nonlinear stability results, it is not clear and easy to prove that the solution $(u^n)_{n\in\N}$ of the numerical scheme \eqref{def:SchemeNum} associated with an initial condition $u^0:=\dsp+\hg$ is defined for all time $n\in\N$. Indeed, it is possible that the solution $u^n$ leaves the domain of definition $\Uc^\Z$ of the nonlinear evolution operator $\Nc$ defined by \eqref{def:evolOpe}, i.e. there can be a time $n\in\N$ and an integer $j\in\Z$ such that $u^n_j$ leaves the space of states $\Uc$. This is also a important point to prove.
	
	Several articles have proved nonlinear orbital stability results for discrete shock profiles associated with stationary (and moving) Lax shocks (see \cite{Jennings1974,Smyrlis1990,Liu1993a,Liu1993,Ying1997,Liu1999a,Michelson2002}). However, those results have recurring limitations that one would want to avoid:
	\begin{itemize}
		\item Some stability results are strictly restricted for discrete shock profiles of \textit{scalar} conservation laws. It is for instance the case of \cite{Jennings1974} which uses intensively a \textit{monotonicity} assumption on the choice of numerical schemes studied, which can quite obviously not be generalized to the case of \textit{systems} of conservation laws. However, those results tend to be less subject to the following limitations.
		\item In the case of \textit{systems} of conservation laws, known stability results for discrete shock profiles impose a weakness assumption on the amplitude of the shock associated with the discrete shock profiles (see for instance \cite{Liu1993a,Liu1993,Ying1997,Liu1999a,Michelson2002}).
		\item Several result are applied to fairly restrictive families of numerical schemes, for instance, \textit{monotone} conservative finite difference schemes \cite{Jennings1974}, the Lax-Wendroff scheme for \cite{Smyrlis1990}  and the Lax-Friedrichs scheme in \cite{Liu1993a,Liu1993,Ying1997}.  
		\item Some results impose strong restrictions on the initial perturbations $\hg$ considered (i.e. on the choice of the vector space $X$ in the statement of the nonlinear stability result above). For instance, most result consider the initial perturbation $\hg$ to be small in some weighted $\ell^r$-spaces. However, there can be significant differences in the choices of the weights. For instance, the initial perturbation in \cite[Theorem 4.1]{Smyrlis1990} belong to an exponentially-weighted $\ell^2$-space whereas the main results of \cite{Liu1993a,Liu1993} only consider small initial perturbations in a polynomially-weighted $\ell^2$-space. Another example of restriction that can be imposed is a zero-mass assumption on the initial perturbation $\hg$. We observe in this context that the identification equation \eqref{IDMasseFinal} implies that the solution $(u^n)_{n\in\N}$ should then converge towards the initial discrete shock profile $\dsp:=\dsp^0$ since $M(0)=0$. This is for instance the case in \cite{Liu1993a,Liu1993}, though those results were generalized for nonzero-mass perturbations in \cite{Ying1997}.
		\item The results presented previously do not have any decay rates for the convergence \eqref{StabNonlin_EtatFin}, which could be a desirable feature.
	\end{itemize}
	
	One would want to prove a nonlinear stability result that discards or replaces several limitations of the previously presented results. Most importantly, we wish to avoid the introduction of a weakness assumption. An option in this direction, which is presented in \cite[Open Question 5.3]{Serre2007}, would be to generalize in the case of discrete shock profiles the stability analysis initiated for viscous and relaxation approximations of shocks in \cite{Zumbrun1998,Zumbrun2000,Mascia2002,Mascia2003}. The main idea would be essentially to prove that the \textit{spectral stability} of a discrete shock profile implies its nonlinear orbital stability. This \textit{spectral stability} assumption corresponds to a hypothesis on the point spectrum of the linearization of the numerical scheme $\Nc$ about a discrete shock profile. It will be defined more precisely below in Hypotheses \ref{H:spec} and \ref{H:Evans}. Considering this spectral stability assumption avoids the introduction of a weakness assumption on the strength of the approximated shocks. Furthermore, the analysis can be carried for a large family of numerical schemes contrarily to many known results. The stability analysis described in \cite{Zumbrun1998,Zumbrun2000,Mascia2002,Mascia2003} was already generalized for semi-discrete approximations of shocks in \cite{BenzoniGavage2003,Beck2010}. Up to the author's knowledge, the combination of \cite{Coeuret2024d} and the present article correspond to the first complete generalization of these techniques for discrete shock profiles, i.e. in a fully discrete setting, to conclude a nonlinear orbital stability result (Theorem \ref{Th}), at least for scalar conservation laws, and thus offer a partial answer to the \cite[Open Question 5.3]{Serre2007}. 
	
	\subsection{Linearizations of the numerical scheme about discrete shock profiles and spectral stability}\label{subsec:Lin}
	
	One of the central element that we will use in the present article is an in-depth comprehension of the linearization of the numerical scheme about the discrete shock profile $\dsp$. This was the main subject of a previous paper \cite{Coeuret2024d} by the author. We will start by introducing the linearization $\LdspD$ about the discrete shock profile $\dspD$, discuss about the spectral properties of the operator $\Ldsp:=L^0$ and finally introduce the so-called \textit{spectral stability} assumption that is imposed on the point spectrum of the operator $\Ldsp$ (see Hypotheses \ref{H:spec} and \ref{H:Evans} below).
	
	First, for $\delta\in I$, we linearize the discrete evolution operator $\Nc$ about the discrete shock profile $\dspD$ and thus introduce the bounded \textit{linear} operator $\LdspD$ acting on $\ell^r(\Z)$ with $r\in[1,+\infty]$ defined by:
	\begin{equation}\label{def:linearizedScheme}
		\forall h\in \ell^r(\Z),\forall j\in \Z, \quad (\LdspD h)_j := \sum_{k=-p}^{q}a^\delta_{j,k}h_{j+k},
	\end{equation}
	where for $j\in\Z$ and $k\in\lbrace-p,\hdots,q-1\rbrace$, we first define the scalar:
	\begin{equation}\label{def:Bjk}
		b^\delta_{j,k}:=\displaystyle\nug \partial_{u_k}F\left(\nug;\dspD_{j-p},\hdots,\dspD_{j+q-1}\right)\in\C 
	\end{equation}
	and for $j\in\Z$ and $k\in\lbrace-p,\hdots,q\rbrace$:
	\begin{equation}\label{def:Ajk}
		a^\delta_{j,k}:=
		\left\{\begin{array}{cc}
			-b^\delta_{j+1,q-1} & \text{ if }k=q,\\
			b^\delta_{j,-p} & \text{ if }k=-p,\\
			\delta_{k,0} +b^\delta_{j,k} - b^\delta_{j+1,k-1} & \text{ otherwise,}
		\end{array}\right.
	\end{equation}
	and $\delta_k:=\left(\delta_{k,0}\right)$ is the Dirac mass sequence equal to $1$ at the index $k$ and $0$ elsewhere. 
	
	Though the linear operators $\LdspD$ will appear later on in the article, the one that will be more important to us is the operator $L^0$ which corresponds to the linearization of the numerical scheme about the discrete shock profile $\dsp:= \dsp^0$. Just like we used the notation $\dsp$ to denote the discrete shock profile $\overline{u}^0$, we introduce the notation: 
	\begin{equation}\label{def:Ldsp}
		\Ldsp:=L^0
	\end{equation}
	to describe the linearization of the numerical scheme about $\dsp$. As claimed previously, we need to study the spectrum of the operator $\Ldsp$ and also introduce the so-called \textit{spectral stability} assumption. This corresponds to one of the main parts of the article \cite{Coeuret2024d} and is heavily inspired by \cite{LafitteGodillon2001,Godillon2003,Serre2007}. We will now recall the main spectral results.	
	
	For $k\in\lbrace-p,\hdots,q-1\rbrace$, we define the scalars:
	\begin{equation}\label{def:Bk}
		b_k^\pm:=\displaystyle\nug \partial_{u_k}F\left(\nug;u^\pm,\hdots,u^\pm\right) \in\C
	\end{equation}
	and then for $k\in\lbrace-p,\hdots,q\rbrace$, we let:
	\begin{equation}\label{def:Ak}
		a_k^\pm:=
		\left\{\begin{array}{cc}
			-b_{q-1}^\pm & \text{ if }k=q,\\
			b_{-p}^\pm & \text{ if }k=-p,\\
			\delta_{k,0} +b_k^\pm - b_{k-1}^\pm & \text{ else.}
		\end{array}\right.
	\end{equation}
	We then define the meromorphic function $\Fc^\pm$ on $\C\backslash\lbrace0\rbrace$ by:
	\begin{equation}\label{def:Fc}
		\forall \kappa\in\C\backslash\lbrace0\rbrace, \quad \Fc^\pm(\kappa):=\sum_{k=-p}^q a_k^\pm\kappa^k\in\C.
	\end{equation}
	The definition \eqref{def:Ak} of the scalars $a_k^\pm$ and the consistency condition \eqref{cond:consistency} imply that
	\begin{equation}\label{eg:FcFin}
		\Fc^\pm(1)=1 \quad \text{and} \quad \alpha^\pm:=-{\Fc^\pm}^\prime(1)=\nug f^\prime(u^\pm)\neq 0.
	\end{equation}
	We additionally consider that the following assumption is verified, which corresponds to \cite[Hypothesis 5]{Coeuret2024d} and where the complex unit circle is denoted $\S^1$.
	\begin{hypo}\label{H:F}
		We have:
		$$\forall \kappa\in \S^1\backslash \lbrace1\rbrace, \quad |\Fc^\pm(\kappa)|<1.\quad \text{(Dissipativity condition)}$$
		Moreover, we suppose that there exists an integer $\mu\in \N\backslash\lbrace0\rbrace$ and a complex number $\beta^\pm$ with positive real part such that:
		\begin{equation}\label{F}
			\Fc^\pm(e^{i\xi})\underset{\xi\rightarrow0}= \exp(-i\alpha^\pm\xi - \beta^\pm \xi^{2\mu} + O(|\xi|^{2\mu+1})).\quad \text{(Diffusivity condition)}
		\end{equation}
	\end{hypo}
	
	This assumption is inspired by the fundamental contribution \cite{Thomee1965} which studies the $\ell^\infty$-stability\footnote{The $\ell^r$-stability defined here correponds to the power boundedness of the linearization about constant states when it acts on $\ell^r$.} of finite difference schemes for the transport equation. We first observe that the dissipativity condition implies, via the well-known Von Neumann condition, the $\ell^2$-stability of the linearization of the numerical scheme $\Nc$ about the constant states $u^\pm$. Adding the diffusivity condition \eqref{F} implies that the linearization about the constant states $u^\pm$ is also $\ell^r$-stable for all $r\in[1,+\infty]$. The numerical schemes that satisfy the diffusivity condition correspond to the numerical scheme which introduce some numerical viscosity, i.e. odd ordered schemes. Even ordered schemes, like the Lax-Wendroff scheme, do not satisfy this condition.
	
	One of the main result of \cite{Godillon2003,Serre2007,Coeuret2024d} is that the elements of the complex plane which belong to the unbounded connected component $\Oc$ of $\C\backslash\left(\Fc^+(\S^1)\cup\Fc^-(\S^1)\right)$ are either in the resolvent set of the operator $\Ldsp$ or are eigenvalues of $\Ldsp$ (see Figure \ref{fig:spec}). 
	
	\begin{figure}
		\begin{center}
			\begin{tikzpicture}[scale=2]
				\fill[color=gray!20] (-1.5,-1.5) -- (-1.5,1.5) -- (1.5,1.5) -- (1.5,-1.5) -- cycle;
				\fill[color=white] plot [samples = 100, domain=0:2*pi] ({1/4+3*cos(\x r)/4},{sin(\x r)/3}) -- cycle ;
				\fill[color=white] plot [samples = 100, domain=0:2*pi] ({1/2.5+1.5*cos(\x r)/2.5},{sin(\x r)/2}) -- cycle ;
				
				\draw (0,0) circle (1);
				
				\draw[thick,red] plot [samples = 100, domain=0:2*pi] ({(1/4+3*cos(\x r)/4},{sin(\x r)/3});
				\draw[thick,red] plot [samples = 100, domain=0:2*pi] ({1/2.5+1.5*cos(\x r)/2.5},{sin(\x r)/2});
				\draw[blue] (-0.7,0.2) node {$\bullet$};
				\draw[blue](0.5,0.6) node {$\bullet$};
				\draw[blue] (-0.1,-0.7) node {$\bullet$};
				
				\draw (45:1.2) node {$\S^1$};
				\draw[color=black!70] (-1,1.2) node {$\Oc$};
				\draw[red] (-0.2,0.55) node {$\Fc^\pm(\S^1)$};
				\draw[blue](1,0) node {$\bullet$} node[right] {$1$};
			\end{tikzpicture}
			\caption{In red, we have the curves $\Fc^\pm(\S^1)$. In gray, we represent the set $\Oc$ which corresponds to the unbounded component of $\C\backslash(\Fc^+(\S^1)\cup\Fc^-(\S^1))$. The elements of the set $\Oc$ are either eigenvalues of the operator $\Ldsp$ (represented in blue) or belong to the resolvent set $\rho(\Ldsp)$. We know that $1$ is an eigenvalue of $\Ldsp$. The first part of the spectral stability assumption described in Hypothesis \ref{H:spec} implies that the eigenvalues of $\Ldsp$ in $\Oc$ are located within the open unit disk.}
			\label{fig:spec}
		\end{center}
	\end{figure}
	
	We introduce the first part of the \textit{spectral stability} assumption, which corresponds to \cite[Hypothesis 6]{Coeuret2024d}.
	\begin{hypo}\label{H:spec}
		The operator $\Ldsp$ has no eigenvalue of modulus equal or larger than $1$ other than $1$.
	\end{hypo}
	Combining Hypothesis \ref{H:spec} with the statements above on the spectrum $\sigma(\Ldsp)$, we can then conclude that every complex scalar different than $1$ and of modulus equal or larger than $1$ is included in the resolvent set $\rho(\Ldsp)$. 
	
	The complex scalar $1$ plays a particular role in the spectral analysis of the operator $\Ldsp$. First, we have that $1$ belongs to the curves $\Fc^\pm(\S^1)$ and thus to the essential spectrum of the operator $\Ldsp$. However, the situation is even more complicated. In \cite{Godillon2003,Serre2007,Coeuret2024d}, we can find the construction of a so-called Evans function noted $\mathrm{Ev}$ in a neighborhood of $1$. This Evans function $\mathrm{Ev}$ is a holomorphic function that vanishes at eigenvalues of the operator $\Ldsp$, i.e. it acts like a characteristic polynomial. The articles  \cite{Godillon2003,Serre2007,Coeuret2024d} also prove that the Evans function $\mathrm{Ev}$ vanishes at $1$, i.e. $1$ is an eigenvalue of the operator $\Ldsp$. This is essentially the consequence of the existence of a continuous one-parameter family of SDSPs $(\dspD)_{\delta\in I}$ associated with $\dsp$.
	
	We now introduce the second part of the \textit{spectral stability} assumption, which is linked with the Evans function $\mathrm{Ev}$. It corresponds to \cite[Hypothesis 7]{Coeuret2024d}.
	\begin{hypo}\label{H:Evans}
		We have that $1$ is a simple zero of the Evans function $\mathrm{Ev}$, i.e.
		$$\mathrm{Ev}(1)=0 \quad \text{ and } \quad \frac{\partial \mathrm{Ev}}{\partial z}(1)\neq 0.$$
	\end{hypo}
	A consequence of Hypothesis \ref{H:Evans} proved in \cite{Coeuret2024d} is that the eigenspace of the operator $\Ldsp$ associated with the eigenvalue $1$ is of dimension $1$. More precisely, there exists a sequence $V\in \ell^2(\Z)\backslash\lbrace0\rbrace$ and two positive constants $C,c$ such that:
	\begin{equation}\label{decExpoV}
		\forall j\in \Z, \quad |V(j)|\leq Ce^{-c|j|}
	\end{equation}
	and:
	\begin{equation}\label{egV}
		\ker(Id_{\ell^2}-\Ldsp)=\mathrm{Span} V.
	\end{equation}
	This sequence $V$ will come back below in the description of the Green's function of the operator $\Ldsp$. Let us make an additional interesting observation. If we assume that the one-parameter family of SDSPs $\left(\dspD\right)_{\delta\in I}$ introduced in Hypothesis \ref{H:SDSP} is differentiable, then we claim that the sequence $\frac{\partial \dspD}{\partial\delta}\big|_{\delta=0}$ would belong to the eigenspace associated with the eigenvalue $1$ of the operator $\Ldsp$, which implies that the sequence $V$ would be collinear with the sequence $\frac{\partial \dspD}{\partial\delta}\big|_{\delta=0}$. This fact will be used later on at the end of Section \ref{subsec:Green} when discussing about the identification by mass of the discrete shock profiles $\dspD$ introduced in Hypothesis \ref{H:identification}.
	
	\vspace{0.1cm}\noindent\textbf{\underline{Additional technical assumptions}}\vspace{0.1cm}
	
	In the present article, the main result \cite[Thereom 1]{Coeuret2024d} which provides a precise description of the Green's function of the operator $\Ldsp$ will play a central role. Several of the hypotheses we introduced before on the discrete shock profile $\dsp$ are also part of the article \cite{Coeuret2024d}. However, there are a few additional technical assumptions introduced in \cite{Coeuret2024d} which will not be central in the understanding of the present article but are necessary to use \cite[Thereom 1]{Coeuret2024d}.  We will introduce them here without much context and invite the interested reader to take a look at \cite{Coeuret2024d} for more details on their role.
	
	\begin{itemize}
		\item \cite[Hypothesis 4]{Coeuret2024d} is always verified in the case of scalar conservation laws.
		
		\item We consider that the following assumption, which corresponds to \cite[Hypothesis 8]{Coeuret2024d}, is verified.
		\begin{hypo}\label{H:inv}
			The scalars $a^0_{j,-p}=b^0_{j,-p}$ and $a^0_{j,q}=-b^0_{j+1,q-1}$  for all $j\in\Z$ as well as the scalars $a_{-p}^\pm=b_{-p}^\pm$ and $a_q^\pm=-b_{q-1}^\pm$ are all different from zero.
		\end{hypo}
		
		\item We consider that the following assumption, which corresponds to \cite[Hypothesis 9]{Coeuret2024d}, is verified.
		\begin{hypo}\label{H:Mpm1}
			The equation 
			\begin{equation}\label{eg:Fcl}
				\Fc^\pm(\kappa)=1
			\end{equation}
			has $p+q$ distinct solutions $\kappa\in \C\backslash\lbrace0\rbrace$.
		\end{hypo}
	\end{itemize}
	
	\subsection{Green's function and linear stability estimates}\label{subsec:Green}
	
	Now that we have introduced the spectral stability assumption on the operator $\Ldsp$, our goal is to translate those spectral information into decay estimates the semigroup $(\Ldsp^n)_{n\in\N}$ as well as for other related families of operators. This will be achieved below in Proposition \ref{prop:Est}. Those estimates will be central in the proof of Theorem \ref{Th}. The proof of Proposition \ref{prop:Est} relies on a precise pointwise description of the Green's function associated with the operator $\Ldsp$.
	
	\vspace{0.1cm}\noindent\textbf{\underline{Definition of the Green's function of $\Ldsp$ and statement of \cite[Theorem 1]{Coeuret2024d}}}\vspace{0.1cm}
	
	For $j_0\in\Z$, we define the Green's function of the operator $\Ldsp$ recursively as:
	\begin{equation}\label{def:GreenTempo}
		\begin{array}{cc}
			&\Gcc(0,j_0,\cdot) : = \delta_{j_0},\\
			\forall n\in\N, & \Gcc(n+1,j_0,\cdot) : = \Ldsp  \Gcc(n,j_0,\cdot) ,
		\end{array}
	\end{equation}
	where the Dirac mass $\delta_{j_0}$ is a sequence defined by:
	$$\delta_{j_0}:=\left(\delta_{j_0,j}\right)_{j\in\Z}\in\ell^2(\Z).$$
	The main consequence of the introduction of the Green's function is that for all $h\in \ell^r(\Z)$ with $r\in[1,+\infty]$, we have:
	\begin{equation}\label{lienLccGreenV1}
		\forall n\in\N,\forall j\in\Z,\quad (\Ldsp^nh)_j = \sum_{j_0\in\Z}\Gcc(n,j_0,j)h_{j_0}.
	\end{equation}
	Thus, a precise description of the Green's function is sufficient to understand the action of the semigroup $(\Ldsp^n)_{n\in\N}$ associated with the operator $\Ldsp$.
	
	The following lemma proved via a simple recurrence is a direct consequence of the definition \eqref{def:GreenTempo} of the Green's function and the finite speed propagation of the linearized scheme.
	\begin{lemma}\label{lemGreenTempo}
		For $n\in \N$ and $j_0\in \Z$, the Green's function $\Gcc(n,j_0,\cdot)$ is finitely supported. More precisely, for $j\in\Z$, we have that:
		$$j-j_0\notin \lbrace-nq,\hdots,np\rbrace\quad \Rightarrow \quad\Gcc(n,j_0,j)=0.$$
	\end{lemma}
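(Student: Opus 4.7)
The statement is a finite-speed-of-propagation property, and the natural approach is a direct induction on $n$, exploiting the fact that the stencil of $\Ldsp$ has width $p+q$ (i.e., only reaches $p$ sites to the left and $q$ to the right, by definition \eqref{def:linearizedScheme}).

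First, I would verify the base case $n=0$: by \eqref{def:GreenTempo}, $\Gcc(0,j_0,j) = \delta_{j_0,j}$, so $\Gcc(0,j_0,j)\neq 0$ forces $j = j_0$, i.e. $j-j_0 = 0 \in \{0\} = \lbrace -0\cdot q,\hdots,0\cdot p \rbrace$, which is exactly the required inclusion.

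Next, for the inductive step, assume the property holds at rank $n$ and expand
$$\Gcc(n+1,j_0,j) \;=\; (\Ldsp \Gcc(n,j_0,\cdot))_j \;=\; \sum_{k=-p}^{q} a^0_{j,k}\, \Gcc(n,j_0,j+k).$$
If $\Gcc(n+1,j_0,j)\neq 0$, then at least one term on the right-hand side must be nonzero, so there exists $k\in\lbrace -p,\hdots,q\rbrace$ with $\Gcc(n,j_0,j+k)\neq 0$. By the induction hypothesis this forces $(j+k)-j_0 \in \lbrace -nq,\hdots,np\rbrace$, hence $j-j_0 \in \lbrace -nq-k,\hdots,np-k\rbrace$. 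Taking the union over $k\in\lbrace -p,\hdots,q\rbrace$ yields $j-j_0 \in \lbrace -(n+1)q,\hdots,(n+1)p\rbrace$, which is the contrapositive of the claim at rank $n+1$.

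There is no real obstacle here: the argument is a bookkeeping induction using only the stencil width of $\Ldsp$ and the recursive definition \eqref{def:GreenTempo}. No use of the coefficients $a^0_{j,k}$ beyond the fact that they vanish outside $k\in\lbrace -p,\hdots,q\rbrace$ is needed, so the spectral hypotheses and localization properties of $\dsp$ play no role in this lemma.
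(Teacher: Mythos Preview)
Your proof is correct and is exactly the ``simple recurrence'' the paper invokes: an induction on $n$ using only the stencil width of $\Ldsp$ in \eqref{def:linearizedScheme} and the recursive definition \eqref{def:GreenTempo}. The paper gives no further details, so your argument matches its approach precisely.
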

	The goal of \cite{Coeuret2024d} was to describe the pointwise asymptotic behavior of the Green's function $\Gcc(n,j_0,j)$ when $j-j_0\in\lbrace-nq,\hdots,np\rbrace$. To present \cite[Theorem 1]{Coeuret2024d}, for $\beta \in \C$ with positive real part, we define the functions $H_{2\mu}(\beta;\cdot),E_{2\mu}(\beta;\cdot): \R\rightarrow \C$ by:
	\begin{align}\label{def:H2mu_et_E2mu}
		\begin{split}
			\forall x \in \R,\quad &H_{2\mu}(\beta;x) := \frac{1}{2\pi} \int_\R e^{ixu}e^{-\beta u^{2\mu}}du,\\
			\forall x \in \R,\quad &E_{2\mu}(\beta;x) := \int_x^{+\infty} H_{2\mu}(\beta;y)dy,
		\end{split}
	\end{align}
	where we recall that the integer $\mu$ is defined in Hypothesis \ref{H:F}. Let us point out that Lemma \ref{lemHE} below implies that the function $E_{2\mu}$ is well-defined. We call the functions $H_{2\mu}$ generalized Gaussians and the functions $E_{2\mu}$ generalized Gaussian error functions since for $\mu=1$, we have 
	$$\forall x \in \R,\quad H_{2}(\beta;x)=\frac{1}{\sqrt{4\pi\beta}}e^{-\frac{x^2}{4\beta}}.$$
	Noticing that the function $H_{2\mu}$ is an even function and that it is the inverse Fourier transform of $u\mapsto e^{-\beta u^{2\mu}}$, we observe that:
	\begin{subequations}
		\begin{align}
			\lim_{x\rightarrow -\infty}E_{2\mu}(\beta;x)=\int_{-\infty}^{+\infty} H_{2\mu}(\beta;y)dy= 1 \label{eq:E_en_-infty}\\
			\forall x\in \R, \quad E_{2\mu}(\beta,-x)= 1-E_{2\mu}(\beta,x).
		\end{align}
	\end{subequations}
	
	The following lemma introduces some useful inequalities on the functions $H_{2\mu}$ and $E_{2\mu}$ defined by \eqref{def:H2mu_et_E2mu}. We refer to \cite[Lemma 1.2]{Coeuret2024d} for the proof.
	\begin{lemma}\label{lemHE}
		Let us consider a compact subset $A$ of $\lbrace z\in\C,\Re(z)>0\rbrace$ and integers $\mu,m\in\N\backslash\lbrace0\rbrace$. There exist two positive constants $C,c$ such that for all $\beta\in A$:
		\begin{subequations}
			\begin{align}
				\forall x\in\R,\quad & |\partial_x^mH_{2\mu}(\beta;x)|\leq C\exp(-c|x|^\frac{2\mu}{2\mu-1}),\label{inH}\\
				\forall x\in]0,+\infty[,\quad & |E_{2\mu}(\beta;x)|\leq C\exp(-c|x|^\frac{2\mu}{2\mu-1}),\label{inE+}\\
				\forall x\in]-\infty,0[, \quad& |1-E_{2\mu}(\beta;x)|\leq C\exp(-c|x|^\frac{2\mu}{2\mu-1}).\label{inE-}
			\end{align}
		\end{subequations}
	\end{lemma}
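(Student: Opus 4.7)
The strategy is a classical saddle-point analysis: all three estimates follow from a sharp pointwise bound on $H_{2\mu}$ (and its derivatives) obtained by deforming the contour of integration in the complex plane. Writing
\[
\partial_x^m H_{2\mu}(\beta;x)=\frac{1}{2\pi}\int_\R (iu)^m e^{ixu}e^{-\beta u^{2\mu}}du,
\]
the integrand is entire in $u$, so for $x\neq 0$ I would shift the contour from $\R$ to $\R-i\eta\,\mathrm{sgn}(x)$, with $\eta:=c_0|x|^{1/(2\mu-1)}$ for a small constant $c_0>0$ to be fixed later. The resulting factor $|e^{ixu}|=e^{-|x|\eta}$ produces exactly the target decay $e^{-c_0|x|^{2\mu/(2\mu-1)}}$, provided the remaining integral is controlled by $e^{O(\eta^{2\mu})}$ uniformly in $\beta\in A$.

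The core technical step is then a lower bound of the form $\Re(\beta u^{2\mu})\geq cv^{2\mu}-C\eta^{2\mu}$ on the shifted contour $u=v-i\eta\,\mathrm{sgn}(x)$, uniformly for $\beta\in A$. Writing $u=re^{-i\phi\,\mathrm{sgn}(x)}$ with $r=\sqrt{v^2+\eta^2}$ and $\phi=\arctan(\eta/|v|)\in[0,\pi/2)$, one has $\Re(\beta u^{2\mu})=r^{2\mu}\Re(\beta e^{-2i\mu\phi\,\mathrm{sgn}(x)})$. Since $A$ is compact inside $\lbrace\Re z>0\rbrace$, $|\arg\beta|$ is bounded away from $\pi/2$ uniformly in $\beta\in A$; hence there exists $\theta_0\in(0,\pi/(4\mu))$ such that $\Re(\beta e^{-2i\mu\phi\,\mathrm{sgn}(x)})\geq c$ whenever $\phi\leq\theta_0$. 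I would split the $v$-integration at $|v|=\eta/\tan\theta_0$: on $|v|\geq\eta/\tan\theta_0$ the integrand is bounded by $r^me^{-cr^{2\mu}}\lesssim |v|^m e^{-cv^{2\mu}}$, contributing a uniformly bounded amount; on $|v|\leq\eta/\tan\theta_0$ the integrand is crudely bounded by $(v^2+\eta^2)^{m/2}e^{C\eta^{2\mu}}$ on an interval of length $O(\eta)$, contributing at most $C\eta^{m+1}e^{C\eta^{2\mu}}$. Substituting $\eta=c_0|x|^{1/(2\mu-1)}$, the exponent becomes $(-c_0+Cc_0^{2\mu})|x|^{2\mu/(2\mu-1)}$, strictly negative for $c_0$ small, and the polynomial prefactors are absorbed into the resulting exponential. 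For $|x|\leq 1$ the bound \eqref{inH} is trivial by direct estimation.

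The inequalities \eqref{inE+} and \eqref{inE-} follow by integration: the argument above also handles $m=0$ and yields $|H_{2\mu}(\beta;y)|\leq Ce^{-cy^{2\mu/(2\mu-1)}}$ for all $y\in\R$ and $\beta\in A$. For $x>0$, \eqref{inE+} reduces to bounding $\int_x^{+\infty}e^{-cy^{2\mu/(2\mu-1)}}dy$; using $y^{2\mu/(2\mu-1)}\geq \tfrac{1}{2}x^{2\mu/(2\mu-1)}+\tfrac{1}{2}y^{2\mu/(2\mu-1)}$ whenever $y\geq x\geq 0$, the integral is majorized by $e^{-\frac{c}{2}x^{2\mu/(2\mu-1)}}\int_0^{+\infty}e^{-\frac{c}{2}y^{2\mu/(2\mu-1)}}dy$, which gives the desired decay. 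The bound \eqref{inE-} then follows symmetrically from the identity $1-E_{2\mu}(\beta;x)=\int_{-\infty}^x H_{2\mu}(\beta;y)dy$, itself a consequence of \eqref{eq:E_en_-infty}.

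The main obstacle is securing a uniform choice of contour parameters valid for all $\beta\in A$; this is precisely where compactness of $A$ inside the open right half-plane is essential, since it provides the uniform angular bound $|\arg\beta|\leq \pi/2-2\mu\theta_0$. A subtler accounting point is tracking the polynomial prefactor $(iu)^m$ on the shifted contour without spoiling the exponential decay; because the exponential gain is super-polynomial in $|x|$, it is readily absorbed into the constants $C,c$, but one must verify that this absorption does not interact with the uniformity in $\beta$.
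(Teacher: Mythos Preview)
Your contour-deformation argument is correct and is the standard route to such generalized Gaussian bounds. The paper does not actually prove this lemma: it simply refers to \cite[Lemma 1.2]{Coeuret2023} for the proof, so there is no ``paper's own proof'' to compare against beyond that citation. Your approach---shifting the contour by $\eta=c_0|x|^{1/(2\mu-1)}$, splitting according to whether $|\arg u|$ stays in the good cone, and then integrating the resulting pointwise bound on $H_{2\mu}$ to get \eqref{inE+}--\eqref{inE-}---is exactly the classical method and would be the expected content of the cited proof.

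One minor presentational point: your polar parametrization $u=re^{-i\phi\,\mathrm{sgn}(x)}$ with $\phi=\arctan(\eta/|v|)$ is only literally correct for $v>0$; for $v<0$ the argument of $u$ lies near $\pm\pi$ rather than near $0$. This does not affect the argument, since $u^{2\mu}=(-u)^{2\mu}$ and the computation of $\arg(u^{2\mu})$ modulo $2\pi$ still lands in $[-2\mu\theta_0,2\mu\theta_0]$ on the region $|v|\geq\eta/\tan\theta_0$, but you may want to say this explicitly rather than leave it implicit in the parametrization.
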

	
	Hypotheses \ref{H:SDSP}-\ref{H:Mpm1} (though the verification of Hypothesis \ref{H:identification} is not necessary) allow us to use the results of \cite{Coeuret2024d}. First, the \textit{Liu-Majda condition} \cite[Lemma 4.6]{Coeuret2024d} implies in the scalar case that:
	\begin{equation}\label{Liu-Majda_condition}
		\sum_{j\in\Z}V(j)\neq0
	\end{equation}
	where the sequence $V$ is defined by \eqref{egV}. Furthermore, \cite[Theorem 1]{Coeuret2024d}  states that there exists some small constant $c>0$ such that for $j_0\in\N$, $n\in\N\backslash\lbrace0\rbrace$ and $j\in\Z$ verifying $j-j_0\in\lbrace-nq,\hdots,np\rbrace$, we have:
	\begin{subequations}
		\begin{multline}\label{decompoGreen}
			\Gcc(n,j_0,j) = C_EE_{2\mu}\left(\beta^+;\frac{n\alpha^+ +j_0}{n^\frac{1}{2\mu}}\right)V(j)+\ind_{j\geq0}O\left(\frac{1}{n^\frac{1}{2\mu}}\exp\left(-c\left(\frac{\left|n-\left(\frac{j-j_0}{\alpha^+}\right)\right|}{n^\frac{1}{2\mu}}\right)^\frac{2\mu}{2\mu-1}\right)\right)\\+O\left(\frac{e^{-c|j|}}{n^\frac{1}{2\mu}}\exp\left(-c\left(\frac{\left|n+\frac{j_0}{\alpha^+}\right|}{n^\frac{1}{2\mu}}\right)^\frac{2\mu}{2\mu-1}\right)\right)+O(e^{-cn})
		\end{multline}
		and for $j_0\in \N\backslash\left\{0\right\}$, $n\in\N\backslash\lbrace0\rbrace$ and $j\in\Z$ verifying $j-j_0\in\lbrace-nq-1,\hdots,np\rbrace$, we have:
		\begin{multline}\label{decompoDerGreen}
			\Gcc(n,j_0,j) - \Gcc(n,j_0-1,j) \\=\ind_{j\geq0}\exp\left(-c\left(\frac{\left|n-\left(\frac{j-j_0}{\alpha^+}\right)\right|}{n^\frac{1}{2\mu}}\right)^\frac{2\mu}{2\mu-1}\right)\left(O\left(\frac{e^{-c|j|}}{n^\frac{1}{2\mu}}\right)+O\left(\frac{e^{-c|j_0|}}{n^\frac{1}{2\mu}}\right)+O\left(\frac{1}{n^\frac{1}{\mu}}\right)\right)\\+O\left(\frac{e^{-c|j|}}{n^\frac{1}{2\mu}}\exp\left(-c\left(\frac{\left|n+\frac{j_0}{\alpha^+}\right|}{n^\frac{1}{2\mu}}\right)^\frac{2\mu}{2\mu-1}\right)\right)+O(e^{-cn}).
		\end{multline}
	\end{subequations}
	where the constant $C_E$ is defined by\footnote{We clarify that this expression of the constant $C_E$ is an immediate consequence of the analysis of the mass of the Green's function \cite[(5.39)]{Coeuret2024d} in the scalar case.}:
	\begin{equation*}
		C_E=\frac{1}{\sum_{j\in\Z} V(j)}
	\end{equation*}
	and we recall that we introduced the usual Landau notation $O(\cdot)$. These pointwise descriptions of the Green's function and its discrete derivative have equivalent expressions when the location $j_0$ of the initial Dirac mass for the Green's function is negative.
	
	Let give a brief description on the behavior of the Green's function $\Gcc(n,j_0,j)$ for $j_0$ positive based on the expression \eqref{decompoGreen}. A similar description holds for the Green's function when $j_0$ is negative, as well as for the discrete derivative $\Gcc(n,j_0,j)-\Gcc(n,j_0-1,j)$ of the Green's function using \eqref{decompoDerGreen}. We also refer the interested reader to \cite{Coeuret2024d} for more details and numerical representations of the Green's function.
	
	For $j_0\in \N$, we recall that the definition of the Green's function \eqref{def:GreenTempo} implies that at the initial time $n=0$, the Green's function is just a Dirac mass localized at $j_0$, essentially on the right-side of the shock, assuming that the shock is located $0$. The expression \eqref{decompoGreen} describing the long-time behavior of the Green's function is composed of four terms, the last one being just a small remainder. 
	\begin{itemize}
		\item The second term
		$$\ind_{j\geq0}O\left(\frac{1}{n^\frac{1}{2\mu}}\exp\left(-c\left(\frac{\left|n-\left(\frac{j-j_0}{\alpha^+}\right)\right|}{n^\frac{1}{2\mu}}\right)^\frac{2\mu}{2\mu-1}\right)\right)$$
		describes a generalized Gaussian wave originating from $j_0$ and traveling at the speed $\alpha^+:=\nug f^\prime(u^\pm)<0$. It moves along the characteristic of the right state $u^+$, towards the shock, and disappears when it reaches it.
		\item The first term
		$$C_EE_{2\mu}\left(\beta^+;\frac{n\alpha^+ +j_0}{n^\frac{1}{2\mu}}\right)V(j)$$
		describes the progressive activation of the exponential profile $V$ that spans the eigenspace $\ker(Id_{\ell^2}-\Ldsp)$. The activation of the profile $V$ happens when the generalized Gaussian wave discussed above reaches the shock.
		\item Finally, the third term
		$$O\left(\frac{e^{-c|j|}}{n^\frac{1}{2\mu}}\exp\left(-c\left(\frac{\left|n+\frac{j_0}{\alpha^+}\right|}{n^\frac{1}{2\mu}}\right)^\frac{2\mu}{2\mu-1}\right)\right)$$
		is a remainder term which describes an additional exponential profile that is activated when the generalized Gaussian wave reaches the shock and then disappears.
	\end{itemize} 
	
	\vspace{0.1cm}\noindent\textbf{\underline{Decay estimates for the families of operators $\left(\Ldsp^n\right)_{n\in\N}$, $\left(\Ldsp^n(Id-\Tc)\right)_{n\in\N}$ and $\left(\Ldsp^n(\LdspD-\Ldsp)\right)_{n\in\N}$}}\vspace{0.1cm}
	
	The decompositions \eqref{decompoGreen} and \eqref{decompoDerGreen} are fundamental to prove the following proposition which states sharp bounds on the families of operators $\left(\Ldsp^n\right)_{n\in\N}$, $\left(\Ldsp^n(Id-\Tc)\right)_{n\in\N}$ as well as $\left(\Ldsp^n(\LdspD-\Ldsp)\right)_{n\in\N}$ acting on polynomially weighted $\ell^r$-spaces, where the shift operator $\Tc$ is defined by:
	\begin{equation}\label{def:Tc}
		\Tc: (h_j)_{j\in\Z}\in\C^\Z\mapsto (h_{j+1})_{j\in\Z}\in\C^\Z
	\end{equation}
	and the operator $\LdspD$ defined by \eqref{def:linearizedScheme} corresponds to the linearized operator of the numerical scheme about the discrete shock profile $\dspD$.
	
	For $\gamma\in[0,+\infty[$, we introduce the polynomial-weighted spaces $\ell^r_\gamma$ and their norms, as well as the space $\E_\gamma$ of zero-mass elements of $\ell^1_\gamma$ :
	\begin{subequations}\label{def:EspAPoids}
		\begin{align}
			\forall r\in[1,+\infty],\quad& \ell^r_\gamma:= \lbrace (h_j)_{j\in\Z}\in\C^\Z,\quad \left((1+|j|)^\gamma h_j\right)_{j\in\Z}\in\ell^r(\Z)\rbrace, \\
			\forall r\in[1,+\infty],\forall h \in \ell^r_\gamma,\quad& \left\|h\right\|_{\ell_\gamma^r} := \left\|\left((1+|j|)^\gamma h_j\right)_{j\in\Z}\right\|_{\ell^r},\\
			&\E_\gamma := \left\{ h\in \ell^1_\gamma ,\quad \sum_{j\in\Z}h_j=0\right\}.\label{def:Elamb}
		\end{align}
	\end{subequations}
	
	\begin{prop}\label{prop:Est}
		For any $0\leq \gamma\leq \Gamma$, there exists a constant $C_\Lcc(\gamma,\Gamma)>0$ such that we have the following estimates on the semigroup $(\Ldsp^n)_{n\in\N}$:
		\begin{subequations}\label{prop:Est:all}
			\begin{align}
				\forall n\in\N,\forall h\in \E_{\Gamma}, \quad& \left\|\Ldsp^nh\right\|_{\ell^1_{\gamma}}\leq \frac{C_\Lcc(\gamma,\Gamma)}{(n+1)^{\Gamma-\gamma}} \left\|h\right\|_{\ell^1_{\Gamma}},\label{prop:Est:L^n:1}\\
				\forall n\in\N,\forall h\in \E_{\Gamma}, \quad& \left\|\Ldsp^nh\right\|_{\ell^\infty_{\gamma}}\leq \frac{C_\Lcc(\gamma,\Gamma)}{(n+1)^{\Gamma-\gamma+\min\left(\gamma,\frac{1}{2\mu}\right)}} \left\|h\right\|_{\ell^1_{\Gamma}},\label{prop:Est:L^n:infty}
			\end{align}
			and the following estimates on the family of operators $(\Ldsp^n(Id-\Tc))_{n\in\N}$:
			\begin{align}
				\forall n\in\N,\forall h\in \ell^1_{\Gamma}, \quad& \left\|\Ldsp^n(Id-\Tc)h\right\|_{\ell^1_{\gamma}}\leq \frac{C_\Lcc(\gamma,\Gamma)}{(n+1)^{\Gamma-\gamma+\frac{1}{2\mu}}} \left\|h\right\|_{\ell^1_{\Gamma}},\label{prop:Est:L^n(id-T):1,1}\\
				\forall n\in\N,\forall h\in \ell^1_{\Gamma}, \quad& \left\|\Ldsp^n(Id-\Tc)h\right\|_{\ell^\infty_{\gamma}}\leq \frac{C_\Lcc(\gamma,\Gamma)}{(n+1)^{\Gamma-\gamma+\frac{1}{2\mu}+\min\left(\gamma,\frac{1}{2\mu}\right)}}\left\|h\right\|_{\ell^1_{\Gamma}},\label{prop:Est:L^n(id-T):infty,1}\\
				\forall n\in\N,\forall h\in \ell^\infty_{\Gamma}, \quad& \left\|\Ldsp^n(Id-\Tc)h\right\|_{\ell^\infty_{\gamma}}\leq \frac{C_\Lcc(\gamma,\Gamma)}{(n+1)^{\Gamma-\gamma+\min\left(\gamma,\frac{1}{2\mu}\right)}} \left\|h\right\|_{\ell^\infty_{\Gamma}},\label{prop:Est:L^n(id-T):infty,infty}
			\end{align}
			Furthermore, for any $\gamma,p\in[0,+\infty[$, there exists a constant $C_L(\gamma,p)$ such that we have the following estimates on the family of operators $\left(\Ldsp^n\left(\LdspD-\Ldsp\right)\right)_{n\in\N}$:
			\begin{align}
				\forall \delta \in I,\forall n\in\N,\forall h\in \ell^\infty(\Z), \quad& \left\|\Ldsp^n(\LdspD-\Ldsp)h\right\|_{\ell^1_{\gamma}}\leq \frac{C_L(\gamma,p)|\delta|}{(n+1)^{p}} \left\|h\right\|_{\ell^\infty},\label{prop:Est:L^n(LD-L)}
			\end{align}
		\end{subequations}
	\end{prop}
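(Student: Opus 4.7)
The proof rests on the kernel representations afforded by Proposition \ref{Prop:DecompoGreen} through the refined decompositions \eqref{decompoGreen}--\eqref{decompoDerGreen}. For any $h$ one has $(\Ldsp^n h)_j = \sum_{j_0}\Gcc(n,j_0,j)\,h_{j_0}$, and a direct shift in the second sum yields
\begin{equation*}
(\Ldsp^n (Id-\Tc) h)_j = \sum_{j_0\in\Z}\bigl(\Gcc(n,j_0,j)-\Gcc(n,j_0-1,j)\bigr)\,h_{j_0},
\end{equation*}
so that estimates \eqref{prop:Est:L^n(id-T):1,1}--\eqref{prop:Est:L^n(id-T):infty,infty} are governed by the \emph{discrete-derivative} kernel \eqref{decompoDerGreen}, in which the large profile term $C_E E_{2\mu}(\cdots)V(j)$ of \eqref{decompoGreen} has already disappeared. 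This is the mechanism behind the extra $(n+1)^{-1/(2\mu)}$ gain visible in the $(Id-\Tc)$ estimates.

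For zero-mass data $h\in\E_\Gamma$, I would introduce the primitive $H_{j_0}:=\sum_{k\leq j_0}h_k$, which tends to $0$ at both infinities thanks to $\sum_k h_k=0$ and satisfies the sharp tail bound
\begin{equation*}
|H_{j_0}|\leq (1+|j_0|)^{-\Gamma}\,\|h\|_{\ell^1_\Gamma},
\end{equation*}
obtained by writing $H_{j_0}=-\sum_{k>j_0}h_k$ for $j_0\geq 0$ and symmetrically for $j_0<0$. An Abel summation (legitimate by the finite support of $\Gcc(n,j_0,\cdot)$ from Lemma \ref{lemGreenTempo}) then gives
\begin{equation*}
(\Ldsp^n h)_j = -\sum_{j_0\in\Z}\bigl(\Gcc(n,j_0+1,j)-\Gcc(n,j_0,j)\bigr)\,H_{j_0},
\end{equation*}
again reducing \eqref{prop:Est:L^n:1}--\eqref{prop:Est:L^n:infty} to the kernel \eqref{decompoDerGreen}, now weighted by $(1+|j_0|)^{-\Gamma}$. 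Each remainder in \eqref{decompoDerGreen} is either a generalized-Gaussian term of width $n^{1/(2\mu)}$ centred at $j-j_0\approx n\alpha^+$ and amplitude $n^{-1/(2\mu)}$ (possibly multiplied by $e^{-c|j|}$, $e^{-c|j_0|}$ or $n^{-1/\mu}$), an exponentially localized factor, or the pure $O(e^{-cn})$ remainder. The strategy is to distribute the output weight via $(1+|j|)^\gamma\lesssim (1+|j_0|)^\gamma+(1+|j-j_0|)^\gamma$, control the corresponding sum with Lemma \ref{lemHE}, and absorb $(1+|j-j_0|)^\gamma\lesssim (1+n)^\gamma$ on the Gaussian support; evaluating the primitive weight at $j_0\approx j-n\alpha^+$ then produces the polynomial rate $(n+1)^{-(\Gamma-\gamma)}$ for \eqref{prop:Est:L^n:1}, and the derivative kernel supplies the additional $(n+1)^{-1/(2\mu)}$ for \eqref{prop:Est:L^n(id-T):1,1}. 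For the $\ell^\infty$-estimates the Gaussian contributes its peak height $n^{-1/(2\mu)}$ directly, but only up to the room $\gamma$ left by the output weight, yielding the sharp factor $(n+1)^{-\min(\gamma,1/(2\mu))}$.

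For the perturbation family, I would start from the definitions \eqref{def:Bjk}--\eqref{def:Ajk} and observe, by the conservative form of $a^\delta_{j,k}$ in $k$ and by \eqref{DSP_CV_ExpoDelta} together with the smoothness of $F$, that $(\LdspD-\Ldsp)h$ is pointwise of size $O(|\delta|\,e^{-c|j|}\,\|h\|_{\ell^\infty})$ and is moreover a discrete $j$-derivative of an exponentially localized sequence. Consequently $(\LdspD-\Ldsp)h\in\E_\Gamma$ for every $\Gamma\geq 0$ and every $h\in\ell^\infty(\Z)$, with
\begin{equation*}
\bigl\|(\LdspD-\Ldsp)h\bigr\|_{\ell^1_\Gamma}\leq C(\Gamma)\,|\delta|\,\|h\|_{\ell^\infty}.
\end{equation*}
The estimate \eqref{prop:Est:L^n(LD-L)} then follows by invoking \eqref{prop:Est:L^n:1} with $\Gamma:=\gamma+p$.

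The main obstacle is the weighted kernel analysis: turning the pointwise Green's-function description into $\ell^1$ and $\ell^\infty$ bounds with the claimed \emph{sharp} powers of $n+1$, especially the $\min(\gamma,1/(2\mu))$ exponent in \eqref{prop:Est:L^n:infty} and \eqref{prop:Est:L^n(id-T):infty,1}--\eqref{prop:Est:L^n(id-T):infty,infty}. Each of the several remainder terms in \eqref{decompoDerGreen} must be handled separately, with the output weight $(1+|j|)^\gamma$ dispatched either to the source side $(1+|j_0|)^\gamma$ or to the transport factor $(1+|j-j_0|)^\gamma$ in a way that matches the natural Gaussian scaling and does not spoil the exponential localization carried by $e^{-c|j|}$ and $e^{-c|j_0|}$; a symmetric treatment is also needed for the negative-$j_0$ branch of the Green's function which is not written out in \eqref{decompoGreen}--\eqref{decompoDerGreen}.
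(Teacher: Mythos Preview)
Your treatment of the families $\Ldsp^n(Id-\Tc)$ and $\Ldsp^n(\LdspD-\Ldsp)$ coincides with the paper's and is correct. The divergence---and the gap---lies in how you handle $\Ldsp^n h$ for zero-mass data $h\in\E_\Gamma$.

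The paper does \emph{not} Abel-sum the full Green's function. It works with the non-derivative decomposition \eqref{decompoGreen} directly, splitting $(\Ldsp^n h)_j$ into a Gaussian-wave piece $T_D^\pm$, an activation remainder $T_R^\pm$, the profile term $T_E(h,n,j)=\bigl(\sum_{j_0}E^\pm(n,j_0)h_{j_0}\bigr)V(j)$, and an $O(e^{-cn})$ tail. The zero-mass hypothesis is spent \emph{only} on $T_E$: one subtracts $C_E\sum_{j_0}h_{j_0}=0$ for free and is left with $\sum_{j_0}\bigl(E_{2\mu}(\cdots)-1\bigr)h_{j_0}$, which is then controlled via the Gaussian-tail bound \eqref{inE-}. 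The remaining pieces $T_D^\pm$ and $T_R^\pm$ are bounded directly against $h\in\ell^1_\Gamma$, with no summation by parts.

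Your global Abel summation trades $h\in\ell^1_\Gamma$ for the primitive $H$, which lies only in $\ell^\infty_\Gamma$, and then asks the derivative kernel \eqref{decompoDerGreen} to map $\ell^\infty_\Gamma\to\ell^1_\gamma$. The $O(n^{-1/\mu})$-Gaussian piece of that kernel refuses to do so at the required rate: carrying out your recipe, its contribution to $\|\Ldsp^n h\|_{\ell^1_\gamma}$ is essentially
\[
n^{-\frac{1}{2\mu}}\sum_{j\geq 0}(1+j)^{\gamma}\,(1+j+n|\alpha^+|)^{-\Gamma}\,\|h\|_{\ell^1_\Gamma},
\]
and this $j$-sum diverges when $\Gamma-\gamma\leq 1$ and yields only $n^{-(\Gamma-\gamma-1+1/(2\mu))}$ when $\Gamma-\gamma>1$, strictly worse than the claimed $n^{-(\Gamma-\gamma)}$ by a factor $n^{1-1/(2\mu)}$. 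The loss is exactly the price of replacing an $\ell^1$ source by an $\ell^\infty$ one \emph{before} performing the output summation in $j$. (For the $\ell^\infty_\gamma$ estimate \eqref{prop:Est:L^n:infty} no $j$-summation occurs and there your argument does go through.) The remedy is to localize the use of zero mass to the single profile term and keep the incoming Gaussian wave on the $\ell^1$ side, which is precisely the paper's strategy.
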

	
	Proposition \ref{prop:Est} can be seen as an improvement of the linear stability result \cite[Theorem 2]{Coeuret2024d} and will play a central role on proving the main result of this paper, that is Theorem \ref{Th}.
	
	\vspace{0.1cm}\noindent\textbf{\underline{A slight detour: On the mass function $M$ and the identifiction by mass of SDSPs}}\vspace{0.1cm}
	
	 We allow ourselves a slight detour in the content of the paper to discuss on the identification by mass of spectrally stable SDSPs and the verification of Hypothesis \ref{H:identification}. Indeed, as hinted when we introduced Hypothesis \ref{H:identification}, we claim that under regularity assumptions on the family of discrete shock profiles $\left(\dspD\right)_{\delta\in I}$, one could prove the injectivity of the mass function $M$ defined by \eqref{def:M} as well as the verification of the inequality \eqref{in:M} using the \textit{Liu-Majda condition} \eqref{Liu-Majda_condition}. 
	
	Indeed, let us consider that Hypotheses \ref{H:SDSP}-\ref{H:Mpm1} \textit{except} Hypothesis \ref{H:identification} are verified. Let us furthermore assume that, in Hypothesis \ref{H:SDSP}, the family of SDSPs $(\dsp^\delta)_{\delta\in I}$ is differentiable and that the \textit{mass} function $M$ defined by \eqref{def:M} is of class $\Cc^1$. We point out that this additional hypothesis of regularity on the family of discrete shock profiles $\left(\dspD\right)_{\delta\in I}$ seems fairly legitimate in practice, but known existence results on discrete discrete shock profiles do not yet allow to prove it. As we explained under \eqref{egV}, the sequence $V$ defined by \eqref{egV} is collinear to the sequence $\frac{\partial \dspD}{\partial\delta}\big|_{\delta=0}$. Thus, the \textit{Liu-Majda condition} \eqref{Liu-Majda_condition} implies that:
	$$\sum_{j\in\Z} \left(\frac{\partial \dspD}{\partial\delta}\Big|_{\delta=0}\right)_j\neq 0$$
	and thus 
	$$M^\prime(0)\neq 0.$$
	This would allow us to conclude on the verification of Hypothesis \ref{H:identification}.
	
	\subsection{Main result of the article: nonlinear orbital stability in the scalar case}\label{subsec:MainRes}
	
	Before stating Theorem \ref{Th}, we need to introduce some particular conditions. First, we claim that the triplet of positive constants $(a,b,c)\in[0,+\infty[^3$ satisfies the condition \eqref{cond:H} when:
	\begin{align}\label{cond:H}\tag{H}
		\begin{split}
			1-a\leq b-c& \quad \text{ if } a\in[0,1[,\\ 
			0 < b-c & \quad \text{ if } a=1,\\
			0 \leq b-c & \quad \text{ if } a>1.
		\end{split}
	\end{align}
	The condition \eqref{cond:H} appears in a technical lemma below (Lemma \ref{lem:InSum}). Using the newly introduced condition \eqref{cond:H}, we introduce for quadruplets of constants $(\gamma_1,\gamma_\infty,\pg_1,\pg_\infty)\in[0,+\infty[^4$ the following conditions \ref{Th:C1}-\ref{Th:C4} where the constant $\mu$ is defined in Hypothesis \ref{H:F}:
	\begin{enumerate}[label=(C\arabic*)]
		\item \label{Th:C1} The triplet $\left(\pg_1+\pg_\infty,\, \gamma_\infty+\frac{1}{2\mu},\, \pg_1\right)$ satisfies the condition \eqref{cond:H}.
		\item \label{Th:C2} The triplet $\left(\gamma_\infty+\frac{1}{2\mu},\, \pg_1+\pg_\infty,\, \pg_1\right)$ satisfies the condition \eqref{cond:H}.
		\item \label{Th:C3} At least one of the two triplets:
		$$\left(\pg_1+\pg_\infty,\, \gamma_1+\frac{1}{2\mu}+\min\left(\gamma_\infty,\, \frac{1}{2\mu}\right),\, \pg_\infty\right) \text{ or }\left(2\pg_\infty,\, \gamma_\infty+\min\left(\gamma_\infty,\, \frac{1}{2\mu}\right),\, \pg_\infty\right) $$ 
		verifies the condition \eqref{cond:H}.
		\item \label{Th:C4} At least one of the two triplets:
		$$\left(\gamma_1+\frac{1}{2\mu}+\min\left(\gamma_\infty,\, \frac{1}{2\mu}\right),\, \pg_1+\pg_\infty,\, \pg_\infty\right)\text{ or }\left(\gamma_\infty+\min\left(\gamma_\infty,\, \frac{1}{2\mu}\right),\, 2\pg_\infty,\, \pg_\infty\right) $$
		verifies the condition \eqref{cond:H}.
	\end{enumerate}
		
	We will now state the main result of this article. We recall that the polynomially-weighted spaces $\ell^r_\gamma$ and their norms appearing in Theorem \ref{Th} are defined by \eqref{def:EspAPoids}. 
	
	\begin{theorem}\label{Th}
		We assume that Hypotheses \ref{H:SDSP}-\ref{H:Mpm1} are verified and we consider four constants $(\gamma_1,\gamma_\infty,\pg_1,\pg_\infty)\in[0,+\infty[^4$ that satisfy the conditions \ref{Th:C1}-\ref{Th:C4}. We define the constant: 
		\begin{equation}\label{def:Gamma}
			\Gamma:= \max\left(\pg_1+\gamma_1, \, \pg_\infty+\gamma_\infty-\frac{1}{2\mu},\, \pg_\infty,\, \gamma_\infty\right).
		\end{equation}
		Then, there exist two constants $\varepsilon,C_0\in[0,+\infty[$ such that, for any initial perturbation $\hg\in\ell^1_{\Gamma}$ verifying:
		$$\left\|\hg\right\|_{\ell^1_{\Gamma}}<\varepsilon $$
		the following assertions are verified:
		\begin{itemize}
			\item There exists a unique constant $\delta \in I$ such that:
			\begin{equation}\label{Th:IDdelta}
				\sum_{j\in\Z}\hg_j = M(\delta)
			\end{equation}
			where the mass function $M$ is defined by \eqref{def:M}.
			\item The solution $(u^n)_{n\in\N}$ of the numerical scheme \eqref{def:SchemeNum} with the initial condition $u^0:=\dsp+\hg$ is well-defined for all $n\in\N$.
			\item If we introduce the sequences $(h^n)_{n\in\N}$ defined by:
			$$\forall n\in\N,\quad h^n:= u^n-\dspD,$$
			then the sequence $h^n$ belongs to $\ell^1_{\gamma_1}\cap \ell^\infty_{\gamma_\infty}$ for all $n\in\N$ and:
			\begin{equation}\label{Th:in}
				\forall n\in\N ,\quad \left\|h^n\right\|_{\ell^1_{\gamma_1}}\leq \frac{C_0}{(n+1)^{\pg_1}}\left\|\hg\right\|_{\ell^1_{\Gamma}}\quad \text{ and } \quad  \left\|h^n\right\|_{\ell^\infty_{\gamma_\infty}}\leq \frac{C_0}{(n+1)^{\pg_\infty}}\left\|\hg\right\|_{\ell^1_{\Gamma}}. 
			\end{equation}
			Up to having $\pg_1>0$ (resp. $\pg_\infty>0$), this implies that the solution $(u^n)_{n\in\N}$ of the numerical scheme converges towards $\dspD$ in the $\ell^1_{\gamma_1}$-norm (resp. $\ell^\infty_{\gamma_\infty}$-norm).
		\end{itemize}
	\end{theorem}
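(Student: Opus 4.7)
The plan is to set up a Duhamel-type iteration around the linear operator $\Ldsp$ and close a bootstrap argument using the decay estimates of Proposition \ref{prop:Est}. First, Hypothesis \ref{H:identification} provides the unique $\delta \in I$ satisfying \eqref{Th:IDdelta}, and the inequality \eqref{in:M} yields $|\delta| \leq C_\delta |\sum_{j \in \Z}\hg_j| \leq C_\delta \varepsilon$. Combined with the exponential control \eqref{DSP_CV_ExpoDelta}, this gives $\|\dsp - \dspD\|_{\ell^1_\Gamma} \lesssim |\delta| \lesssim \varepsilon$, so that the initial error
\[
h^0 := u^0 - \dspD = \hg + (\dsp - \dspD)
\]
satisfies $\|h^0\|_{\ell^1_\Gamma} \lesssim \varepsilon$ and, most importantly, $\sum_{j \in \Z} h^0_j = \sum_{j \in \Z} \hg_j - M(\delta) = 0$; hence $h^0 \in \E_\Gamma$, which is precisely what allows one to invoke the $\E_\Gamma$-gain \eqref{prop:Est:L^n:1}--\eqref{prop:Est:L^n:infty}.

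Writing $u^{n+1} = \Nc(\dspD + h^n)$ and $\dspD = \Nc(\dspD)$, a Taylor expansion of $\Nc$ around $\dspD$ gives
\[
h^{n+1} = \LdspD\, h^n + Q^\delta(h^n) = \Ldsp\, h^n + (\LdspD - \Ldsp)\, h^n + Q^\delta(h^n),
\]
where $Q^\delta(h^n)$ is quadratic in a bounded stencil of $h^n$. The conservative form \eqref{def:evolOpe} of the scheme allows one to factor $Q^\delta(h) = (Id - \Tc)\, \widetilde Q^\delta(h)$ with $\widetilde Q^\delta$ also quadratic (this is the discrete analogue of a flux-differencing remainder). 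Iterating Duhamel then yields
\[
h^n = \Ldsp^n\, h^0 + \sum_{k=0}^{n-1} \Ldsp^{n-1-k}(\LdspD - \Ldsp)\, h^k + \sum_{k=0}^{n-1} \Ldsp^{n-1-k}(Id - \Tc)\, \widetilde Q^\delta(h^k).
\]
The linear piece $\Ldsp^n h^0$ is bounded via \eqref{prop:Est:L^n:1}--\eqref{prop:Est:L^n:infty} using $h^0 \in \E_\Gamma$ and the specific value of $\Gamma$ in \eqref{def:Gamma}, and the generator-perturbation term is treated with \eqref{prop:Est:L^n(LD-L)}, whose freedom to choose $p$ arbitrarily large makes $(n-k)^{-p}$ absorb any bootstrap power, leaving only the prefactor $|\delta| = O(\varepsilon)$.

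The nonlinear term is controlled via product estimates of the form $\|\widetilde Q^\delta(h^k)\|_{\ell^1_{\gamma_1+\gamma_\infty}} \lesssim \|h^k\|_{\ell^1_{\gamma_1}}\|h^k\|_{\ell^\infty_{\gamma_\infty}}$ and $\|\widetilde Q^\delta(h^k)\|_{\ell^\infty_{2\gamma_\infty}} \lesssim \|h^k\|_{\ell^\infty_{\gamma_\infty}}^2$, followed by \eqref{prop:Est:L^n(id-T):1,1}--\eqref{prop:Est:L^n(id-T):infty,infty}. Under the bootstrap templates
\[
\|h^k\|_{\ell^1_{\gamma_1}} \leq 2 C_0 (k+1)^{-\pg_1} \|\hg\|_{\ell^1_\Gamma}, \qquad \|h^k\|_{\ell^\infty_{\gamma_\infty}} \leq 2 C_0 (k+1)^{-\pg_\infty} \|\hg\|_{\ell^1_\Gamma},
\]
the argument reduces to convolution sums $\sum_{k=0}^{n-1} (n-k)^{-b}(k+1)^{-a}$, which Lemma \ref{lem:InSum} bounds by $(n+1)^{-c}$ exactly under the hypothesis \eqref{cond:H}. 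The four-parameter conditions \ref{Th:C1}--\ref{Th:C4} are tailored so that each sum arising from the two halves of the splitting at $k \simeq n/2$, in each of the two output norms, with the two available product bounds for $\widetilde Q^\delta$, admits such an estimate with $c = \pg_1$ or $c = \pg_\infty$. Because the nonlinear contribution is quadratic of size $\varepsilon^2$ and the linear-perturbation contribution is of size $|\delta|\cdot\varepsilon = O(\varepsilon^2)$, for $\varepsilon$ small enough the bootstrap closes with a strictly improved constant, giving \eqref{Th:in}.

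The main obstacle is to close this bootstrap simultaneously in $\ell^1_{\gamma_1}$ and $\ell^\infty_{\gamma_\infty}$ with matching constants and exponents, which is precisely why conditions \ref{Th:C1}--\ref{Th:C4} are required in their full form rather than just one decay rate at a time. A final continuation argument, combined with the uniform smallness of $\|h^n\|_{\ell^\infty_{\gamma_\infty}}$ and the fact that the values $\dspD_j$ remain in a relatively compact subset of the open set $\Uc$ (Hypothesis \ref{H:CVExpo}), ensures that $u^n \in \Uc^\Z$ for every $n \in \N$, so that the scheme is well-defined globally in time.
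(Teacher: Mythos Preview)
Your proposal is correct and follows essentially the same approach as the paper: identify $\delta$ via Hypothesis~\ref{H:identification} so that $h^0\in\E_\Gamma$, write the Duhamel formula for $h^n$ with the three pieces $\Ldsp^n h^0$, $\Ldsp^{n-1-k}(\LdspD-\Ldsp)h^k$, and $\Ldsp^{n-1-k}(Id-\Tc)\widetilde Q^\delta(h^k)$, bound each using Proposition~\ref{prop:Est} together with the quadratic product estimates on $\widetilde Q^\delta$ (the paper's Lemma~\ref{lem:inQ}), split the sums at $k\simeq n/2$ and close with Lemma~\ref{lem:InSum} under conditions \ref{Th:C1}--\ref{Th:C4}, while the uniform $\ell^\infty$ smallness keeps $u^n\in\Uc^\Z$. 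The only cosmetic difference is that the paper runs a clean induction with a fixed constant $C_0$ (choosing $\varepsilon$ so that the nonlinear and $(\LdspD-\Ldsp)$ contributions fit under $C_0 - C_\Lcc(\cdot,\Gamma)(C_M+1)$) rather than a $2C_0$-bootstrap, but the content is the same.
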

	
	Theorem \ref{Th} provides a first partial answer to \cite[Open Question 5.3]{Serre2007} in the case of \textit{scalar} conservation laws. Let us clarify some details in the statement of Theorem \ref{Th}. The role of the constants $\gamma_1$, $\gamma_\infty$, $\pg_1$ and $\pg_\infty$ appears clearly in \eqref{Th:in}. The constants $\gamma_1$ and $\gamma_\infty$ correspond to choices of weights of spaces in which the sequences $h^n$ are evaluated and the constants $\pg_1$ and $\pg_\infty$ correspond to decay rates. Thus, if Hypotheses \ref{H:SDSP}-\ref{H:Mpm1} and in particular the spectral stability assumption (Hypotheses \ref{H:spec} and \ref{H:Evans}) are verified, Theorem \ref{Th} allows to find sets of parameters $\gamma_1,\gamma_\infty,\pg_1,\pg_\infty\in[0,+\infty[$ such that the nonlinear stability estimates \eqref{Th:in} with \textit{explicit} decay rates would be satisfied for initial perturbations $\hg$ small in some polynomially-weighted space $\ell^1_\Gamma$. The only conditions that the constants $\gamma_1$, $\gamma_\infty$, $\pg_1$ and $\pg_\infty$ must verify are \ref{Th:C1}-\ref{Th:C4}. We also propose an explicit choice of constant $\Gamma$, defined by \eqref{def:Gamma}. Let us observe that the choice of $\delta$ determined by \eqref{Th:IDdelta} corresponds to \eqref{IDMasseFinal} introduced earlier in the state of the art. In Section \ref{sec:Num}, we will present specific choices of constants $\gamma_1,\gamma_\infty,\pg_1,\pg_\infty\in[0,+\infty[$ that satisfy \ref{Th:C1}-\ref{Th:C4} in order to have more concrete uses of Theorem \ref{Th} on which one could rely. We will also numerically test Theorem \ref{Th} with those cases. 
	
	Theorem \ref{Th} and its proof can be seen to some extent as an adaptation in the fully discrete setting of the nonlinear orbital stability result for semi-discrete shock profile \cite[Theorem 5.1]{BenzoniGavage2003} which is itself inspired by the articles \cite{Zumbrun2000,Mascia2002}. To briefly summarize, we will use a proof by induction. For all $n\in\N$, we will find a way to express the perturbation $h^{n+1}$ using the perturbations $h^m$ and the operators $\Ldsp^m$, $\Ldsp^m(Id-\Tc)$ and $\Ldsp^m(\LdspD-\Ldsp)$ for $m\in\lbrace0,\hdots n\rbrace$. The decomposition of the Green's function deduced in \cite{Coeuret2024d} allowed us to prove sharp bounds on those families of operators (Proposition \ref{prop:Est}). This will allow us to prove inequalities on the sequences $h^n$ by induction. Let us however point out that, contrarily with the proofs of nonlinear orbital stability in \cite{Zumbrun2000,Mascia2002,BenzoniGavage2003}, we do not try to approach the shock location to improve the estimates \eqref{Th:in} by introducing a sequence $(\delta_n)_{n\in\N}$ such that we would try to find estimates on $h^n:=u^n-\overline{u}^{\delta_n}$. We claim that, for several reasons related to the discrete nature of the problem in space and in time, this would be far more difficult to consider. This would however be an interesting direction to follow to improve Theorem \ref{Th}.
	
	\vspace{0.1cm}\noindent\textbf{\underline{Novelties and limitations of Theorem \ref{Th}}}\vspace{0.1cm}
	
	Let us make some observations on Theorem \ref{Th} in order to compare it with the state of the art surrounding the stability theory of discrete shock profiles presented above and discuss on possible future improvements. 
	
	\begin{itemize}
		\item The first main observation is that, due to the adaptation of the stability analysis performed in \cite{Zumbrun1998}, we did not have to introduce any weakness assumption of the shock \eqref{def:choc} but rather a spectral stability assumption on the discrete shock profile $\dsp$ (Hypotheses \ref{H:spec} and \ref{H:Evans}).
		\item With regards to the restriction on the numerical schemes considered, we impose on the numerical schemes that they must introduce artificial possibly high-order viscosity (Hypothesis \ref{H:F}). For instance, the main result of the present article does not apply for the Lax-Wendroff scheme since it displays a dispersive behavior. This "diffusive" limitation is necessary due to the result \cite[Theorem 1]{Coeuret2024d} which only applies for numerical schemes displaying a parabolic behavior. However, this still allows us to consider all first order schemes and also higher odd ordered schemes. Let us however point out that, in a recent preprint \cite{Coulombel2024a}, Coulombel and Faye proved a nonlinear orbital stability result for stationary discrete shock profiles and for the Lax-Wendroff scheme. Similarly as in the present article, the proof relies crucially on the analysis of the Green's function of a reference discrete shock profile to obtain a linear stability result. The proof of the nonlinear stability is then deduced using the same strategy and the lemmas that will be proved below.
		\item The initial perturbations $\hg$ have to be small in some polynomially-weighted space $\ell^1_\Gamma$. This is an improvement compared to results like \cite{Smyrlis1990} which impose for the initial perturbations to be small in some exponentially-weighted space. Furthermore, compared to previous nonlinear orbital stability results, Theorem \ref{Th} proves decay rates \eqref{Th:in} on the sequences $h^n$. As will be clearer in Section \ref{sec:Num} when we will give concrete choices of parameters $\gamma_1,\gamma_\infty,\pg_1,\pg_\infty\in[0,+\infty[$ to choose in Theorem \ref{Th}, the definition \eqref{def:Gamma} of the constant $\Gamma$ implies that one could choose the decay rates $\pg_1$ and $\pg_\infty$ to be as high as one wishes un to considering a larger constant $\Gamma$ that parametrizes the weights on the initial perturbations $\hg$.
		 \item Theorem \ref{Th} is restricted to the case of \textit{scalar} conservation laws. However, we claim that one could hope for a generalization of such a nonlinear orbital stability result for discrete shock profiles of \textit{systems} of conservation laws. Let us present some observations towards this direction:
		 \begin{itemize}[label=$\star$]
		 	\item One would need to investigate the identification by mass introduced in Hypothesis \ref{H:identification} since the function $M$ defined by \eqref{def:M} would no longer be scalar valued. Such a discussion is held in \cite{Serre2007}.
		 	\item  The result \cite[Theorem 1]{Coeuret2024d} which provides the long-time behavior of the Green's function associated with the linearization of the numerical scheme about spectrally stable discrete shock profiles also holds in the system case.
		 	\item In the case of systems of conservation laws, the behavior of the Green's function associated with the spectrally stable discrete shock profile is far more complicated. Indeed, contrarily to the \textit{scalar} case presented above where the long time asymptotic behavior of the Green's function is to concentrate its mass at the shock location by activating the profile $V$, the Green's function in the \textit{system} case displays generalized Gaussian waves going towards $\pm \infty$ for instance via reflected and transmitted waves (see \cite{Coeuret2024d} for more details). This relates back to the identification by mass issue we mentioned above and also implies the necessity of a careful analysis for the choices of the norms to obtain a generalization of the linear decay estimates of Proposition \ref{prop:Est}.
		 \end{itemize}
		 The difficult case of adapting Theorem \ref{Th} for systems of conservation laws could be handled in a future article.
	\end{itemize}

	\subsection{Plan of the article}
	
	The rest of the article is separated in four parts. First, in Section \ref{sec:Num}, we will present concrete choices for the constants $\gamma_1$, $\gamma_\infty$, $\pg_1$ and $\pg_\infty$ appearing in Theorem \ref{Th} and use those examples to numerically test Theorem \ref{Th}. In Section \ref{sec:PreuveTh}, we will assume that Proposition \ref{prop:Est} is already proved and focus on the main matter of this paper, the proof of Theorem \ref{Th}. Finally, in Section \ref{sec:Preuve_prop:Est}, we use the decompositions \eqref{decompoGreen} and \eqref{decompoDerGreen} of the Green's function and its discrete derivative to prove the estimates on the operators $\Ldsp^n$, $\Ldsp^n(Id-\Tc)$ and $\Ldsp^n(\LdspD-\Ldsp)$ claimed in Proposition \ref{prop:Est}.
	
	\section{Concrete choices of parameters in Theorem \ref{Th} and numerical tests}\label{sec:Num}
	
	The expression of the conditions \ref{Th:C1}-\ref{Th:C4} makes it difficult to quickly identify convenient choices of parameter $\gamma_1$, $\gamma_\infty$, $\pg_1$ and $\pg_\infty$ to satisfy them, and thus to use in Theorem \ref{Th}. In this section, we will present two specific sets of parameters which satisfy those conditions \ref{Th:C1}-\ref{Th:C4} and then numerically test Theorem \ref{Th} with those choices of parameters. In Section \ref{subsec:NumVer}, we will present our methodology to numercially test Theorem \ref{Th}. Section \ref{subsec:ChoixParam} will then be dedicated to the presentation of choices of parameters $\gamma_1$, $\gamma_\infty$, $\pg_1$ and $\pg_\infty$ and of the results of the numerical tests.
	
	\subsection{Methodology of the numerical tests}\label{subsec:NumVer}
	
	\vspace{0.1cm}\noindent\textbf{\underline{Choices of conservation law, shock and numerical scheme}}\vspace{0.1cm}
	
	Let us here present our methodology for the numerical tests of Theorem \ref{Th}. First, we will have to fix a choice of conservation law, of stationary Lax shock and of numerical scheme. Those choices will be the same as in \cite[Section 6]{Coeuret2024d}. We will consider the Burgers equation for the scalar conservation law \eqref{def:EDP}, which is defined by the choice of flux $f$:
	$$\forall u \in \R,\quad f(u):= \frac{u^2}{2}.$$
	For the stationary Lax shock, we consider the two states:
	\begin{equation}\label{Num:Shock}
		u^-=1\quad \text{ and } \quad u^+=-1
	\end{equation}
	which satisfy the Rankine-Hugoniot condition \eqref{cond:RK} as well as the Lax shock condition \eqref{cond:Lax}. Finally, with regards to the choice of numerical scheme, we consider the modified Lax-Friedrichs scheme for which the numerical flux is defined by:
	$$\forall \nu\in]0,+\infty[,\forall u_{-1},u_0\in\R, \quad F(\nu;u_{-1},u_0):=\frac{f(u_{-1})+f(u_0)}{2}+D(u_{-1}-u_0)$$
	where $D$ is a positive constant. We immediately observe that the consistency condition \eqref{cond:consistency} is verified. The discrete evolution operator $\Nc$ is defined for $u\in\R^\Z$ by:
	\begin{equation}\label{def:MLF}
		\forall j\in\Z,\quad (\Nc(u))_j := u_j - \nug\left(\frac{f(u_{j+1})-f(u_{j-1})}{2}+D\left(-u_{j+1}+2u_j-u_{j-1}\right)\right).
	\end{equation}
	Throughout the rest of the numerical test, we will consider that $\nug:=0.5$ and $D:=\frac{0.4}{\nug}$. Observing that:
	$$\nug<2D\nug<1,$$
	we claim that Hypotheses \ref{H:F}, \ref{H:inv} and \ref{H:Mpm1} are verified (see \cite[Section 6]{Coeuret2024d} for details). Let us observe that, for the Lax-Friedrichs scheme, the constant $\mu$ is equal to $1$ in Hypothesis \ref{H:F} as it is a first order scheme.
	
	\vspace{0.1cm}\noindent\textbf{\underline{Construction of the SDSPs $\dspD$}}\vspace{0.1cm}
	
	For the choices of conservation law and scheme introduced above, we claim that we can numerically observe the existence of a family of SDSPs $(\dspD)_{\delta\in \R}$ associated with the shock \eqref{Num:Shock}. Indeed, for $\delta\in \R$, if we consider the solution $(u^{n,\delta})_{n\in \N}$ of the numerical scheme \eqref{def:MLF} using the initial condition $u^{0,\delta}$ defined by:
	\begin{equation}\label{Num:CI}
		u^{0,\delta}_j:=\left\{\begin{array}{cc}
		1 & \quad \text{ if } j<\lfloor\frac{\delta+1}{2}\rfloor,\\ 
		\delta -2\lfloor\frac{\delta+1}{2}\rfloor & \quad \text{ if } j=\lfloor\frac{\delta+1}{2}\rfloor,\\ 
		-1 & \quad \text{ if } j>\lfloor\frac{\delta+1}{2}\rfloor,\\ 
	\end{array}\right.
	\end{equation}
	then we can numerically observe that the solution $(u^{n,\delta})_{n\in \N}$ converges towards a discrete shock profile $\dspD$, i.e. a stationary solution of the numerical scheme $\Nc$ linking the two states $u^-$ and $u^+$. We represent on Figure \ref{fig:SDSP} some of those limits $\dspD$. We have a thus a continuum of SDSPs $(\dspD)_{\delta \in \R}$ associated with our shock, as stated in Hypothesis \ref{H:SDSP}, and they seem to satisfy the estimates of Hypothesis \ref{H:CVExpo}. Furthermore, the conservative nature of the numerical scheme considered and the definition \eqref{Num:CI} of the initial conditions $u^{0,\delta}$ imply that:
	$$\forall \delta \in \R,\forall n\in \N,\quad \sum_{j\in \Z} u^{n,\delta}_j-u^{n,0}_j=\sum_{j\in \Z} u^{0,\delta}_j-u^{0,0}_j=\delta.$$
	Therefore, passing to the limit in time $n$, the mass function $M$ defined by \eqref{def:M} verifies:
	$$\forall \delta \in \R,\quad M(\delta)=\delta.$$
	Thus, Hypothesis \ref{H:identification} would be verified in this case. We can even say that we have a parametrization by mass of the discrete shock profiles $(\dspD)_{\delta\in \R}$.
	
	\vspace{0.1cm}\noindent\textbf{\underline{On the verification of the estimates \eqref{Th:in} of Theorem \ref{Th}}}\vspace{0.1cm}
	
	We will assume that the discrete shock profile $\dsp:=\dsp^0$ is spectrally stable and thus assume that we can use Theorem \ref{Th}. Let us consider a set of parameters $(\gamma_1,\gamma_\infty,\pg_1,\pg_\infty)\in[0,+\infty[^4$ that satisfy conditions \ref{Th:C1}-\ref{Th:C4} and define the constant $\Gamma$ using \eqref{def:Gamma}.  We want to verify if the inequality \eqref{Th:in} is verified. To do so, we consider the following family of initial perturbation $(\hg_J)_{J\in \N\backslash\left\{0\right\}}$ defined by:
	$$\forall J\in \N\backslash\left\{0\right\},\forall j\in \Z, \quad (\hg_{J})_j:= \left\{ \begin{array}{cc}
		-\displaystyle\frac{1}{1+(1+|J|)^\Gamma}& \quad \text{ if } j=0,\\
		\displaystyle\frac{1}{1+(1+|J|)^\Gamma}& \quad \text{ if } j=J,\\
		0 & \quad \text{ else.}
	\end{array} \right. $$
	We observe that we constructed those initial perturbations $\hg_J$ so that:
	\begin{equation*}
		\forall J\in \N\backslash\left\{0\right\}, \quad \left\|\hg_J\right\|_{\ell^1_\Gamma}=1\quad \text{ and } \quad \sum_{j\in\Z}(\hg_J)_j=0.
	\end{equation*}
	We construct the solution $(u^n_J)_{n\in\N}$ of the modified Lax-Friedrichs scheme \eqref{def:MLF} with the initial condition $u^0_J:= \dsp+\hg_J$. Since the sequences $\hg_J$ are zero-mass perturbations, the choice of $\delta$ in Theorem \ref{Th} defined by \eqref{Th:IDdelta} is necessarily $\delta=0$ and therefore the solution $(u^n_J)_{n\in\N}$ converges for long time $n$ towards the initial discrete shock profile $\dsp$. If we define:
	$$\forall J\in\N\backslash\left\{0\right\},\forall n\in\N,\quad h^n_J:= u^n_J-\dsp, $$
	Theorem \ref{Th} implies that the sequences $h^n_J$ should belong to $\ell^1_{\gamma_1}\cap \ell^\infty_{\gamma_\infty}$ and there should exist a constant $C_0>0$ such that:
	\begin{subequations}\label{Num:in}
		\begin{align}
			\forall J\in\N\backslash\left\{0\right\},\forall n\in\N,\quad&  \left\|h^n_J\right\|_{\ell^1_{\gamma_1}}\leq \frac{C_0}{(n+1)^{\pg_1}}\left\|\hg_J\right\|_{\ell^1_{\Gamma}},\label{Num:in1}\\
			\forall J\in\N\backslash\left\{0\right\},\forall n\in\N,\quad&  \left\|h^n_J\right\|_{\ell^\infty_{\gamma_\infty}}\leq \frac{C_0}{(n+1)^{\pg_\infty}}\left\|\hg_J\right\|_{\ell^1_{\Gamma}}.\label{Num:inInf}
		\end{align}
	\end{subequations}
	We fix two constants $n_{max},J_{max}\in\N\backslash\left\{0\right\}$ to numerically test \eqref{Num:in}. We will compute for times $n\in \left\{0,\hdots,n_{max}\right\}$ the values:
	\begin{equation}\label{Num:Sup}
		\sup_{J\in \left\{1,\hdots,J_{max}\right\}}\log\left(\frac{\left\|h^n_J\right\|_{\ell^1_{\gamma_1}}}{\left\|\hg_J\right\|_{\ell^1_{\Gamma}}}\right) \quad \text{ and } \quad \sup_{J\in\left\{1,\hdots,J_{max}\right\}}\log\left(\frac{\left\|h^n_J\right\|_{\ell^\infty_{\gamma_\infty}}}{\left\|\hg_J\right\|_{\ell^1_{\Gamma}}}\right).
	\end{equation}
	We will then compute linear regressions of the logarithm of those values \eqref{Num:Sup} with respect to $\log(n)$ and observe if the slope obtained via this linear regression is indeed inferior to the slope expected via \eqref{Num:in}, i.e. $-\pg_1$ or $-\pg_\infty$ depending on what is computed. We will also display figures that represent for all $J\in\left\{1,\hdots,J_{max}\right\}$ the logarithm of the ratio of $\left\|h^n_J\right\|_{\ell^1_{\gamma_1}}$ or $\left\|h^n_J\right\|_{\ell^\infty_{\gamma_\infty}}$ with $\left\|\hg_J\right\|_{\ell^1_{\Gamma}}$. In our numerical tests, we choose $J_{max}=500$ and $n_{max}$ large enough for the linear regression depending on our choice of parameters $\gamma_1$, $\gamma_\infty$, $\pg_1$ and $\pg_\infty$. 
	
	Let us observe that the choice of initial perturbations $(\hg_J)_{J\in\N\backslash\left\{0\right\}}$ defined by \eqref{Num:CI} can seem fairly arbitrary. We point out that we tried to add some other families of initial perturbations $\hg_J$ to test if this changed anything. However, it seems as though the supremum \eqref{Num:Sup} was achieved for this choice of initial perturbation or its opposite, at least in our tests.
	
	\subsection{Choices of parameters \texorpdfstring{$\gamma_1$, $\gamma_\infty$, $\pg_1$ and $\pg_\infty$}{gamma1, gammainfty, p1, pinfty}}\label{subsec:ChoixParam}
	
	Let us start this section by describing two choices of parameters $(\gamma_1,\gamma_\infty,\pg_1,\pg_\infty)\in[0,+\infty[^4$ satisfying conditions \ref{Th:C1}-\ref{Th:C4} and we will then present the results obtained via the methodology presented above.  
		
	\textbf{\underline{Choice 1 of parameters}}
	
	We consider a positive constant $\pg\in[0,+\infty[$ such that:
	\begin{equation*}
		\pg \geq \frac{1}{2}\left(1-\frac{1}{\mu}\right).
	\end{equation*}
	Furthermore, for $\mu=1$ (i.e. schemes of order $1$), this inequality must be strict. Then, Theorem \ref{Th} holds for:
	\begin{equation}\label{NumChoix1}
		\gamma_1:=\pg,\quad \gamma_\infty:=\pg+\frac{1}{2\mu},\quad \pg_1:=\pg,\quad \pg_\infty:=\pg
	\end{equation}
	which satisfy conditions \ref{Th:C1}-\ref{Th:C4}. We then have that the constant $\Gamma$ defined by \eqref{def:Gamma} satisfies:
	\begin{equation*}
		\Gamma:= \pg+\max\left(\pg,\frac{1}{2\mu}\right). 
	\end{equation*}
	
	\textbf{\underline{Choice 2 of parameters}}
	
	We consider a positive constant $\pg\in[0,+\infty[$ such that:
	\begin{equation}\label{Num:CondChoix2}
		\pg \geq \max\left(\frac{1}{2}\left(1-\frac{1}{\mu}\right),\frac{1}{2\mu}\right).
	\end{equation}
	Then, Theorem \ref{Th} holds for:
	\begin{equation}\label{NumChoix2}
		\gamma_1:=\pg,\quad \gamma_\infty:=\pg,\quad \pg_1:=\pg,\quad \pg_\infty:=\pg+\frac{1}{2\mu}
	\end{equation}
	which satisfy conditions \ref{Th:C1}-\ref{Th:C4}. We then have that the constant $\Gamma$ defined by \eqref{def:Gamma} satisfies:
	\begin{equation*}
		\Gamma:= 2\pg.
	\end{equation*}
	
	\textbf{\underline{Numerical results}}
	
	\begin{table}
		\begin{center}
			\begin{tabular}{|c|c||c|c||c|c|}
				\hline Choice of parameters & $\pg$ & Slope obtained for \eqref{Num:in1} & $-\pg_1$ & Slope obtained for \eqref{Num:inInf}& $-\pg_\infty$ \\
				\hline 1 - \eqref{NumChoix1} & $0.3$ & $-0.518270$ & $-0.3$ & $-0.601739$ & $-0.3$ \\ 
				\hline 1 - \eqref{NumChoix1} & $0.5$ & $-0.555493$ & $-0.5$ & $-0.620238$ & $-0.5$ \\ 
				\hline 1 - \eqref{NumChoix1} & $1$ & $-1.062254$ & $-1$ & $-1.057359 $ & $-1$ \\ 
				\hline 2 - \eqref{NumChoix2} & $0.5$ & $-0.555493$ & $-0.5$ & $-0.933069 $ & $-1$ \\ 
				\hline 2 - \eqref{NumChoix2} & $1$ & $-1.062254$ & $-1$ & $ -1.548798$ & $-1.5$ \\ 
				\hline
			\end{tabular}
		\end{center}
		\caption{Slopes obtained and expected for \eqref{Num:in} via the methodology mentioned previously. We expect the values of the third and fifth column to be inferior respectively to the one in the fourth and sixth column if Theorem \ref{Th} is verified.}
		\label{Num:TableSlopes}
	\end{table}
	
	We now apply the methodology presented above to numerically test \eqref{Th:in}. The Table \ref{Num:TableSlopes} presents the slopes obtained and expected for the choices of parameters \eqref{NumChoix1} and \eqref{NumChoix2}. Figures \ref{Num:FigV1} and \ref{Num:FigV2} present the linear regressions in some specific cases.
	
	\begin{figure}
		\centering
		\begin{subfigure}{0.49\textwidth}
			\includegraphics[width=\textwidth]{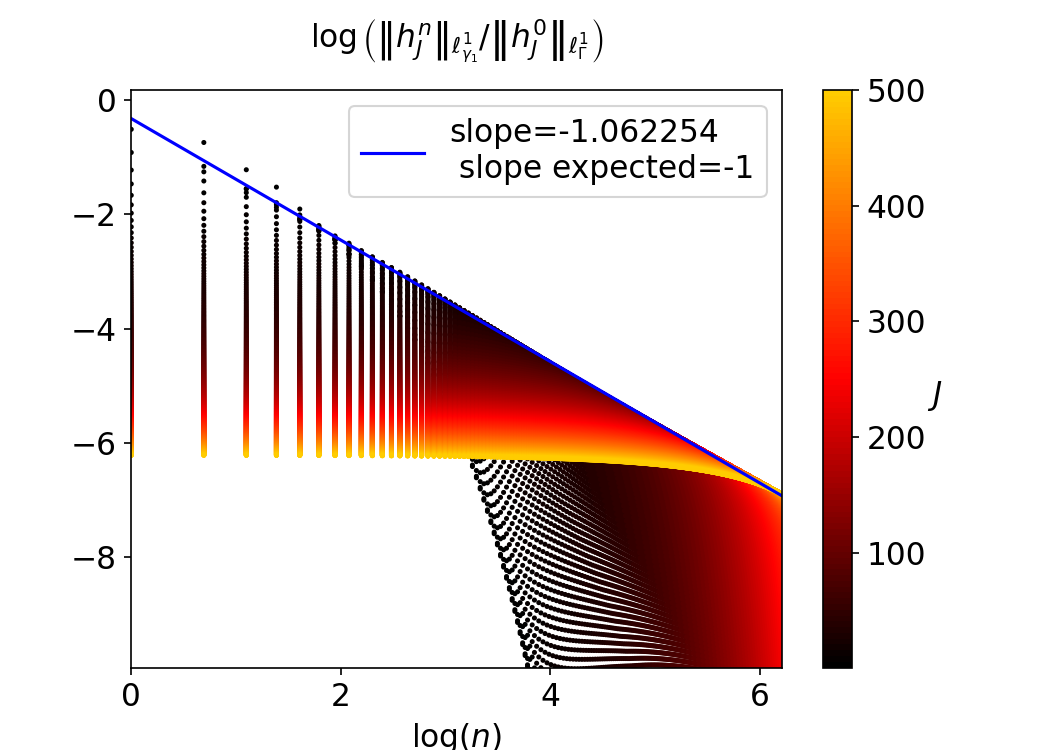}
		\end{subfigure}
		\hfill
		\centering
		\begin{subfigure}{0.49\textwidth}
			\includegraphics[width=\textwidth]{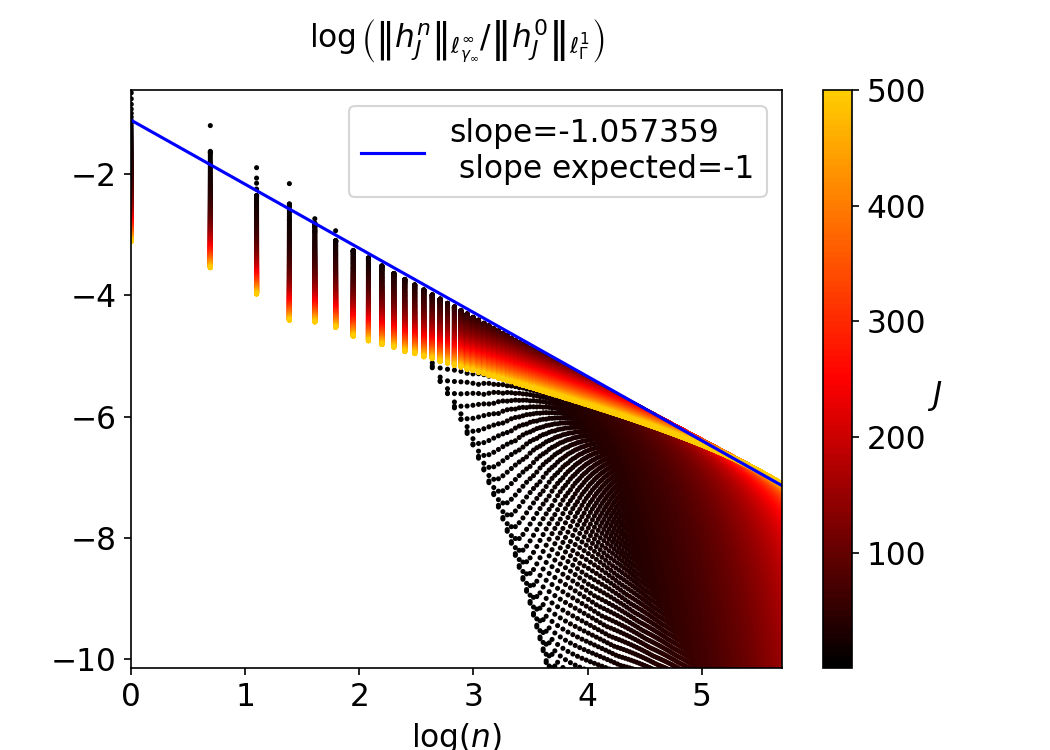}
		\end{subfigure}
		\caption{Representation of the values \eqref{Num:Sup} for the choice of parameters \eqref{NumChoix1} with $\pg=1$. The slopes that we obtain numerically are close to the expected slope, pointing to the fact that \eqref{Th:in} seems sharp in this case.}
		\label{Num:FigV1}
	\end{figure}
	
	The results of Table \ref{Num:TableSlopes} tend to prove that the inequality \eqref{Th:in} of Theorem \ref{Th} is verified. Indeed, we expect the values in the third and fifth columns to be respectively inferior to the values in the fourth and sixth column. We observe that this seems to be verified in all cases presented in the Table \ref{Num:TableSlopes}, except maybe for the slopes in the fifth column in the case of the parameters \eqref{NumChoix2} with $\pg=0.5$.  However, we claim that the slopes that would be obtained in this case when performing the calculations for larger values of $J_{max}$ and $n_{max}$ would be closer to the expected slope $-\pg_\infty=-1$. This case is represented on Figure \ref{Num:FigV2}.
	
	We also observe that several slopes computed in Table \ref{Num:TableSlopes} tend to be close to the expected ones. This points to the estimations \eqref{Th:in} being at least fairly sharp for some choices of parameters, for instance when we choose $\gamma_1$, $\gamma_\infty$, $\pg_1$ and $\pg_\infty$ defined by \eqref{NumChoix1} and \eqref{NumChoix2} with $\pg$ large.

	\begin{figure}
		\centering
		\begin{subfigure}{0.49\textwidth}
			\includegraphics[width=\textwidth]{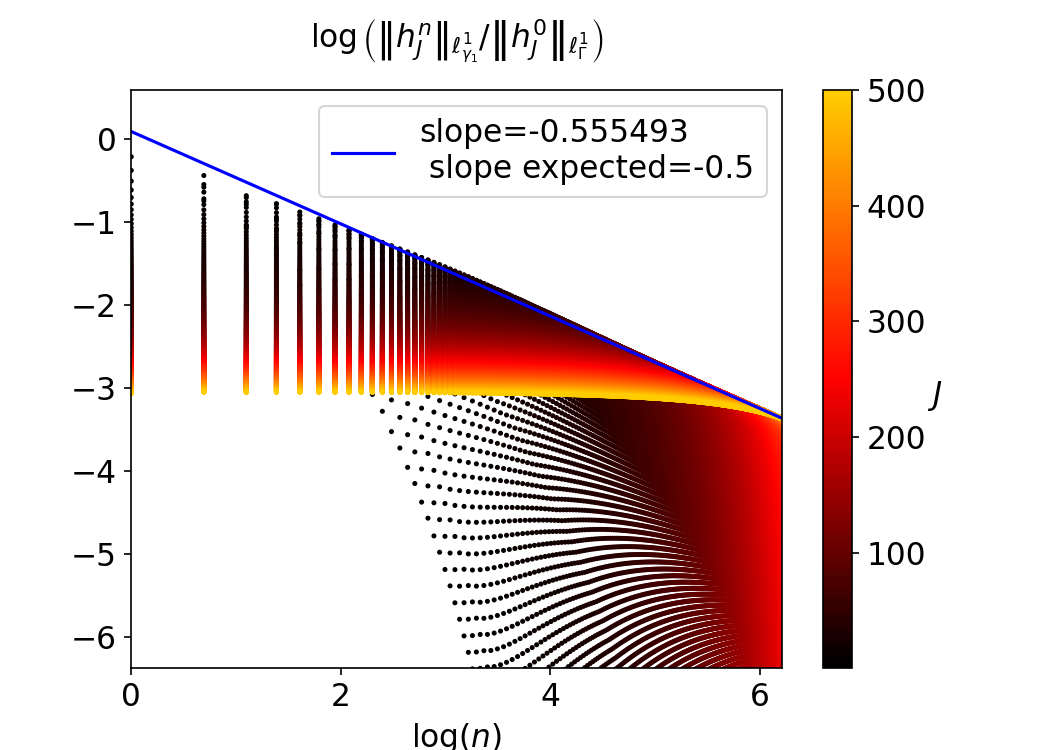}
		\end{subfigure}
		\hfill
		\centering
		\begin{subfigure}{0.49\textwidth}
			\includegraphics[width=\textwidth]{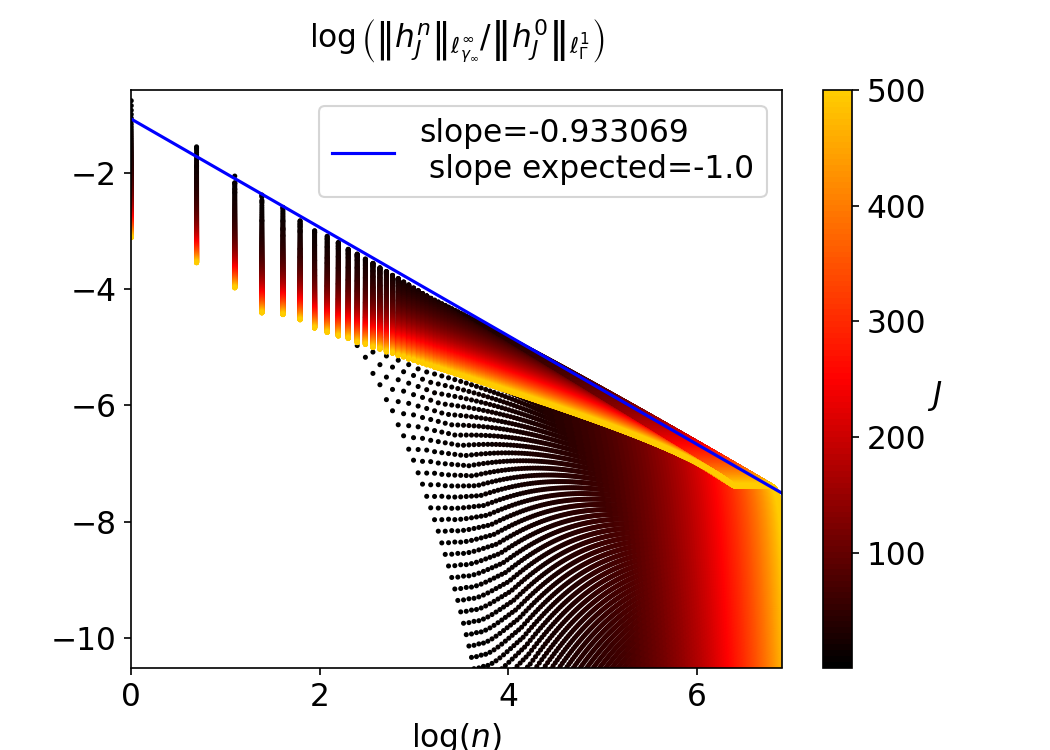}
		\end{subfigure}
		\caption{Representation of the values \eqref{Num:Sup} for the choice of parameters \eqref{NumChoix2} with $\pg=0.5$. For the figure on the right-side, the slope obtained is larger than one expected. However, if we performed the same calculations for $J_{max}$ and $n_{max}$ larger, the slope obtained would be closer to the expected one.}
		\label{Num:FigV2}
	\end{figure}
	
	\section{Nonlinear orbital stability in the scalar case}\label{sec:PreuveTh}
	
	\subsection{Necessary preliminary observations}
	
	The main goal of this section is the proof of Theorem \ref{Th}. We will consider that Proposition \ref{prop:Est} has been proved. Its proof is presented in Section \ref{sec:Preuve_prop:Est} below. Let us first start by introducing some useful lemmas and constants that will appear in the proof.
	
	\subsubsection*{Definition of a neighborhood of the states of the SDSP $\dspD$}
	
	An important part of the proof of Theorem \ref{Th} will rely on proving that, for small enough initial perturbations $\hg$, if we define the initial condition $u^0:=\dsp+\hg$, the sequences $u^n$ constructed using the numerical scheme 
	\begin{equation}\label{Th:defUn}
		\forall n\in\N,\quad u^{n+1}:= \Nc(u^n).
	\end{equation}
	are actually defined for all $n\in\N$. This is nontrivial as there could be a time $n\in\N$ and an integer $j\in\Z$ such that the scalar $u^n_j$ would not belong to the space of states $\Uc$ of the conservation law \eqref{def:EDP} and thus the solution $u^n$ of the numerical scheme would have left the domain of definition $\Uc^\Z$ of the operator $\Nc$ defined by \eqref{def:evolOpe}. We recall that we do not make any monotonicity assumption on the numerical scheme we consider here, which could allow to give a rapid and easy answer to this type of question. Since the set \eqref{set:CT} is relatively compact, there exists a radius $\Rayon>0$ such that:
	\begin{equation}\label{def:Rayon}
			\overline{\bigcup_{(\delta,j)\in I\times \Z}  B(\dspD_j,\Rayon)}\subset\Uc.
	\end{equation}
	The set defined in \eqref{def:Rayon} thus contains a neighborhood of the states of the SDSPs $\dspD$ and is also included in the space of states $\Uc$ of the scalar conservation law \eqref{def:EDP}. The definition \eqref{def:Rayon} of the radius $\Rayon$ implies that for $h\in\ell^\infty(\Z)$ such that $\left\|h\right\|_{\ell^\infty}<\Rayon$, we have that:
	$$\forall \delta\in I,\forall j \in\Z, \quad \dspD_j+h_j\in  \Uc,$$
	i.e. the perturbation $h$ is small enough so that the elements of the sequence $\dspD+h$ remain in the domain of definition $\Uc$ of the numerical scheme. Therefore, coming back to our initial issue of constructing the solution $(u^n)_{n\in\N}$ of the numerical scheme \eqref{Th:defUn}, if for some integer $n\in\N$ we can define the sequence $u^n$ and that there exists $\delta\in I$ such that
	$$\left\|u^n-\dspD\right\|_{\ell^\infty}<\Rayon, $$
	then the definition \eqref{def:Rayon} of the radius $\Rayon$ implies that $u^n$ belongs to $\Uc^\Z$ and that we can construct the sequence $u^{n+1}:=\Nc(u^n)$. This will give us later on a way to prove recursively that the solution $(u^n)_{n\in\N}$ of the numerical scheme \eqref{Th:defUn} is well-defined up to any time $n\in\N$.
	
	\subsubsection*{Decomposition near the SDSP $\dspD$ of the operator $\Nc$ in linear and nonlinear parts}
	
	Let us consider a choice of $\delta\in I$ and a sequence $h\in \ell^\infty(\Z)$ such that $\left\|h\right\|_{\ell^\infty}<\Rayon$ where the radius $\Rayon$ is defined by \eqref{def:Rayon}. We recall that \eqref{def:Rayon} implies that the elements of the sequence $\dspD+h$ belong to the space of states $\Uc$. Using the definition \eqref{def:evolOpe} of the nonlinear evolution operator $\Nc$, we thus have that for all $j\in\Z$:
	\begin{multline}\label{Th:EgInterm}
		\Nc(\dspD+h)_j = \dspD_j+h_j \\ - \nug\left(F\left(\nug;\dspD_{j-p+1}+h_{j-p+1}, \hdots, \dspD_{j+q}+h_{j+q}\right) - F\left(\nug;\dspD_{j-p}+h_{j-p}, \hdots, \dspD_{j+q-1}+h_{j+q-1}\right)\right).
	\end{multline}
	Let us now introduce the sequence $\QD(h)$ defined for $\delta \in I$ and $h\in \ell^\infty(\Z)$ such that $\left\|h\right\|_{\ell^\infty}<\Rayon$ by:
	\begin{equation}
		\forall j\in\Z,\quad \QD(h)_j:= \nug F(\nug; \dspD_{j-p}+h_{j-p}, \hdots, \dspD_{j+q-1}+h_{j+q-1}) - \nug F(\nug; \dspD_{j-p}, \hdots, \dspD_{j+q-1}) -\sum_{k=-p}^{q-1} b^\delta_{j,k} h_{j+k}
	\end{equation}
	where the scalars $b^\delta_{j,k}$ defined by \eqref{def:Bjk} are equal to
	$$ b^\delta_{j,k}:= \nug \partial_{u_k}F(\nug; \dspD_{j-p}, \hdots, \dspD_{j+q-1}).$$
	We observe that, since the sequence $\dspD$ is a fixed point of the nonlinear evolution operator $\Nc$ defined by \eqref{def:evolOpe}, the sequence
	$$\left(F(\nug; \dspD_{j-p},\hdots,\dspD_{j+q-1})\right)_{j\in\Z}$$
	is constant. Thus, the equality \eqref{Th:EgInterm} can be rewritten as:
	\begin{equation}\label{Th:EgInterm1}
		\forall j\in\Z,\quad \Nc(\dspD+h)_j = \dspD_j+(\LdspD h)_j -\QD(h)_{j+1}+\QD(h)_j,
	\end{equation}
	where the operator $\LdspD$ is defined by \eqref{def:linearizedScheme} and corresponds to the linearization of $\Nc$ about the SDSP $\dspD$. Recalling that the shift operator $\Tc$ is defined by \eqref{def:Tc}, the equality \eqref{Th:EgInterm1} implies that:
	\begin{equation}\label{decompoNcLcc}
		\forall \delta\in I,\forall h\in\ell^\infty(\Z),\quad \left\|h\right\|_{\ell^\infty} <\Rayon \quad \Rightarrow\quad \Nc(\dspD+h) = \dspD +\LdspD h + (Id-\Tc)\QD(h).
	\end{equation}
	
	The sequence $\QD(h)$ should be thought of as a nonlinear quadratic remainder term. Indeed, the following lemma, which will be proved in the Appendix of the paper (Section \ref{sec:Appendix}), allows us to obtain sharp and useful bounds for the sequences $\QD(h)$. We recall that the vector spaces $\ell^1_\gamma$ and $\ell^\infty_\gamma$ for $\gamma\in[0,+\infty[ $ are defined by \eqref{def:EspAPoids}.
	\begin{lemma}\label{lem:inQ}
		Let us consider two constants $\gamma_1,\gamma_\infty\in[0,+\infty[$. 
		\begin{subequations}
			\begin{itemize}
				\item There exists a constant $C_{Q,1}(\gamma_1,\gamma_\infty)>0$ such that for any $\delta \in I$ and sequence $h\in \ell^1_{\gamma_1}\cap\ell^\infty_{\gamma_\infty}$ which verifies:
				$$\left\|h\right\|_{\ell^\infty}<\Rayon,$$
				then the sequence $\QD(h)$ belongs to $\ell^1_{\gamma_1+\gamma_\infty}$ and:
				\begin{equation}
					\left\|\QD(h)\right\|_{\ell^1_{\gamma_1+\gamma_\infty}} \leq C_{Q,1}(\gamma_1,\gamma_\infty)\left\|h\right\|_{\ell^1_{\gamma_1}}\left\|h\right\|_{\ell^\infty_{\gamma_\infty}}.\label{lem:inQ:ell1}
				\end{equation}
				\item There exists a constant $C_{Q,\infty}(\gamma_\infty)>0$ such that for any $\delta \in I$ and sequence $h\in \ell^\infty_{\gamma_\infty}$ which verifies:
				$$\left\|h\right\|_{\ell^\infty}<\Rayon,$$
				then the sequence $\QD(h)$ belongs to $\ell^\infty_{2\gamma_\infty}$ and:
				\begin{equation}
					\left\|\QD(h)\right\|_{\ell^\infty_{2\gamma_\infty}}\leq C_{Q,\infty}(\gamma_\infty){\left\|h\right\|_{\ell^\infty_{\gamma_\infty}}}^2.\label{lem:inQ:ellinfty}
				\end{equation}
			\end{itemize}
		\end{subequations}
	\end{lemma}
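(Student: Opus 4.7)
The plan is to recognize that $\QD(h)_j$ is precisely the second-order Taylor remainder of $\nug F(\nug;\cdot)$ at the point $(\dspD_{j-p},\hdots,\dspD_{j+q-1})$, evaluated at the increment $(h_{j-p},\hdots,h_{j+q-1})$. Indeed, the subtracted linear part $\sum_{k=-p}^{q-1} b^\delta_{j,k}h_{j+k}$ is exactly the first-order Taylor polynomial, by the definition \eqref{def:Bjk} of $b^\delta_{j,k}$. So by Taylor's formula with integral remainder,
\begin{equation*}
\QD(h)_j = \nug\sum_{k_1,k_2=-p}^{q-1}h_{j+k_1}h_{j+k_2}\int_0^1 (1-t)\,\partial_{u_{k_1}}\partial_{u_{k_2}}F\bigl(\nug;\dspD_{j-p}+th_{j-p},\hdots,\dspD_{j+q-1}+th_{j+q-1}\bigr)\,dt.
\end{equation*}

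The first key step is to obtain a uniform bound on the integral. Under the assumption $\|h\|_{\ell^\infty}<\Rayon$, the arguments $\dspD_{j+\ell}+th_{j+\ell}$ for $t\in[0,1]$, $\ell\in\{-p,\hdots,q-1\}$, $j\in\Z$, $\delta\in I$ all lie in the compact set defined in \eqref{def:Rayon}, which is contained in $\Uc$. Since $F$ is $\Ccc^\infty$, the second partial derivatives of $F$ are uniformly bounded on that compact set. This gives a constant $C>0$, independent of $\delta\in I$ and $j\in\Z$, such that
\begin{equation*}
|\QD(h)_j| \leq C\sum_{k_1,k_2=-p}^{q-1}|h_{j+k_1}|\,|h_{j+k_2}|.
\end{equation*}

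The second key step is a weight-redistribution argument. Since the shifts $k_1,k_2$ range over a fixed finite set, there exists a constant $C'>0$ such that for every $\gamma\geq 0$,
\begin{equation*}
(1+|j|)^{\gamma}\leq C'(1+|j+k|)^{\gamma}\quad\text{for all }j\in\Z,\ k\in\{-p,\hdots,q-1\},
\end{equation*}
with $C'$ depending on $\gamma$, $p$ and $q$ only. To prove \eqref{lem:inQ:ell1}, write $(1+|j|)^{\gamma_1+\gamma_\infty}\leq C(1+|j+k_1|)^{\gamma_1}(1+|j+k_2|)^{\gamma_\infty}$, take the $\ell^1$-norm in $j$, and apply Hölder's inequality (with exponents $1$ and $\infty$) on each of the finitely many terms indexed by $(k_1,k_2)$. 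After shifting indices, each term is bounded by $\|h\|_{\ell^1_{\gamma_1}}\|h\|_{\ell^\infty_{\gamma_\infty}}$, yielding \eqref{lem:inQ:ell1}. For \eqref{lem:inQ:ellinfty}, split $(1+|j|)^{2\gamma_\infty}\leq C(1+|j+k_1|)^{\gamma_\infty}(1+|j+k_2|)^{\gamma_\infty}$ and bound each factor by $\|h\|_{\ell^\infty_{\gamma_\infty}}$ before taking the supremum in $j$.

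There is no real obstacle here: the argument is a routine Taylor expansion together with uniform-boundedness of derivatives on the compact set carved out by Hypothesis \ref{H:CVExpo} and the choice of $\Rayon$. The only point requiring minor care is ensuring that the constants in the weight comparison depend only on $\gamma_1,\gamma_\infty,p,q$, which is immediate since the shifts are bounded.
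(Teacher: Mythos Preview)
Your proof is correct and follows essentially the same approach as the paper: recognize $\QD(h)_j$ as a second-order Taylor remainder, use compactness (via \eqref{def:Rayon}) to bound the second derivatives uniformly in $\delta$ and $j$, and then redistribute the polynomial weight across the finitely many shifts. The paper writes the Taylor bound directly as $C\bigl(\sum_k|h_{j+k}|\bigr)^2$ rather than via the integral remainder, but the argument is otherwise identical.
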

	The introduction of the sequences $\QD(h)$ and of the estimates of Lemma \ref{lem:inQ} will be central in the proof of Theorem \ref{Th}. Indeed, let us now present some formal calculations before starting the proof of Theorem \ref{Th}. We consider an initial perturbation $\hg\in\ell^1(\Z)$ such that there exists a constant $\delta\in I$ that satisfies:
	\begin{equation}\label{eg:IDTemp}
		\sum_{j\in\Z}\hg_j = M(\delta).
	\end{equation}
	Let us assume that the solution $(u^n)_{n\in\N}$ of the numerical scheme \eqref{Th:defUn} for the initial condition $u^0:=\dsp+\hg$ is well-defined for all $n\in\N$ (which, we recall, is \textit{nontrivial} and would need to be proved). Assume furthermore that, for all $n\in\N$, the sequence $h^n:=u^n-\dspD$ satisfies:
	$$\left\|h^n\right\|_{\ell^\infty}<\Rayon.$$
	Then, using \eqref{decompoNcLcc}, the equality :
	$$\forall n\in\N,\quad u^{n+1}=\Nc(u^n)$$
	can be rewritten as:
	$$\forall n\in\N,\quad h^{n+1}=\LdspD h^n +(Id-\Tc)\QD(h^n)$$
	and thus
	$$\forall n\in\N,\quad h^{n+1}=\Ldsp h^n +\left(\LdspD-\Ldsp\right)h^n+(Id-\Tc)\QD(h^n)$$
	where the operator $\Ldsp:=L^0$ is defined by \eqref{def:Ldsp}. Therefore, Duhamel's formula implies the following expression for the sequences $h^n$:
	$$\forall n\in\N,\quad h^n = \Ldsp^n h^0 + \sum_{m=0}^{n-1}\Ldsp^{n-1-m}\left(\LdspD-\Ldsp\right)h^m +\sum_{m=0}^{n-1}\Ldsp^{n-1-m}(Id-\Tc)\QD(h^m).$$
	It is quite apparent in this expression how one could use the estimates of Proposition \ref{prop:Est} on the families of operators $(\Ldsp^n)_{n\in\N}$, $(\Ldsp^n(Id-\Tc))_{n\in\N}$ and $(\Ldsp^n\left(\LdspD-\Ldsp\right))_{n\in\N}$ as well as the estimates of Lemma \ref{lem:inQ} on the sequences $\QD(h^n)$ to hopefully obtain decay estimates on the sequences $h^n$. We also observe that the condition \eqref{eg:IDTemp} on the initial perturbation $\hg$ implies that the sequence $h^0$ has a null mass, i.e. it satisfies: 
	$$\sum_{j\in\Z}h^0_j=0.$$
	This is central for the use of the estimates \eqref{prop:Est:L^n:1} and \eqref{prop:Est:L^n:infty} on the semigroup $(\Lcc^n)_{n\in\N}$. The proof of Theorem \ref{Th} will essentially use the calculations above while taking into account that we need to prove the definition of the solution $(u^n)_{n\in\N}$.
	
	\subsubsection*{A useful technical lemma}
	
	We finally introduce a useful technical lemma that will be used in the proof of Theorem \ref{Th}. It is a discrete version of \cite[Lemma 2.3]{Xin1992}.
	\begin{lemma}\label{lem:InSum}
		We consider a triplet of positive constants $(a,b,c)\in[0,+\infty[^3$ that satisfies the condition \eqref{cond:H}. There exists a constant $C_I(a,b,c)>0$ such that, for all $n\in\N$, we have that:
		$$\sum_{m=0}^{\lfloor\frac{n+1}{2}\rfloor}\frac{1}{(m+1)^a(n+1-m)^b} \leq\frac{C_I(a,b,c)}{(n+2)^c}. $$ 
	\end{lemma}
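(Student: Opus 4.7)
The key observation is that the range of summation $m \in \{0,\ldots,\lfloor(n+1)/2\rfloor\}$ forces the factor $(n+1-m)^b$ to essentially behave like $(n+2)^b$, so that this factor can be pulled out of the sum (up to a multiplicative constant) and the remaining sum $\sum_{m=0}^{\lfloor(n+1)/2\rfloor}(m+1)^{-a}$ can be estimated by standard series-integral comparison. The condition \eqref{cond:H} then arises naturally as the sharp balance needed between the growth of this remaining sum and the decay gained from $(n+1-m)^b$.

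\noindent\textbf{Step 1.} First I would establish the elementary inequality
\[
\forall m \in \{0,\ldots,\lfloor(n+1)/2\rfloor\}, \quad n+1-m \geq \tfrac{n+1}{2} \geq \tfrac{n+2}{4},
\]
which yields a constant $C_1 > 0$ (depending only on $b$) such that
\[
\frac{1}{(n+1-m)^b} \leq \frac{C_1}{(n+2)^b}
\]
uniformly in $m$ in the required range. Factoring this out reduces the estimate to controlling
\[
S_n := \sum_{m=0}^{\lfloor(n+1)/2\rfloor}\frac{1}{(m+1)^a}
\]
and showing that $S_n/(n+2)^b \leq C_I(a,b,c)/(n+2)^c$, i.e., $S_n \leq C_I'(a,b,c)(n+2)^{b-c}$.

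\noindent\textbf{Step 2.} I would then split into the three cases corresponding to \eqref{cond:H}. If $a > 1$, a comparison with the convergent integral $\int_0^{+\infty}(x+1)^{-a}dx$ gives $S_n \leq C$, and the required bound follows from $b - c \geq 0$. If $a = 1$, a comparison with $\int_1^{n+2}x^{-1}dx$ gives $S_n \leq C\log(n+2)$, and since $b - c > 0$ by hypothesis, one absorbs the logarithm into any strictly positive power (using $\log(n+2) \leq C_\varepsilon (n+2)^\varepsilon$ for any $\varepsilon \in\, ]0,b-c[$). If $a \in [0,1[$, a comparison with $\int_0^{(n+1)/2+1}(x+1)^{-a}dx$ gives $S_n \leq C(n+2)^{1-a}$, and the hypothesis $1 - a \leq b - c$ yields the claim directly.

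\noindent\textbf{Expected obstacle.} There is essentially no conceptual difficulty here; the only delicate point is to ensure that the constants obtained in each regime depend continuously on $(a,b,c)$ in a way that allows a uniform statement, and to handle the boundary case $a = 1$ where the logarithmic loss forces the strict inequality in \eqref{cond:H}. The entire argument is the discrete analogue of \cite[Lemma 2.3]{Xin1992} adapted to sums instead of integrals, so I do not anticipate any substantive difficulty beyond the careful case splitting already encoded in the condition \eqref{cond:H}.
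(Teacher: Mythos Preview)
Your proposal is correct and follows essentially the same approach as the paper: both arguments factor out the uniform bound $(n+1-m)^{-b}\lesssim (n+2)^{-b}$ on the summation range and then estimate the remaining sum $\sum_{m}(m+1)^{-a}$ by integral comparison, splitting into the three cases $a<1$, $a=1$, $a>1$ that correspond exactly to the three lines of condition~\eqref{cond:H}. The paper presents the first step via a Riemann-sum rescaling rather than a direct inequality, but the content is identical.
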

	The proof is quite immediate and will be given in the Appendix (Section \ref{sec:Appendix}).
	
	\subsection{Definition of the constants \texorpdfstring{$C_0$ and $\varepsilon$}{C0 and epsilon} appearing in Theorem \texorpdfstring{\ref{Th}}{1}}
	
	We start the proof of Theorem \ref{Th} by fixing some of the constants that will appear. As stated in Theorem \ref{Th}, we consider four constants $\gamma_1,\gamma_\infty,\pg_1,\pg_\infty\in[0,+\infty[$  which verify the conditions \ref{Th:C1}-\ref{Th:C4}. We then also define 
	\begin{equation}\label{def:GammaPreuve}
		\Gamma:= \max\left(\pg_1+\gamma_1,\pg_\infty+\gamma_\infty -\frac{1}{2\mu},\pg_\infty,\gamma_\infty\right).
	\end{equation}
	
	Let us first observe that, using the inequality \eqref{DSP_CV_ExpoDelta}, there exists a constant $C>0$ such that:
	$$\forall \delta \in I,\quad \left\|\dsp-\dspD\right\|_{\ell^1_\Gamma}\leq C|\delta|.$$
	As a consequence, using \eqref{in:M}, there exists a constant $C_M>0$ such that:
	\begin{equation}\label{in:CM}
		\forall \delta \in I,\quad \left\|\dsp-\dspD\right\|_{\ell^1_\Gamma}\leq C_M|M(\delta)|.
	\end{equation}
	Since we will later on choose $\delta\in I$ such that:
	$$M(\delta)=\sum_{j\in\Z}\hg_j,$$
	inequality \eqref{in:CM} will allow us to bound the difference $\dsp-\dspD$ by the $\ell^1$-norm of the initial perturbation $\hg$.
	
	We now introduce the constant:
	\begin{equation}\label{def:C0}
		C_0:= \left(\max(C_\Lcc(\gamma_1,\Gamma),C_\Lcc(\gamma_\infty,\Gamma)) +1\right)\left(C_M+1\right)
	\end{equation}
	where the constant $C_\Lcc(\cdot,\cdot)$ is defined via Proposition \ref{prop:Est} and the constant $C_M$ is defined by \eqref{in:CM}. This constant $C_0$ is a choice that can correspond to the one appearing in Theorem \ref{Th}. There remains to define the parameter $\varepsilon>0$ that appears in Theorem \ref{Th}. We consider $\varepsilon$ to be small enough to satisfy some conditions \eqref{set:ImM}, \eqref{in:varepsDelta} and \eqref{cond:C1C2} stated below:
	
	$\bullet$ Since the function $M$ defined by \eqref{def:M} is continuous on the open interval $I$ and $M(0)=0$, we observe that $0$ belongs to the interior of the set $M(I)$. We assume that $\varepsilon$ is small enough so that:
	\begin{equation}\label{set:ImM}
		]-\varepsilon,\varepsilon[\subset M(I).
	\end{equation}
		
	$\bullet$ We consider that $\varepsilon$ to be small enough so that the following inequality is verified, where the radius $\Rayon$ is defined by \eqref{def:Rayon}:
	\begin{equation}\label{in:varepsDelta}
		C_0\varepsilon<\Rayon.
	\end{equation}
	
	$\bullet$ We recall that the constants $C_\Lcc(\cdot,\cdot)$ and $C_L(\cdot,\cdot)$ appear in Proposition \ref{prop:Est}, the constants $C_{Q,1}(\cdot,\cdot)$ and $C_{Q,\infty}(\cdot)$ in Lemma \ref{lem:inQ}, the constant $C_I(\cdot,\cdot,\cdot)$ appears in Lemma \ref{lem:InSum} and the constants $C_\delta$, $C_M$, $C_0$ are respectively defined by \eqref{in:M}, \eqref{in:CM} and \eqref{def:C0}. Let us now notate some combinations of those constants that will appear later on in the proof of Theorem \ref{Th}. The most important constants below are $C_1$ and $C_2$ defined respectively by \eqref{def:C1} and \eqref{def:C2}. Indeed, they will appear on the condition \eqref{cond:C1C2} below that we will impose on the constant $\varepsilon$. 
	
	We start be introducing a first set of positive constants $C_1^A$, $C_1^B$, $C_1^C$, $C_1^D$ and $C_1$:
	\begin{subequations}
		\begin{align}
			C_1^A&:=C_0C_L(\gamma_1,2+\pg_1)C_\delta C_I(\pg_1,2+\pg_1,\pg_1) ,\label{def:C1A}\\
			C_1^B&:= C_0C_L(\gamma_1,2+\pg_1)C_\delta  C_I(2+\pg_1,\pg_1,\pg_1), \label{def:C1B}\\
			C_1^C&:= C_\Lcc(\gamma_1,\gamma_1+\gamma_\infty) C_{Q,1}\left(\gamma_1,\gamma_\infty\right){C_0}^2C_I\left(\pg_1+\pg_\infty,\gamma_\infty+\frac{1}{2\mu},\pg_1\right),\label{def:C1C}\\
			C_1^D&:=C_\Lcc(\gamma_1,\gamma_1+\gamma_\infty) C_{Q,1}\left(\gamma_1,\gamma_\infty\right){C_0}^2C_I\left(\gamma_\infty+\frac{1}{2\mu},\pg_1+\pg_\infty,\pg_1\right) ,\label{def:C1D}\\
			C_1&:=C_1^A+C_1^B+C_1^C+C_1^D.\label{def:C1}
		\end{align}
	\end{subequations}
	
	We then introduce the constants $C_2^A$ and $C_2^B$ defined by:
	\begin{align}
		C_2^A &:=C_0C_L(\gamma_\infty,2+\pg_\infty)C_\delta C_I(\pg_\infty,2+\pg_\infty,\pg_\infty) ,\label{def:C2A}\\ 
		C_2^B &:= C_0C_L(\gamma_\infty,2+\pg_\infty)C_\delta  C_I(2+\pg_\infty,\pg_\infty,\pg_\infty),\label{def:C2B}
	\end{align} 
	and then consider two positive constants $C_2^C$ and $C_2^D$ large enough so that they verify the following inequalities \eqref{def:C2CD}. We point out that that the conditions stated before the inequalities are linked to the conditions \ref{Th:C3} and \ref{Th:C4} we imposed on the constants $\gamma_1,\gamma_\infty,\pg_1,\pg_\infty$.
	\begin{subequations}\label{def:C2CD}
		\begin{itemize}
			\item If the triplet $\left(\pg_1+\pg_\infty,\gamma_1+\frac{1}{2\mu}+\min\left(\gamma_\infty,\frac{1}{2\mu}\right),\pg_\infty\right)$ verifies \eqref{cond:H}, then:
			\begin{equation}
				C_2^C  \geq C_\Lcc(\gamma_\infty,\gamma_1+\gamma_\infty) C_{Q,1}\left(\gamma_1,\gamma_\infty\right){C_0}^2C_I\left(\pg_1+\pg_\infty,\gamma_1+\frac{1}{2\mu}+\min\left(\gamma_\infty,\frac{1}{2\mu}\right),\pg_\infty\right). \label{def:C2C:1}
			\end{equation}
			\item If the triplet $\left(2\pg_\infty,\gamma_\infty+\min\left(\gamma_\infty,\frac{1}{2\mu}\right),\pg_\infty\right)$ verifies \eqref{cond:H}, then:
			\begin{equation}
				C_2^C  \geq C_\Lcc(\gamma_\infty,2\gamma_\infty) C_{Q,\infty}\left(\gamma_\infty\right){C_0}^2C_I\left(2\pg_\infty,\gamma_\infty+\min\left(\gamma_\infty,\frac{1}{2\mu}\right),\pg_\infty\right). \label{def:C2C:2}
			\end{equation}
			\item If the triplet $\left(\gamma_1+\frac{1}{2\mu}+\min\left(\gamma_\infty,\frac{1}{2\mu}\right),\pg_1+\pg_\infty,\pg_\infty\right)$ verifies \eqref{cond:H}, then:
			\begin{equation}
				C_2^D  \geq C_\Lcc(\gamma_\infty,\gamma_1+\gamma_\infty) C_{Q,1}\left(\gamma_1,\gamma_\infty\right){C_0}^2C_I\left(\gamma_1+\frac{1}{2\mu}+\min\left(\gamma_\infty,\frac{1}{2\mu}\right),\pg_1+\pg_\infty,\pg_\infty\right). \label{def:C2D:1}
			\end{equation}
			\item If the triplet $\left(\gamma_\infty+\min\left(\gamma_\infty,\frac{1}{2\mu}\right),2\pg_\infty,\pg_\infty\right)$ verifies \eqref{cond:H}, then:
			\begin{equation}
				C_2^D  \geq C_\Lcc(\gamma_\infty,2\gamma_\infty) C_{Q,\infty}\left(\gamma_\infty\right){C_0}^2C_I\left(\gamma_\infty+\min\left(\gamma_\infty,\frac{1}{2\mu}\right),2\pg_\infty,\pg_\infty\right).\label{def:C2D:2}
			\end{equation}
		\end{itemize}
	\end{subequations}

	We then finally define the positive constant $C_2$:
	\begin{equation}\label{def:C2}
		C_2:=   C_2^A+C_2^B+C_2^C+C_2^D.
	\end{equation}
	The condition we impose on the parameter $\varepsilon$ is that:
	\begin{equation}\label{cond:C1C2}
		\varepsilon C_1<C_M+1\quad \text{ and } \quad \varepsilon C_2<C_M+1.
	\end{equation}
	In particular, using the definition \eqref{def:C0} of the constant $C_0$, we observe that \eqref{cond:C1C2} implies:
	\begin{subequations}
		\begin{align}
			C_\Lcc(\gamma_1,\Gamma) \left(C_M+1\right) + \varepsilon C_1 & <C_0,\label{in:varepsC1}\\
			C_\Lcc(\gamma_\infty,\Gamma)\left(C_M+1\right)  + \varepsilon C_2 & <C_0.\label{in:varepsC2}
		\end{align}
	\end{subequations}
	
	\subsection{Proof of Theorem \ref{Th} by induction}
	
	Let us now start with the proof of Theorem \ref{Th}. We consider an initial perturbation $\hg\in\ell^1_{\Gamma}$ where $\Gamma$ is defined by \eqref{def:GammaPreuve} such that:
	\begin{equation}\label{in:hg}
		\left\|\hg\right\|_{\ell^1_{\Gamma}}<\varepsilon.
	\end{equation}
	We observe \eqref{in:hg} implies that:
	\begin{equation}\label{in:hgSuite}
		\left|\sum_{j\in\Z}\hg_j\right|\leq\left\|\hg\right\|_{\ell^1}\leq\left\|\hg\right\|_{\ell^1_\Gamma}<\varepsilon.
	\end{equation}
	Using \eqref{set:ImM}, there exists $\delta \in I$ such that:
	\begin{equation}\label{eg:IDdelta}
		\sum_{j\in\Z}\hg_j=M(\delta).
	\end{equation}
	Hypothesis \ref{H:identification} also implies that the function $M$ is injective and thus that the choice of $\delta\in I$ is actually unique. Let us also observe that \eqref{in:M}, \eqref{eg:IDdelta} and \eqref{in:hgSuite} imply:
	\begin{equation}\label{in:delta}
		|\delta|< C_\delta \varepsilon.
	\end{equation}
	
	We then define the initial condition $u^0:=\dsp+\hg$. The proof of \eqref{Th} will be done by induction. We state for $n\in\N$ the following assertion $\Pg(n)$:
	
	\vspace{0.3cm}
	\begin{center}
		\noindent\fbox{\begin{minipage}{0.93\textwidth}
				\noindent \textbf{Assertion }$\Pg(n)$: The sequences $u^m$ constructed using the numerical scheme \eqref{Th:defUn} are well-defined for all $m\in\lbrace0,\hdots,n\rbrace$. Furthermore, if we define the sequences $h^m:=u^m-\dspD$ for $m\in\lbrace0,\hdots,n\rbrace$, then we have that:
				\begin{subequations}
					\begin{align}
						\forall m\in\lbrace0,\hdots,n\rbrace,\quad & \left\|h^m\right\|_{\ell^1_{\gamma_1}}\leq \frac{C_0}{(m+1)^{\pg_1}}\left\|\hg\right\|_{\ell^1_{\Gamma}},\label{Recu:1}\\
						\forall m\in\lbrace0,\hdots,n\rbrace,\quad & \left\|h^m\right\|_{\ell^\infty_{\gamma_\infty}}\leq \frac{C_0}{(m+1)^{\pg_\infty}}\left\|\hg\right\|_{\ell^1_{\Gamma}},\label{Recu:infty}\\
						\forall m\in\lbrace0,\hdots,n\rbrace,\quad & \left\|h^m\right\|_{\ell^\infty}<\Rayon,\label{Recu:delta}
					\end{align}
				\end{subequations}
				where the constant $C_0$ is defined by \eqref{def:C0} and the radius $\Rayon$ by \eqref{def:Rayon}.
		\end{minipage}}
	\end{center}
	
	\vspace{0.3cm}\underline{$\blacktriangleright$ \textbf{Initialization step:}} 
	
	The sequence $u^0$ is obviously well-defined. To prove $\Pg(0)$, we therefore have to prove \eqref{Recu:1}-\eqref{Recu:delta} for $m=0$. We begin by observing that:
	\begin{equation}\label{eg:h0}
		h^0 := u^0- \dspD = \dsp-\dspD+ \hg.
	\end{equation}
	We observe that using \eqref{in:CM} and \eqref{eg:IDdelta}, we have:
	$$\left\|\dsp-\dspD\right\|_{\ell^1_\Gamma} \leq C_M|M(\delta)|\leq C_M\left\|\hg\right\|_{\ell^1}\leq  C_M\left\|\hg\right\|_{\ell^1_\Gamma}. $$
	Therefore, \eqref{eg:h0} implies that the sequence $h^0$ belongs to $\ell^1_{\Gamma}$ and:
	\begin{equation}\label{in:h0}
		\left\|h^0\right\|_{\ell^1_\Gamma}\leq\left(C_M+1\right)\left\|\hg\right\|_{\ell^1_\Gamma}.
	\end{equation}
	Furthermore, we observe that \eqref{eg:IDdelta} and \eqref{eg:h0} imply:
	\begin{equation}\label{eg:Masseh0}
		\sum_{j\in\Z}h^0_j=0.
	\end{equation}
	Thus, the sequence $h^0$ actually belongs to the vector space $\E_\Gamma$ of zero-mass sequences of $\ell^1_\Gamma$ (see \eqref{def:Elamb}).
	
	Let us now start by proving \eqref{Recu:1} and \eqref{Recu:infty} for $m=0$. Using the definition \eqref{def:GammaPreuve} of $\Gamma$, we notice that $\Gamma\geq \gamma_1$ and $\Gamma\geq \gamma_\infty$. We then observe using \eqref{in:h0} as well as the definition \eqref{def:C0} of the constant $C_0$:
	$$\left\|h^0\right\|_{\ell^1_{\gamma_1}}\leq\left\|h^0\right\|_{\ell^1_\Gamma}\leq (C_M+1)\left\|\hg\right\|_{\ell^1_\Gamma}\leq C_0\left\|\hg\right\|_{\ell^1_\Gamma}$$
	and:
	$$\left\|h^0\right\|_{\ell^\infty_{\gamma_\infty}}\leq\left\|h^0\right\|_{\ell^1_\Gamma}\leq (C_M+1)\left\|\hg\right\|_{\ell^1_\Gamma}\leq C_0\left\|\hg\right\|_{\ell^1_\Gamma}.$$
	This implies \eqref{Recu:1} and \eqref{Recu:infty} for $m=0$. 
	
	We finally conclude the initialization by observing that \eqref{Recu:infty} for $m=0$, \eqref{in:hg} and \eqref{in:varepsDelta} imply that:
	$$\left\|h^0\right\|_{\ell^\infty}\leq\left\|h^0\right\|_{\ell^\infty_{\gamma_\infty}}\leq C_0\left\|\hg\right\|_{\ell^1_\Gamma}<C_0\varepsilon<\Rayon.$$
	This implies \eqref{Recu:delta} for $m=0$. This concludes the proof of $\Pg(0)$.
	
	\underline{$\blacktriangleright$ \textbf{Induction step:}}

	We consider $n\in\N$ such that $\Pg(n)$ is true. Let us prove that $\Pg(n+1)$ is also true. First, using \eqref{Recu:delta} for $m=n$ and the definition \eqref{def:Rayon} of the radius $\Rayon$, we have that the sequence $u^n= \dspD+h^n$ belongs to $\Uc^\Z$ and we can thus define:
	$$u^{n+1}:=\Nc(u^n).$$
	Thus, the sequence $u^{n+1}$ is well-defined. Therefore, we can from now on consider the sequence $h^{n+1}:=u^{n+1}-\dspD$. 
	
	To prove that $\Pg(n+1)$ is true, there just remains to prove the inequality \eqref{Recu:1}, \eqref{Recu:infty} and  \eqref{Recu:delta} for $m=n+1$. Before starting with the proofs of \eqref{Recu:1}-\eqref{Recu:delta} for $m=n+1$, we will need to make a slight observation. The inequality \eqref{Recu:delta} for $m\in\lbrace0,\hdots,n\rbrace$ implies that we can use the equality \eqref{decompoNcLcc} to rewrite:
	$$\forall m\in\lbrace0,\hdots,n\rbrace,\quad u^{m+1}=\Nc(u^m)$$
	as:
	$$\forall m\in\lbrace0,\hdots,n\rbrace,\quad  h^{m+1}=\LdspD h^m +(Id-\Tc)\QD(h^m)$$
	and thus:
	$$\forall m\in\lbrace0,\hdots,n\rbrace,\quad  h^{m+1}=\Ldsp h^m +\left(\LdspD-\Ldsp\right)h^m +(Id-\Tc)\QD(h^m)$$
	Using Duhamel's formula, we then have that:
	\begin{equation}\label{expression:hn+1}
		h^{n+1} = \Lcc^{n+1}h^0   + \sum_{m=0}^n\Lcc^{n-m} \left(\LdspD-\Ldsp\right) h^m + \sum_{m=0}^n\Lcc^{n-m} (Id-\Tc) \QD(h^m).
	\end{equation}
	This expression of the sequence $h^{n+1}$ will be central to prove \eqref{Recu:1} and \eqref{Recu:infty} for $m=n+1$.
	
	We will start by proving \eqref{Recu:1} for $m=n+1$. We will then focus on the proof of \eqref{Recu:infty} for $m=n+1$ which will be fairly similar. Finally, we will conclude with the proof of \eqref{Recu:delta} for $m=n+1$ which is actually a consequence of \eqref{Recu:infty} (or even \eqref{Recu:1}) for $m=n+1$.
	
	\vspace{0.2cm}\noindent\underline{$\bullet$ \textbf{Proof of \eqref{Recu:1} for $m=n+1$:}}
	
	We want to find bounds on the sequence $h^{n+1}$ in $\ell^1_{\gamma_1}$. Our goal is to prove the following bounds \eqref{Recu:1:PR} on the different terms appearing in the expression \eqref{expression:hn+1} of the sequence $h^{n+1}$, where the constants $C_1^A$, $C_1^B$, $C_1^C$ and $C^1_D$ are defined by \eqref{def:C1A}-\eqref{def:C1D}:
	\begin{subequations}\label{Recu:1:PR}
		\begin{align}
			\left\|\Ldsp^{n+1}h^0\right\|_{\ell^1_{\gamma_1}} & \leq \frac{C_\Lcc(\gamma_1,\Gamma)(C_M+1)}{(n+2)^{\pg_1}}\left\|\hg\right\|_{\ell^1_{\Gamma}}, \label{Recu:1:PR1}\\
			\left\|\sum_{m=0}^{\lfloor\frac{n+1}{2}\rfloor}\Ldsp^{n-m}\left(\LdspD-\Ldsp\right)h^m\right\|_{\ell^1_{\gamma_1}} &\leq \frac{\varepsilon C_1^{A}}{(n+2)^{\pg_1}}\left\|\hg\right\|_{\ell^1_{\Gamma}},\label{Recu:1:PR2}\\
			\left\|\sum_{m=\lfloor\frac{n+1}{2}\rfloor+1}^{n}\Ldsp^{n-m}\left(\LdspD-\Ldsp\right)h^m\right\|_{\ell^1_{\gamma_1}} &\leq \frac{\varepsilon C_1^{B}}{(n+2)^{\pg_1}}\left\|\hg\right\|_{\ell^1_{\Gamma}},\label{Recu:1:PR3}\\
			\left\|\sum_{m=0}^{\lfloor\frac{n+1}{2}\rfloor}\Ldsp^{n-m}(Id-\Tc)\QD(h^m)\right\|_{\ell^1_{\gamma_1}} &\leq \frac{\varepsilon C_1^{C}}{(n+2)^{\pg_1}}\left\|\hg\right\|_{\ell^1_{\Gamma}},\label{Recu:1:PR4}\\
			\left\|\sum_{m=\lfloor\frac{n+1}{2}\rfloor+1}^{n}\Ldsp^{n-m}(Id-\Tc)\QD(h^m)\right\|_{\ell^1_{\gamma_1}} &\leq \frac{\varepsilon C_1^{D}}{(n+2)^{\pg_1}}\left\|\hg\right\|_{\ell^1_{\Gamma}}.\label{Recu:1:PR5}
		\end{align}
	\end{subequations}
	We observe that once the inequalities \eqref{Recu:1:PR} will have been proved, using the equality \eqref{expression:hn+1} on the sequence $h^{n+1}$, we have that:
	\begin{align*}
		\left\|h^{n+1}\right\|_{\ell^1_{\gamma_1}} \leq& \left\|\Lcc^{n+1}h^0\right\|_{\ell^1_{\gamma_1}}+\left\|\sum_{m=0}^{\lfloor\frac{n+1}{2}\rfloor}\Ldsp^{n-m}\left(\LdspD-\Ldsp\right)h^m\right\|_{\ell^1_{\gamma_1}}+\left\|\sum_{m=\lfloor\frac{n+1}{2}\rfloor+1}^{n}\Ldsp^{n-m}\left(\LdspD-\Ldsp\right)h^m\right\|_{\ell^1_{\gamma_1}}\\
		&+\left\|\sum_{m=0}^{\lfloor\frac{n+1}{2}\rfloor}\Ldsp^{n-m}(Id-\Tc)\QD(h^m)\right\|_{\ell^1_{\gamma_1}}+ \left\|\sum_{m=\lfloor\frac{n+1}{2}\rfloor+1}^{n}\Ldsp^{n-m}(Id-\Tc)\QD(h^m)\right\|_{\ell^1_{\gamma_1}}\\ \leq&\frac{C_\Lcc(\gamma_1,\Gamma)(C_M+1)+ \varepsilon C_1}{(n+2)^{\pg_1}}\left\|\hg\right\|_{\ell^1_{\Gamma}}
	\end{align*}
	where the constant $C_1$ is defined by \eqref{def:C1}. Using the condition \eqref{in:varepsC1}, we will thus obtain \eqref{Recu:1} for $m=n+1$. Let us now prove the inequalities \eqref{Recu:1:PR}.
	
	\underline{$\star$ \textit{Estimate \eqref{Recu:1:PR1} on $\Lcc^{n+1}h^0$ in $\ell^1_{\gamma_1}$:} }
	
	We recall that we proved in the initialization step that the sequence $h^0$ belongs to $\E_\Gamma$. Using the estimate \eqref{prop:Est:L^n:1} of Proposition \ref{prop:Est}, the estimate \eqref{in:h0} on the sequence $h^0$ and using the definition \eqref{def:GammaPreuve} of the constant $\Gamma$ which implies that $\Gamma\geq \pg_1+\gamma_1$, we have that:
	\begin{equation*}
		\left\|\Lcc^{n+1}h^0\right\|_{\ell^1_{\gamma_1}} \leq\frac{C_\Lcc(\gamma_1,\Gamma)}{(n+2)^{\Gamma-\gamma_1}}\left\|h^0\right\|_{\ell^1_{\Gamma}}\leq \frac{C_\Lcc(\gamma_1,\Gamma)}{(n+2)^{\pg_1}}\left\|h^0\right\|_{\ell^1_{\Gamma}}\leq \frac{C_\Lcc(\gamma_1,\Gamma)(C_M+1)}{(n+2)^{\pg_1}}\left\|\hg\right\|_{\ell^1_{\Gamma}}.
	\end{equation*}
	
	\underline{$\star$ \textit{Estimate \eqref{Recu:1:PR2} on $\sum_{m=0}^{\lfloor\frac{n+1}{2}\rfloor}\Ldsp^{n-m}\left(\LdspD-\Ldsp\right)h^m$ in $\ell^1_{\gamma_1}$:} }
	
	Using the estimate \eqref{prop:Est:L^n(LD-L)} of Proposition \ref{prop:Est}, the estimate \eqref{in:delta} on $\delta$ and the estimate \eqref{Recu:1} for $m\in \left\{0,\hdots, \lfloor\frac{n+1}{2}\rfloor\right\}$, we have that:
	\begin{align}
		\begin{split}
			\left\|\sum_{m=0}^{\lfloor\frac{n+1}{2}\rfloor}\Ldsp^{n-m}\left(\LdspD-\Ldsp\right)h^m\right\|_{\ell^1_{\gamma_1}} &\leq \sum_{m=0}^{\lfloor\frac{n+1}{2}\rfloor} \frac{C_L(\gamma_1,2+\pg_1)|\delta|}{(n+1-m)^{2+\pg_1}}\underset{\leq\left\|h^m\right\|_{\ell^1_{\gamma_1}}}{\underbrace{\left\|h^m\right\|_{\ell^\infty}}}\\
			& \leq \varepsilon C_0C_L(\gamma_1,2+\pg_1)C_\delta\left(\sum_{m=0}^{\lfloor\frac{n+1}{2}\rfloor}\frac{1}{(m+1)^{\pg_1}(n+1-m)^{2+\pg_1}}\right)\left\|\hg\right\|_{\ell^1_\Gamma}.
		\end{split}\label{Recu:1:PR2:Interm1}
	\end{align}
	The triplet $\left(\pg_1,2+\pg_1,\pg_1\right)$ verifies the condition \eqref{cond:H}. Thus, using Lemma \ref{lem:InSum}, we have that:
	\begin{equation}\label{Recu:1:PR2:Interm2}
		\sum_{m=0}^{\lfloor\frac{n+1}{2}\rfloor} \frac{1}{(m+1)^{\pg_1}(n+1-m)^{2+\pg_1}} \leq \frac{C_I\left(\pg_1,2+\pg_1,\pg_1\right)}{(n+2)^{\pg_1}}.
	\end{equation}
	Thus, combining  \eqref{Recu:1:PR2:Interm1} and \eqref{Recu:1:PR2:Interm2} and recalling the constant $C_1^A$ is defined by \eqref{def:C1A}, we have proved \eqref{Recu:1:PR2}.
	
	\underline{$\star$ \textit{Estimate \eqref{Recu:1:PR3} on $\sum_{m=\lfloor\frac{n+1}{2}\rfloor+1}^{n}\Ldsp^{n-m}\left(\LdspD-\Ldsp\right)h^m$ in $\ell^1_{\gamma_1}$:} }
	
	The proof of \eqref{Recu:1:PR3} is similar to the proof of \eqref{Recu:1:PR2}. Using a similar proof as for \eqref{Recu:1:PR2:Interm1}, we have that:
	\begin{equation}\label{Recu:1:PR3:Interm1}
		\left\|\sum_{m=\lfloor\frac{n+1}{2}\rfloor+1}^{n}\Ldsp^{n-m}\left(\LdspD-\Ldsp\right)h^m\right\|_{\ell^1_{\gamma_1}}  \leq \varepsilon C_0C_L(\gamma_1,2+\pg_1)C_\delta\left(\sum_{m=\lfloor\frac{n+1}{2}\rfloor+1}^{n}\frac{1}{(m+1)^{\pg_1}(n+1-m)^{2+\pg_1}}\right)\left\|\hg\right\|_{\ell^1_\Gamma}.
	\end{equation}
	The triplet $\left(2+\pg_1,\pg_1,\pg_1\right)$ verifies the condition \eqref{cond:H}. Thus using Lemma \ref{lem:InSum}, we have that:
	\begin{equation}\label{Recu:1:PR3:Interm2}
		\sum_{m=\lfloor\frac{n+1}{2}\rfloor+1}^{n} \frac{1}{(m+1)^{\pg_1}(n+1-m)^{2+\pg_1}} \leq \frac{C_I\left(2+\pg_1,\pg_1,\pg_1\right)}{(n+2)^{\pg_1}}.
	\end{equation}
	Thus, combining  \eqref{Recu:1:PR3:Interm1} and \eqref{Recu:1:PR3:Interm2} and recalling the constant $C_1^B$ is defined by \eqref{def:C1B}, we have proved \eqref{Recu:1:PR3}.
	
	\underline{$\star$ \textit{Estimate \eqref{Recu:1:PR4} on $\sum_{m=0}^{\lfloor\frac{n+1}{2}\rfloor}\Ldsp^{n-m}(Id-\Tc)\QD(h^m)$ in $\ell^1_{\gamma_1}$:} }
	
	Using the estimate \eqref{prop:Est:L^n(id-T):1,1} of Proposition \ref{prop:Est}, we have that:
	\begin{equation}\label{Recu:1:PR4:Interm1}
		\left\|\sum_{m=0}^{\lfloor\frac{n+1}{2}\rfloor}\Ldsp^{n-m}(Id-\Tc)\QD(h^m)\right\|_{\ell^1_{\gamma_1}} \leq \sum_{m=0}^{\lfloor\frac{n+1}{2}\rfloor} \frac{C_\Lcc(\gamma_1,\gamma_1+\gamma_\infty)}{(n+1-m)^{\gamma_\infty+\frac{1}{2\mu}}}\left\|\QD(h^m)\right\|_{\ell^1_{\gamma_1+\gamma_\infty}}.
	\end{equation}
	Furthermore, for $m\in\lbrace0,\hdots,n\rbrace$, using the inequality \eqref{lem:inQ:ell1} on the sequence $\QD(h^m)$, the inequalities \eqref{Recu:1} and \eqref{Recu:infty} on the sequence $h^m$ and finally the inequality \eqref{in:hg}, we have that:
	\begin{equation}\label{Recu:1:PR4:Interm2}
		\left\|\QD(h^m)\right\|_{\ell^1_{\gamma_1+\gamma_\infty}}\leq C_{Q,1}\left(\gamma_1,\gamma_\infty\right)\left\|h^m\right\|_{\ell^1_{\gamma_1}}\left\|h^m\right\|_{\ell^\infty_{\gamma_\infty}}\leq \frac{C_{Q,1}\left(\gamma_1,\gamma_\infty\right){C_0}^2}{(m+1)^{\pg_1+\pg_\infty}}{\left\|\hg\right\|_{\ell^1_\Gamma}}^2\leq \frac{\varepsilon C_{Q,1}\left(\gamma_1,\gamma_\infty\right){C_0}^2}{(m+1)^{\pg_1+\pg_\infty}}\left\|\hg\right\|_{\ell^1_\Gamma}.
	\end{equation}
	Combining \eqref{Recu:1:PR4:Interm1} and \eqref{Recu:1:PR4:Interm2}, we have that:
	\begin{multline}\label{Recu:1:PR4:Interm3}
		\left\|\sum_{m=0}^{\lfloor\frac{n+1}{2}\rfloor}\Ldsp^{n-m}(Id-\Tc)\QD(h^m)\right\|_{\ell^1_{\gamma_1}} \\ \leq \varepsilon C_\Lcc(\gamma_1,\gamma_1+\gamma_\infty) C_{Q,1}\left(\gamma_1,\gamma_\infty\right){C_0}^2\left(\sum_{m=0}^{\lfloor\frac{n+1}{2}\rfloor} \frac{1}{(m+1)^{\pg_1+\pg_\infty}(n+1-m)^{\gamma_\infty+\frac{1}{2\mu}}}\right)\left\|\hg\right\|_{\ell^1_\Gamma}.
	\end{multline}
	Since the condition \ref{Th:C1} presented in the statement of Theorem \ref{Th} states that the triplet $\left(\pg_1+\pg_\infty,\gamma_\infty+\frac{1}{2\mu},\pg_1\right)$ verifies the condition \eqref{cond:H}. Thus, using Lemma \ref{lem:InSum}, we have that:
	\begin{equation}\label{Recu:1:PR4:Interm4}
		\sum_{m=0}^{\lfloor\frac{n+1}{2}\rfloor} \frac{1}{(m+1)^{\pg_1+\pg_\infty}(n+1-m)^{\gamma_\infty+\frac{1}{2\mu}}} \leq \frac{C_I\left(\pg_1+\pg_\infty,\gamma_\infty+\frac{1}{2\mu},\pg_1\right)}{(n+2)^{\pg_1}}.
	\end{equation}
	Thus, combining  \eqref{Recu:1:PR4:Interm3} and \eqref{Recu:1:PR4:Interm4} and recalling the constant $C_1^C$ is defined by \eqref{def:C1C}, we have proved \eqref{Recu:1:PR4}.
	
	\underline{$\star$ \textit{Estimate \eqref{Recu:1:PR5} on $\sum_{m=\lfloor\frac{n+1}{2}\rfloor+1}^{n}\Ldsp^{n-m}(Id-\Tc)\QD(h^m)$ in $\ell^1_{\gamma_1}$:} }
	
	The proof of \eqref{Recu:1:PR5} is similar to the proof of \eqref{Recu:1:PR4}. Using a similar proof as for \eqref{Recu:1:PR4:Interm3}, we have that:
	\begin{multline}\label{Recu:1:PR5:Interm1}
		\left\|\sum_{m=\lfloor\frac{n+1}{2}\rfloor+1}^{n}\Ldsp^{n-m}(Id-\Tc)\QD(h^m)\right\|_{\ell^1_{\gamma_1}} \\ \leq \varepsilon C_\Lcc(\gamma_1,\gamma_1+\gamma_\infty) C_{Q,1}\left(\gamma_1,\gamma_\infty\right){C_0}^2\left(\sum_{m=\lfloor\frac{n+1}{2}\rfloor+1}^{n} \frac{1}{(m+1)^{\pg_1+\pg_\infty}(n+1-m)^{\gamma_\infty+\frac{1}{2\mu}}}\right)\left\|h^0\right\|_{\ell^1_\Gamma}.
	\end{multline}
	Since the condition \ref{Th:C2} presented in the statement of Theorem \ref{Th} state that the triplet $\left(\gamma_\infty+\frac{1}{2\mu},\pg_1+\pg_\infty,\pg_1\right)$ verifies the condition \eqref{cond:H}. Thus using Lemma \ref{lem:InSum}, we have that:
	\begin{equation}\label{Recu:1:PR5:Interm2}
		\sum_{m=\lfloor\frac{n+1}{2}\rfloor+1}^{n} \frac{1}{(m+1)^{\pg_1+\pg_\infty}(n+1-m)^{\gamma_\infty+\frac{1}{2\mu}}} \leq \frac{C_I\left(\gamma_\infty+\frac{1}{2\mu},\pg_1+\pg_\infty,\pg_1\right)}{(n+2)^{\pg_1}}.
	\end{equation}
	Thus, combining  \eqref{Recu:1:PR5:Interm1} and \eqref{Recu:1:PR5:Interm2} and recalling the constant $C_1^D$ is defined by \eqref{def:C1D}, we have proved \eqref{Recu:1:PR5}.
	
	This concludes the proof of the inequalities \eqref{Recu:1:PR}. Just as stated right after those inequalities, this allows us to conclude the proof of \eqref{Recu:1} for $m=n+1$.
	
	\vspace{0.2cm}\noindent\underline{$\bullet$ \textbf{Proof of \eqref{Recu:infty} for $m=n+1$:}}
	
	The proof of \eqref{Recu:infty} for $m=n+1$ will be fairly similar to the proof of \eqref{Recu:1} for $m=n+1$. We need to prove the following bounds where the constants $C_2^A$, $C_2^B$, $C_2^C$ and $C_2^D$ are defined by \eqref{def:C2A}, \eqref{def:C2B} and \eqref{def:C2CD}:
	\begin{subequations}\label{Recu:infty:PR}
		\begin{align}
			\left\|\Ldsp^{n+1}h^0\right\|_{\ell^\infty_{\gamma_\infty}} & \leq \frac{C_\Lcc(\gamma_\infty,\Gamma)(C_M+1)}{(n+2)^{\pg_\infty}}\left\|\hg\right\|_{\ell^1_{\Gamma}}, \label{Recu:infty:PR1}\\
			\left\|\sum_{m=0}^{\lfloor\frac{n+1}{2}\rfloor}\Ldsp^{n-m}\left(\LdspD-\Ldsp\right)h^m\right\|_{\ell^\infty_{\gamma_\infty}} & \leq \frac{\varepsilon C_2^A}{(n+2)^{\pg_\infty}}\left\|\hg\right\|_{\ell^1_{\Gamma}},\label{Recu:infty:PR2}\\
			\left\|\sum_{m=\lfloor\frac{n+1}{2}\rfloor+1}^{n}\Ldsp^{n-m}\left(\LdspD-\Ldsp\right)h^m\right\|_{\ell^\infty_{\gamma_\infty}} & \leq \frac{\varepsilon C_2^B}{(n+2)^{\pg_\infty}}\left\|\hg\right\|_{\ell^1_{\Gamma}},\label{Recu:infty:PR3}\\
			\left\|\sum_{m=0}^{\lfloor\frac{n+1}{2}\rfloor}\Ldsp^{n-m}(Id-\Tc)\QD(h^m)\right\|_{\ell^\infty_{\gamma_\infty}} &\leq \frac{\varepsilon C_2^C}{(n+2)^{\pg_\infty}}\left\|\hg\right\|_{\ell^1_{\Gamma}},\label{Recu:infty:PR4}\\
			\left\|\sum_{m=\lfloor\frac{n+1}{2}\rfloor+1}^{n}\Ldsp^{n-m}(Id-\Tc)\QD(h^m)\right\|_{\ell^\infty_{\gamma_\infty}} &\leq \frac{\varepsilon C_2^D}{(n+2)^{\pg_\infty}}\left\|\hg\right\|_{\ell^1_{\Gamma}}.\label{Recu:infty:PR5}
		\end{align}
	\end{subequations}
	We observe that once the inequalities \eqref{Recu:infty:PR} will have been proved, using the equality \eqref{expression:hn+1} on the sequence $h^{n+1}$, we have that:
	\begin{align*}
		\left\|h^{n+1}\right\|_{\ell^\infty_{\gamma_\infty}} \leq& \left\|\Ldsp^{n+1}h^0\right\|_{\ell^\infty_{\gamma_\infty}}+\left\|\sum_{m=0}^{\lfloor\frac{n+1}{2}\rfloor}\Ldsp^{n-m}\left(\LdspD-\Ldsp\right)h^m\right\|_{\ell^\infty_{\gamma_\infty}}+\left\|\sum_{m=\lfloor\frac{n+1}{2}\rfloor+1}^{n}\Ldsp^{n-m}\left(\LdspD-\Ldsp\right)h^m\right\|_{\ell^\infty_{\gamma_\infty}}\\
		&+\left\|\sum_{m=0}^{\lfloor\frac{n+1}{2}\rfloor}\Ldsp^{n-m}(Id-\Tc)\QD(h^m)\right\|_{\ell^\infty_{\gamma_\infty}}+ \left\|\sum_{m=\lfloor\frac{n+1}{2}\rfloor+1}^{n}\Ldsp^{n-m}(Id-\Tc)\QD(h^m)\right\|_{\ell^\infty_{\gamma_\infty}}\\ \leq&\frac{C_\Lcc(\gamma_\infty,\Gamma)(C_M+1)+ \varepsilon C_2}{(n+2)^{\pg_\infty}}\left\|\hg\right\|_{\ell^1_{\Gamma}}
	\end{align*}
	where the constant $C_2$ is defined by \eqref{def:C2}. Using the condition \eqref{in:varepsC2}, we will thus obtain \eqref{Recu:infty} for $m=n+1$. Let us now prove the inequalities \eqref{Recu:infty:PR}.

	\underline{$\star$ \textit{Estimate \eqref{Recu:infty:PR1} on $\Ldsp^{n+1}h^0$ in $\ell^\infty_{\gamma_\infty}$:} }
	
	The definition \eqref{def:GammaPreuve} of the constant $\Gamma$ implies that $\Gamma\geq \gamma_\infty$. This will allow us below to use the estimate \eqref{prop:Est:L^n:infty} of Proposition \ref{prop:Est}. Furthermore, the definition \eqref{def:GammaPreuve} of the constant $\Gamma$ implies that: 
	$$\Gamma\geq \pg_\infty+\gamma_\infty -\min\left(\gamma_\infty,\frac{1}{2\mu}\right).$$
	We also recall that we proved in the initialization step that the sequence $h^0$ belongs to $\E_\Gamma$. Therefore, combining the observations above with the estimate \eqref{in:h0} on the sequence $h^0$, we have that:
	\begin{equation*}
		\left\|\Ldsp^{n+1}h^0\right\|_{\ell^\infty_{\gamma_\infty}} \leq\frac{C_\Lcc(\gamma_\infty,\Gamma)}{(n+2)^{\Gamma-\gamma_\infty				+\min\left(\gamma_\infty,\frac{1}{2\mu}\right)}}\left\|h^0\right\|_{\ell^1_{\Gamma}}\leq \frac{C_\Lcc(\gamma_\infty,\Gamma)}{(n+2)^{\pg_\infty}}\left\|h^0\right\|_{\ell^1_{\Gamma}}\leq \frac{C_\Lcc(\gamma_\infty,\Gamma)(C_M+1)}{(n+2)^{\pg_\infty}}\left\|\hg\right\|_{\ell^1_{\Gamma}}.
	\end{equation*}
	
	\underline{$\star$ \textit{Estimate \eqref{Recu:infty:PR2} on $\sum_{m=0}^{\lfloor\frac{n+1}{2}\rfloor}\Ldsp^{n-m}\left(\LdspD-\Ldsp\right)h^m$ in $\ell^\infty_{\gamma_\infty}$:} }
	
	Using the estimate \eqref{prop:Est:L^n(LD-L)} of Proposition \ref{prop:Est}, the estimate \eqref{in:delta} on $\delta$ and the estimate \eqref{Recu:infty} for $m\in \left\{0,\hdots, \lfloor\frac{n+1}{2}\rfloor\right\}$, we have that:
	\begin{align}
		\begin{split}
			\left\|\sum_{m=0}^{\lfloor\frac{n+1}{2}\rfloor}\Ldsp^{n-m}\left(\LdspD-\Ldsp\right)h^m\right\|_{\ell^\infty_{\gamma_\infty}} & \leq \left\|\sum_{m=0}^{\lfloor\frac{n+1}{2}\rfloor}\Ldsp^{n-m}\left(\LdspD-\Ldsp\right)h^m\right\|_{\ell^1_{\gamma_\infty}}\\
			&\leq \sum_{m=0}^{\lfloor\frac{n+1}{2}\rfloor} \frac{C_L(\gamma_\infty,2+\pg_\infty)|\delta|}{(n+1-m)^{2+\pg_\infty}}\underset{\leq\left\|h^m\right\|_{\ell^\infty_{\gamma_\infty}}}{\underbrace{\left\|h^m\right\|_{\ell^\infty}}}\\
			& \leq \varepsilon C_0C_L(\gamma_\infty,2+\pg_\infty)C_\delta\left(\sum_{m=0}^{\lfloor\frac{n+1}{2}\rfloor}\frac{1}{(m+1)^{\pg_\infty}(n+1-m)^{2+\pg_\infty}}\right)\left\|\hg\right\|_{\ell^1_\Gamma}.
		\end{split}\label{Recu:infty:PR2:Interm1}
	\end{align}
	The triplet $\left(\pg_\infty,2+\pg_\infty,\pg_\infty\right)$ verifies the condition \eqref{cond:H}. Thus, using Lemma \ref{lem:InSum}, we have that:
	\begin{equation}\label{Recu:infty:PR2:Interm2}
		\sum_{m=0}^{\lfloor\frac{n+1}{2}\rfloor} \frac{1}{(m+1)^{\pg_\infty}(n+1-m)^{2+\pg_\infty}} \leq \frac{C_I\left(\pg_\infty,2+\pg_\infty,\pg_\infty\right)}{(n+2)^{\pg_\infty}}.
	\end{equation}
	Thus, combining  \eqref{Recu:infty:PR2:Interm1} and \eqref{Recu:infty:PR2:Interm2} and recalling the constant $C_2^A$ is defined by \eqref{def:C2A}, we have proved \eqref{Recu:infty:PR2}.
	
	\underline{$\star$ \textit{Estimate \eqref{Recu:infty:PR3} on $\sum_{m=\lfloor\frac{n+1}{2}\rfloor+1}^{n}\Ldsp^{n-m}\left(\LdspD-\Ldsp\right)h^m$ in $\ell^\infty_{\gamma_\infty}$:} }
	
	The proof of \eqref{Recu:infty:PR3} is similar to the proof of \eqref{Recu:infty:PR2}. Using a similar proof as for \eqref{Recu:infty:PR2:Interm1}, we have that:
	\begin{multline}\label{Recu:infty:PR3:Interm1}
		\left\|\sum_{m=\lfloor\frac{n+1}{2}\rfloor+1}^{n}\Ldsp^{n-m}\left(\LdspD-\Ldsp\right)h^m\right\|_{\ell^\infty_{\gamma_\infty}} \\  \leq \varepsilon C_0C_L(\gamma_\infty,2+\pg_\infty)C_\delta\left(\sum_{m=\lfloor\frac{n+1}{2}\rfloor+1}^{n}\frac{1}{(m+1)^{\pg_\infty}(n+1-m)^{2+\pg_\infty}}\right)\left\|\hg\right\|_{\ell^1_\Gamma}.
	\end{multline}
	The triplet $\left(2+\pg_\infty,\pg_\infty,\pg_\infty\right)$ verifies the condition \eqref{cond:H}. Thus using Lemma \ref{lem:InSum}, we have that:
	\begin{equation}\label{Recu:infty:PR3:Interm2}
		\sum_{m=\lfloor\frac{n+1}{2}\rfloor+1}^{n} \frac{1}{(m+1)^{\pg_\infty}(n+1-m)^{2+\pg_\infty}} \leq \frac{C_I\left(2+\pg_\infty,\pg_\infty,\pg_\infty\right)}{(n+2)^{\pg_\infty}}.
	\end{equation}
	Thus, combining  \eqref{Recu:infty:PR3:Interm1} and \eqref{Recu:infty:PR3:Interm2} and recalling the constant $C_2^B$ is defined by \eqref{def:C2B}, we have proved \eqref{Recu:infty:PR3}.

	\underline{$\star$ \textit{Estimate \eqref{Recu:infty:PR4} on $\sum_{m=0}^{\lfloor\frac{n+1}{2}\rfloor}\Ldsp^{n-m}(Id-\Tc)\QD(h^m)$ in $\ell^\infty_{\gamma_\infty}$:} }
	
	To obtain the estimate \eqref{Recu:infty:PR4}, we will use the condition \ref{Th:C3} of the statement of Theorem \ref{Th}. We recall that this condition has two ways of being verified since at least one of the two triplets 
	$$\left(\pg_1+\pg_\infty,\gamma_1+\frac{1}{2\mu}+\min\left(\gamma_\infty,\frac{1}{2\mu}\right),\pg_\infty\right)\text{ and }\left(2\pg_\infty,\gamma_\infty+\min\left(\gamma_\infty,\frac{1}{2\mu}\right),\pg_\infty\right)$$
	verifies the condition \eqref{cond:H}. We will separate both possibilities of satisfying the condition \ref{Th:C3} of the statement of Theorem \ref{Th}.
	
	$\bullet$ The first possibility is that the triplet $\left(\pg_1+\pg_\infty,\gamma_1+\frac{1}{2\mu}+\min\left(\gamma_\infty,\frac{1}{2\mu}\right),\pg_\infty\right)$ verifies condition \eqref{cond:H}. Using the estimate \eqref{prop:Est:L^n(id-T):infty,1} of Proposition \ref{prop:Est}, we have that:
	\begin{equation}\label{Recu:infty:PR4:Interm1}
		\left\|\sum_{m=0}^{\lfloor\frac{n+1}{2}\rfloor}\Ldsp^{n-m}(Id-\Tc)\QD(h^m)\right\|_{\ell^\infty_{\gamma_\infty}} \leq \sum_{m=0}^{\lfloor\frac{n+1}{2}\rfloor} \frac{C_\Lcc(\gamma_\infty,\gamma_1+\gamma_\infty)}{(n+1-m)^{\gamma_1+\frac{1}{2\mu}+\min\left(\gamma_\infty,\frac{1}{2\mu}\right)}}\left\|\QD(h^m)\right\|_{\ell^1_{\gamma_1+\gamma_\infty}}.
	\end{equation}
	Furthermore, combining \eqref{Recu:infty:PR4:Interm1} and \eqref{Recu:1:PR4:Interm2}, we have that:
	\begin{multline}\label{Recu:infty:PR4:Interm2}
		\left\|\sum_{m=0}^{\lfloor\frac{n+1}{2}\rfloor}\Ldsp^{n-m}(Id-\Tc)\QD(h^m)\right\|_{\ell^\infty_{\gamma_\infty}} \\ \leq \varepsilon C_\Lcc(\gamma_\infty,\gamma_1+\gamma_\infty) C_{Q,1}\left(\gamma_1,\gamma_\infty\right){C_0}^2\left(\sum_{m=0}^{\lfloor\frac{n+1}{2}\rfloor} \frac{1}{(m+1)^{\pg_1+\pg_\infty}(n+1-m)^{\gamma_1+\frac{1}{2\mu}+\min\left(\gamma_\infty,\frac{1}{2\mu}\right)}}\right)\left\|\hg\right\|_{\ell^1_\Gamma}.
	\end{multline}
	Since we considered that the triplet $\left(\pg_1+\pg_\infty,\gamma_1+\frac{1}{2\mu}+\min\left(\gamma_\infty,\frac{1}{2\mu}\right),\pg_\infty\right)$ verifies condition \eqref{cond:H}, using Lemma \ref{lem:InSum}, we have that:
	\begin{multline*}
		\left\|\sum_{m=0}^{\lfloor\frac{n+1}{2}\rfloor}\Ldsp^{n-m}(Id-\Tc)\QD(h^m)\right\|_{\ell^\infty_{\gamma_\infty}} \\ \leq \frac{\varepsilon C_\Lcc(\gamma_\infty,\gamma_1+\gamma_\infty) C_{Q,1}\left(\gamma_1,\gamma_\infty\right){C_0}^2C_I\left(\pg_1+\pg_\infty,\gamma_1+\frac{1}{2\mu}\min\left(\gamma_\infty,\frac{1}{2\mu}\right),\pg_\infty\right)}{(n+2)^{\pg_\infty}}\left\|\hg\right\|_{\ell^1_\Gamma}.
	\end{multline*}
	When considering the condition \eqref{def:C2C:1} on the constant $C_2^C$, we thus find \eqref{Recu:infty:PR4}.
	
	$\bullet$ The second possibility is that the triplet $\left(2\pg_\infty,\gamma_\infty+\min\left(\gamma_\infty,\frac{1}{2\mu}\right),\pg_\infty\right)$ verifies condition \eqref{cond:H}. Using the estimate \eqref{prop:Est:L^n(id-T):infty,infty} of Proposition \ref{prop:Est}, we have that:
	\begin{equation}\label{Recu:infty:PR4:Interm3}
		\left\|\sum_{m=0}^{\lfloor\frac{n+1}{2}\rfloor}\Ldsp^{n-m}(Id-\Tc)\QD(h^m)\right\|_{\ell^\infty_{\gamma_\infty}} \leq \sum_{m=0}^{\lfloor\frac{n+1}{2}\rfloor} \frac{C_\Lcc(\gamma_\infty,2\gamma_\infty)}{(n+1-m)^{\gamma_\infty+\min\left(\gamma_\infty,\frac{1}{2\mu}\right)}}\left\|\QD(h^m)\right\|_{\ell^\infty_{2\gamma_\infty}}.
	\end{equation}
	
	Furthermore, for $m\in\lbrace0,\hdots,n\rbrace$, using the inequality \eqref{lem:inQ:ellinfty} on the sequence $\QD(h^m)$, the inequality \eqref{Recu:infty} on the sequence $h^m$ and finally the inequality \eqref{in:hg}, we have that:
	\begin{equation}\label{Recu:infty:PR4:Interm4}
		\left\|\QD(h^m)\right\|_{\ell^\infty_{2\gamma_\infty}}\leq C_{Q,\infty}\left(\gamma_\infty\right){\left\|h^m\right\|_{\ell^\infty_{\gamma_\infty}}}^2 \leq \frac{C_{Q,\infty}\left(\gamma_\infty\right){C_0}^2}{m^{2\pg_\infty}}{\left\|\hg\right\|_{\ell^1_\Gamma}}^2\leq \frac{\varepsilon C_{Q,\infty}\left(\gamma_\infty\right){C_0}^2}{m^{2\pg_\infty}}\left\|\hg\right\|_{\ell^1_\Gamma}.
	\end{equation}
	Thus, combining \eqref{Recu:infty:PR4:Interm3} and \eqref{Recu:infty:PR4:Interm4}, we have that:
	\begin{multline}\label{Recu:infty:PR4:Interm5}
		\left\|\sum_{m=0}^{\lfloor\frac{n+1}{2}\rfloor}\Ldsp^{n-m}(Id-\Tc)\QD(h^m)\right\|_{\ell^\infty_{\gamma_\infty}} \\ \leq \varepsilon C_\Lcc(\gamma_\infty,2\gamma_\infty) C_{Q,\infty}\left(\gamma_\infty\right){C_0}^2\left(\sum_{m=0}^{\lfloor\frac{n+1}{2}\rfloor} \frac{1}{(m+1)^{2\pg_\infty}(n+1-m)^{\gamma_\infty+\min\left(\gamma_\infty,\frac{1}{2\mu}\right)}}\right)\left\|\hg\right\|_{\ell^1_\Gamma}.
	\end{multline}
	Since we considered that the triplet $\left(2\pg_\infty,\gamma_\infty+\min\left(\gamma_\infty,\frac{1}{2\mu}\right),\pg_\infty\right)$ verifies condition \eqref{cond:H}, using Lemma \ref{lem:InSum}, we have that:
	\begin{multline*}
		\left\|\sum_{m=0}^{\lfloor\frac{n+1}{2}\rfloor}\Ldsp^{n-m}(Id-\Tc)\QD(h^m)\right\|_{\ell^\infty_{\gamma_\infty}} \\ \leq \frac{\varepsilon C_\Lcc(\gamma_\infty,2\gamma_\infty) C_{Q,\infty}\left(\gamma_\infty\right){C_0}^2C_I\left(2\pg_\infty,\gamma_\infty+\min\left(\gamma_\infty,\frac{1}{2\mu}\right),\pg_\infty\right)}{(n+2)^{\pg_\infty}}\left\|\hg\right\|_{\ell^1_\Gamma}.
	\end{multline*}
	When considering the condition \eqref{def:C2C:2} on the constant $C_2^C$, we thus find \eqref{Recu:infty:PR4}.

	\underline{$\star$ \textit{Estimate \eqref{Recu:infty:PR5} on $\sum_{m=\lfloor\frac{n+1}{2}\rfloor+1}^{n}\Ldsp^{n-m}(Id-\Tc)\QD(h^m)$ in $\ell^\infty_{\gamma_\infty}$:} }
	
	The proof is similar to the proof of the estimate \eqref{Recu:infty:PR4} on the term $\sum_{m=0}^{\lfloor\frac{n+1}{2}\rfloor}\Ldsp^{n-m}(Id-\Tc)\QD(h^m)$. There are two possibilities to separate with regards to condition \ref{Th:C4} of Theorem \ref{Th}.
	
	$\bullet$ The first possibility is that the triplet $\left(\gamma_1+\frac{1}{2\mu}+\min\left(\gamma_\infty,\frac{1}{2\mu}\right),\pg_1+\pg_\infty,\pg_\infty\right)$ verifies condition \eqref{cond:H}. In this case, using the exact same calculations as for the estimate \eqref{Recu:infty:PR4:Interm2}, we have:
	\begin{multline*}
		\left\|\sum_{m=\lfloor\frac{n+1}{2}\rfloor+1}^{n}\Ldsp^{n-m}(Id-\Tc)\QD(h^m)\right\|_{\ell^\infty_{\gamma_\infty}} \\ \leq \varepsilon C_\Lcc(\gamma_\infty,\gamma_1+\gamma_\infty) C_{Q,1}\left(\gamma_1,\gamma_\infty\right){C_0}^2\left(\sum_{m=\lfloor\frac{n+1}{2}\rfloor+1}^{n} \frac{1}{(m+1)^{\pg_1+\pg_\infty}(n+1-m)^{\gamma_1+\frac{1}{2\mu}+\min\left(\gamma_\infty,\frac{1}{2\mu}\right)}}\right)\left\|\hg\right\|_{\ell^1_\Gamma}.
	\end{multline*}
	Thus, using Lemma \ref{lem:InSum}, we have that:
	\begin{multline*}
		\left\|\sum_{m=\lfloor\frac{n+1}{2}\rfloor+1}^{n}\Ldsp^{n-m}(Id-\Tc)\QD(h^m)\right\|_{\ell^\infty_{\gamma_\infty}} \\ \leq \frac{\varepsilon C_\Lcc(\gamma_\infty,\gamma_1+\gamma_\infty) C_{Q,1}\left(\gamma_1,\gamma_\infty\right){C_0}^2C_I\left(\gamma_1+\frac{1}{2\mu}+\min\left(\gamma_\infty,\frac{1}{2\mu}\right),\pg_1+\pg_\infty,\pg_\infty\right)}{(n+2)^{\pg_\infty}}\left\|\hg\right\|_{\ell^1_\Gamma}.
	\end{multline*}
	When considering the condition \eqref{def:C2D:1} on the constant $C_2^D$, we thus find \eqref{Recu:infty:PR5}.
	
	$\bullet$ The second possibility is that the triplet $\left(\gamma_\infty+\min\left(\gamma_\infty,\frac{1}{2\mu}\right),2\pg_\infty,\pg_\infty\right)$ verifies condition \eqref{cond:H}. In this case, using the exact same calculations as for the estimate \eqref{Recu:infty:PR4:Interm5}, we have:
	\begin{multline*}
		\left\|\sum_{m=\lfloor\frac{n+1}{2}\rfloor+1}^{n}\Ldsp^{n-m}(Id-\Tc)\QD(h^m)\right\|_{\ell^\infty_{\gamma_\infty}} \\ \leq \varepsilon C_\Lcc(\gamma_\infty,2\gamma_\infty) C_{Q,\infty}\left(\gamma_\infty\right){C_0}^2\left(\sum_{m=\lfloor\frac{n+1}{2}\rfloor+1}^n \frac{1}{(m+1)^{2\pg_\infty}(n+1-m)^{\gamma_\infty+\min\left(\gamma_\infty,\frac{1}{2\mu}\right)}}\right)\left\|\hg\right\|_{\ell^1_\Gamma}.
	\end{multline*}
	Thus, using Lemma \ref{lem:InSum}, we have that:
	\begin{multline*}
		\left\|\sum_{m=\lfloor\frac{n+1}{2}\rfloor+1}^{n}\Ldsp^{n-m}(Id-\Tc)\QD(h^m)\right\|_{\ell^\infty_{\gamma_\infty}} \\ \leq  \frac{\varepsilon C_\Lcc(\gamma_\infty,2\gamma_\infty) C_{Q,\infty}\left(\gamma_\infty\right){C_0}^2C_I\left(\gamma_\infty+\min\left(\gamma_\infty,\frac{1}{2\mu}\right),2\pg_\infty,\pg_\infty\right)}{(n+2)^{\pg_\infty}}\left\|\hg\right\|_{\ell^1_\Gamma}.
	\end{multline*}
	When considering the condition \eqref{def:C2D:2} on the constant $C_2^D$, we thus find \eqref{Recu:infty:PR5}.
	
	This concludes the proof of the inequalities \eqref{Recu:infty:PR}. Just as stated right after those inequalities, this allows us to conclude the proof of \eqref{Recu:infty} for $m=n+1$.
	
	\noindent \underline{$\bullet$ \textbf{Proof of \eqref{Recu:delta} for $m=n+1$:}}
	
	We observe that the equality \eqref{Recu:infty} for $m=n+1$, the condition \eqref{in:hg} on the initial perturbation $\hg$ and the condition \eqref{in:varepsDelta} on $\varepsilon$ imply that:
	$$\left\|h^{n+1}\right\|_{\ell^\infty} \leq \left\|h^{n+1}\right\|_{\ell^\infty_{\gamma_\infty}} \leq \frac{C_0}{(n+2)^{\pg_\infty}}\left\|\hg\right\|_{\ell^1_{\Gamma}} <C_0\varepsilon<\Rayon.$$
	Therefore, we have \eqref{Recu:delta} for $m=n+1$. 
	
	We have thus proved that $\Pg(n+1)$ is true and have concluded the induction step. Therefore, by induction, $\Pg(n)$ is true for all $n\in\N$ and this allows us to conclude the proof of Theorem \ref{Th}.

	\section{Estimates on the operators \texorpdfstring{$\Ldsp^n$}{Ln}, \texorpdfstring{$\Ldsp^n(Id-\Tc)$}{Ln(I-T)} and \texorpdfstring{$\Ldsp^n(\LdspD-\Ldsp)$}{Ln(Ld-L)}}\label{sec:Preuve_prop:Est}

	The proof of Proposition \ref{prop:Est} can be seen as a fairly longer version of the proof of \cite[Theorem 2]{Coeuret2024d}. It will be separated in three sections: Section \ref{subsec:prop:Est:1} will be dedicated to proving the estimates \eqref{prop:Est:L^n:1} and \eqref{prop:Est:L^n:infty} on the operator $\Ldsp^n$, Section \ref{subsec:prop:Est:2} will tackle the estimates \eqref{prop:Est:L^n(id-T):1,1}, \eqref{prop:Est:L^n(id-T):infty,1} and \eqref{prop:Est:L^n(id-T):infty,infty} on the operator $\Ldsp^n(Id-\Tc)$ and finally Section \ref{subsec:prop:Est:3} will tackle the estimate \eqref{prop:Est:L^n(LD-L)} on the operator $\Ldsp^n(\LdspD-\Ldsp)$. Before beginning with the proofs, let us make some useful observations.
	
	\begin{itemize}
		\item We will fix the constant $c$ that appears in the decompositions \eqref{decompoGreen} and \eqref{decompoDerGreen} of the Green's function and its discrete derivative. This will allow us to introduce the constant $\tilde{c}>0$ defined by:
		\begin{equation}\label{def:ctilde}
			\tilde{c} := \frac{c}{2^\frac{2\mu}{\mu-1}}.
		\end{equation}
		Then, since the constant $\alpha^+$ is negative, we observe that :
		\begin{multline}\label{in:ctilde}
			\forall n\in\N,\forall j_0\in\N\cap \left[0,-\frac{n\alpha^+}{2}\right] ,\forall j\in\N,\\ \exp\left(-c\left(\frac{\left|n-\frac{j-j_0}{\alpha^+}\right|}{n^\frac{1}{2\mu}}\right)^{\frac{2\mu}{2\mu-1}}\right)\leq\exp\left(-c\left(\frac{\left|n+\frac{j_0}{\alpha^+}\right|}{n^\frac{1}{2\mu}}\right)^{\frac{2\mu}{2\mu-1}}\right)\leq \exp(-\tilde{c}n).
		\end{multline}
		where the constant $\tilde{c}$ is defined by \eqref{def:ctilde}.
		
		\item There exists a constant $C>0$ such that:
		\begin{equation}\label{in:sommeGauss}
			\forall n\in\N\backslash\lbrace0\rbrace,\quad \sum_{j\in\Z}\frac{1}{n^\frac{1}{2\mu}}\exp\left(-c\left(\frac{\left|n-\frac{j}{\alpha^+}\right|}{n^\frac{1}{2\mu}}\right)^\frac{2\mu}{2\mu-1}\right)<C
		\end{equation}
		where the constant $c$ has been fixed as stated above.
		
		\item We also observe that, for any given parameter $\gamma\in[0,+\infty[$, there exists a positive constant $C$ such that:
		\begin{equation}\label{in:cnj0j}
			\forall n\in\N,\forall j_0\in\Z,\forall j\in\Z,\quad j-j_0\in\lbrace-nq,\hdots,np\rbrace \quad \Rightarrow \quad \left(1+|j|\right)^\gamma\leq C\left(1+|j_0|\right)^\gamma\left(1+n\right)^\gamma.
		\end{equation}
	\end{itemize}
	
	\subsection{Proof of the estimates \texorpdfstring{\eqref{prop:Est:L^n:1}}{} and \texorpdfstring{\eqref{prop:Est:L^n:infty}}{} on the operator \texorpdfstring{$\Ldsp^n$}{Ln}}\label{subsec:prop:Est:1}
	
	We first observe that \eqref{lienLccGreenV1} and Lemma \ref{lemGreenTempo} imply that:
	\begin{equation}\label{lienLccGreen}
		\forall n\in\N,\forall h\in\ell^\infty(\Z),\quad \Ldsp^nh= \left(\sum_{j_0\in\Z}\ind_{j-j_0\in\lbrace-nq,\hdots,np\rbrace}\Gcc(n,j_0,j)h_{j_0}\right)_{j\in\Z}.
	\end{equation}
	
	Let us recall here that, in the introduction of the present paper, we introduced a decomposition \eqref{decompoGreen} of the Green's function in the scalar case. We have that there exists a constant $c>0$ such that for $j_0\in\Z$, $n\in\N\backslash\lbrace0\rbrace$ and $j\in\Z$ such that $j-j_0\in\lbrace-nq,\hdots,np\rbrace$, the Green's function can be decomposed in 4 parts:
	\begin{subequations}\label{decompoGreenImproved}
		\begin{align}
			\forall j_0\in\N,\quad  &\Gcc(n,j_0,j) = E^+(n,j_0)V(j)+ D^+(n,j_0,j) + R^+(n,j_0,j)+O(e^{-cn}),\\
			\forall j_0\in-\N\backslash\lbrace0\rbrace,\quad  &\Gcc(n,j_0,j) = E^-(n,j_0)V(j)+ D^-(n,j_0,j) + R^-(n,j_0,j)+O(e^{-cn}),
		\end{align}
	\end{subequations}
	where the sequence $V$ defined in \eqref{decompoGreen} is an eigenvector of the operator $\Lcc$ associated with the eigenvalue $1$ and the other terms are defined as follows for $n\in\N$, $j_0,j\in\Z$:
	\begin{itemize}
		\item The terms $E^\pm(n,j_0)$ correspond to the activation of the eigenvalue $1$ of the operator $\Lcc$ and is defined by:
		\begin{subequations}\label{ExpE}
			\begin{align}
				\forall j_0\in\N, \quad &  E^+(n,j_0) := C_EE_{2\mu}\left(\beta^+;\frac{n\alpha^+ +j_0}{n^\frac{1}{2\mu}}\right),\\
				\forall j_0\in-\N\backslash\lbrace0\rbrace, \quad & E^-(n,j_0) := C_EE_{2\mu}\left(\beta^-;\frac{-n\alpha^- -j_0}{n^\frac{1}{2\mu}}\right).
			\end{align}
		\end{subequations}
		\item The term $D^+(n,j_0,j)$ (resp. $D^-(n,j_0,j)$) corresponds to the diffusion wave which is incoming with respect to the shock and which is associated with the characteristic field of the right (resp. left) state :
		\begin{subequations}\label{ExpDpm}
			\begin{align}
				\forall j_0\in\N, \quad & D^+(n,j_0,j) = \ind_{j\geq0}O\left(\frac{1}{n^\frac{1}{2\mu}}\exp\left(-c\left(\frac{\left|n-\left(\frac{j-j_0}{\alpha^+}\right)\right|}{n^\frac{1}{2\mu}}\right)^\frac{2\mu}{2\mu-1}\right)\right),\label{ExpD+}\\
				\forall j_0\in-\N\backslash\lbrace0\rbrace, \quad & D^-(n,j_0,j) = \ind_{j\leq0}O\left(\frac{1}{n^\frac{1}{2\mu}}\exp\left(-c\left(\frac{\left|n-\left(\frac{j-j_0}{\alpha^-}\right)\right|}{n^\frac{1}{2\mu}}\right)^\frac{2\mu}{2\mu-1}\right)\right),
			\end{align}
			where the term $O$ is uniform with respect to $n$,$j_0$ and $j$.
		\end{subequations}
		
		\item The terms $R^\pm(n,j_0,j)$ correspond to a remainder term for the activation of the eigenvector associated with the eigenvalue $1$ of the operator $\Ldsp$:
		\begin{subequations}
			\begin{align}
				\forall j_0\in\N, \quad & R^+(n,j_0,j) =O\left(\frac{e^{-c|j|}}{n^\frac{1}{2\mu}}\exp\left(-c\left(\frac{\left|n+\frac{j_0}{\alpha^+}\right|}{n^\frac{1}{2\mu}}\right)^\frac{2\mu}{2\mu-1}\right)\right),\label{ExpR+}\\
				\forall j_0\in-\N\backslash\lbrace0\rbrace, \quad & R^-(n,j_0,j) =O\left(\frac{e^{-c|j|}}{n^\frac{1}{2\mu}}\exp\left(-c\left(\frac{\left|n+\frac{j_0}{\alpha^-}\right|}{n^\frac{1}{2\mu}}\right)^\frac{2\mu}{2\mu-1}\right)\right),
			\end{align}
			where the term $O$ is uniform with respect to $n$, $j_0$ and $j$.
		\end{subequations}
	\end{itemize} 
	
	Applying the decompositions \eqref{decompoGreenImproved} of the Green's function in the right-hand side term of \eqref{lienLccGreen}, we have that for $h\in\ell^\infty(\Z)$, $n\in\N\backslash\lbrace0\rbrace$ and $j\in\Z$:
	\begin{multline}\label{lienLccGreenImproved}
		(\Ldsp^nh)_j = T^+_D(h,n,j)+ T^-_D(h,n,j)+T^+_R(h,n,j)+ T^-_R(h,n,j)+ T_E(h,n,j) \\+\sum_{j_0\in\Z} \ind_{j-j_0\in\lbrace-nq,\hdots,np\rbrace} O(e^{-cn}) h_{j_0}
	\end{multline}
	where the terms $T^\pm_D(h,n,j)$, $T^\pm_R(h,n,j)$ and $T_E(h,n,j)$ are respectively defined by:
	\begin{subequations}
		\begin{align}
			T^+_D(h,n,j)&:= \sum_{j_0\in\N} \ind_{j-j_0\in\lbrace-nq,\hdots,np\rbrace} D^+(n,j_0,j)h_{j_0},\label{ExpTD+}\\
			T^-_D(h,n,j)&:= \sum_{j_0\in-\N\backslash\lbrace0\rbrace} \ind_{j-j_0\in\lbrace-nq,\hdots,np\rbrace} D^-(n,j_0,j)h_{j_0},\\
			T^+_R(h,n,j)&:= \sum_{j_0\in\N} \ind_{j-j_0\in\lbrace-nq,\hdots,np\rbrace} R^+(n,j_0,j)h_{j_0},\label{ExpTR+}\\
			T^-_R(h,n,j)&:= \sum_{j_0\in-\N\backslash\lbrace0\rbrace} \ind_{j-j_0\in\lbrace-nq,\hdots,np\rbrace} R^-(n,j_0,j)h_{j_0},\\
			T_E(h,n,j)& := \left(\sum_{j_0\in\N}\ind_{j-j_0\in\lbrace-nq,\hdots,np\rbrace}E^+(n,j_0)h_{j_0}+\sum_{j_0\in-\N\backslash\lbrace0\rbrace}\ind_{j-j_0\in\lbrace-nq,\hdots,np\rbrace}E^-(n,j_0)h_{j_0}\right)V(j).\label{ExpTE}
		\end{align}
	\end{subequations}
	
	In the following lemma, we will prove sharp estimates on the terms that appear in the right-hand side of \eqref{lienLccGreenImproved}. 
	\begin{lemma}\label{prop:Est:lem1}
		For $0\leq \gamma\leq \Gamma$, there exists a constant $C>0$ such that:
		
		$\bullet$ \textbf{\underline{First terms of the decomposition (the diffusion waves):}}
		\begin{subequations}\label{prop:Est:lem1:1}
			\begin{align}
				\begin{split}
					\forall n\in\N\backslash\lbrace0\rbrace,\forall h\in\ell^1_{\Gamma},\quad &\left\|\left(T^\pm_D(h,n,j)\right)_{j\in\Z}\right\|_{\ell^1_{\gamma}} \leq \frac{C}{n^{\Gamma-\gamma}} \left\|h\right\|_{\ell^1_{\Gamma}},
				\end{split}\label{prop:Est:lem1:1a}\\
				\begin{split}
					\forall n\in\N\backslash\lbrace0\rbrace,\forall h\in\ell^\infty_{\Gamma},\quad &\left\|\left(T^\pm_D(h,n,j)\right)_{j\in\Z}\right\|_{\ell^\infty_{\gamma}} \leq \frac{C}{n^{\Gamma-\gamma}} \left\|h\right\|_{\ell^\infty_{\Gamma}}
				\end{split}\label{prop:Est:lem1:1b}\\
				\begin{split}
					\forall n\in\N\backslash\lbrace0\rbrace,\forall h\in\ell^1_{\Gamma},\quad &\left\|\left(T^\pm_D(h,n,j)\right)_{j\in\Z}\right\|_{\ell^\infty_{\gamma}} \leq \frac{C}{n^{\Gamma-\gamma+\frac{1}{2\mu}}} \left\|h\right\|_{\ell^1_{\Gamma}}.
				\end{split}\label{prop:Est:lem1:1c}
			\end{align}
		\end{subequations}
		
		$\bullet$ \textbf{\underline{Second term of the decomposition (the activation remainder):}}
		\begin{subequations}\label{prop:Est:lem1:2}
			\begin{align}
				\begin{split}
					\forall n\in\N\backslash\lbrace0\rbrace,\forall h\in\ell^\infty_{\Gamma},\quad \left\|\left(T^\pm_R(h,n,j)\right)_{j\in\Z}\right\|_{\ell^1_{\gamma}}\leq \frac{C}{n^{\Gamma}} \left\|h\right\|_{\ell^\infty_{\Gamma}},
				\end{split}\label{prop:Est:lem1:2a}\\
				\begin{split}
					\forall n\in\N\backslash\lbrace0\rbrace,\forall h\in\ell^1_{\Gamma},\quad \left\|\left(T^\pm_R(h,n,j)\right)_{j\in\Z}\right\|_{\ell^1_{\gamma}}\leq \frac{C}{n^{\Gamma+\frac{1}{2\mu}}} \left\|h\right\|_{\ell^1_{\Gamma}}.
				\end{split}\label{prop:Est:lem1:2b}
			\end{align}
		\end{subequations}
		
		$\bullet$ \textbf{\underline{Third term of the decomposition:}}
		\begin{subequations}\label{prop:Est:lem1:3}
			\begin{align}
				\forall n\in\N\backslash\lbrace0\rbrace,\forall h\in\ell^1_{\Gamma},\quad &\left\|\left(\sum_{j_0\in\Z}\ind_{j-j_0\in\lbrace-nq,\hdots,np\rbrace}O(e^{-cn})h_{j_0}\right)_{j\in\Z}\right\|_{\ell^1_{\gamma}}\leq C e^{-\frac{c}{2}n}\left\|h\right\|_{\ell^1_{\Gamma}},\label{prop:Est:lem1:3a}\\
				\forall n\in\N\backslash\lbrace0\rbrace,\forall h\in\ell^\infty_{\Gamma}, \quad & \left\|\left(\sum_{j_0\in\Z}\ind_{j-j_0\in\lbrace-nq,\hdots,np\rbrace}O(e^{-cn})h_{j_0}\right)_{j\in\Z}\right\|_{\ell^\infty_{\gamma}}\leq C e^{-\frac{c}{2}n}\left\|h\right\|_{\ell^\infty_{\Gamma}}.\label{prop:Est:lem1:3b}
			\end{align}
		\end{subequations}	
		
		$\bullet$ \textbf{\underline{Fourth term of the decomposition (associated with the eigenvalue $1$ of $\Lcc$):}}
		\begin{equation}\label{prop:Est:lem1:4}
			\forall n\in\N\backslash\lbrace0\rbrace,\forall h\in\E_{\Gamma},\quad \left\|\left( T_E(h,n,j) \right)_{j\in\Z}\right\|_{\ell^1_{\gamma}} \leq \frac{C}{n^{\Gamma}}\left\|h\right\|_{\ell^1_{\Gamma}}.
		\end{equation}
	\end{lemma}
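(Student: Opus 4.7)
The plan is to prove the four groups of estimates by directly bounding the five kernels appearing in the decomposition \eqref{lienLccGreenImproved}. For each kernel, the pointwise description \eqref{decompoGreenImproved} gives a closed expression and the resulting sum over $j_0$ against $h_{j_0}$ is treated as an inhomogeneous convolution. The common trick is to redistribute the weights $(1+|j|)^\gamma$ and $(1+|j_0|)^\Gamma$ using \eqref{in:cnj0j}, and then to exploit that whenever the kernel is not exponentially small, $|j_0|$ is of order at least $n$; on that range one may replace the surplus factor $(1+|j_0|)^{\Gamma-\gamma}$ by $n^{\Gamma-\gamma}$, producing the polynomial gain in $n$.

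For the diffusion wave bounds \eqref{prop:Est:lem1:1}, I treat $T^+_D$; the $T^-_D$ case is symmetric. The kernel $D^+(n,j_0,j)$ from \eqref{ExpD+} is a generalized Gaussian in $j$ centered at $j_0+n\alpha^+$ of width $n^{1/(2\mu)}$, truncated to $j\geq 0$. Because $\alpha^+<0$, when $0\leq j_0\leq -n\alpha^+/2$ inequality \eqref{in:ctilde} forces the whole Gaussian factor below $\exp(-\tilde c n)$ and that regime contributes only exponentially small terms; when $j_0\geq -n\alpha^+/2$ we have $|j_0|\geq cn$, while Schur-type sums based on \eqref{in:sommeGauss} give $\sum_j|D^+(n,j_0,j)|\leq C$ and $\sup_j|D^+(n,j_0,j)|\leq Cn^{-1/(2\mu)}$. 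Combining these with $(1+|j_0|)^{\gamma-\Gamma}\leq Cn^{\gamma-\Gamma}$ on the latter range yields \eqref{prop:Est:lem1:1a} and \eqref{prop:Est:lem1:1b}, while the additional factor $n^{-1/(2\mu)}$ in \eqref{prop:Est:lem1:1c} comes from substituting the pointwise peak bound for the $\ell^1$ mass. The remainder estimates \eqref{prop:Est:lem1:2} follow an analogous route for $T^\pm_R$: the factor $e^{-c|j|}$ in \eqref{ExpR+} absorbs any polynomial weight in $j$ with a uniform constant, while the generalized Gaussian factor in $(n+j_0/\alpha^+)/n^{1/(2\mu)}$ forces $j_0$ to lie near $-n\alpha^+\sim n$, so once again the weight $(1+|j_0|)^{-\Gamma}$ contributes at most $Cn^{-\Gamma}$; the extra $n^{-1/(2\mu)}$ in \eqref{prop:Est:lem1:2b} is produced by summing the generalized Gaussian in $j_0$ against $(1+|j_0|)^\Gamma|h_{j_0}|$. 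The exponentially small estimates \eqref{prop:Est:lem1:3} follow immediately from Lemma \ref{lemGreenTempo}: the $j_0$-sum has at most $n(p+q)+1$ nonzero terms, so the factor $O(e^{-cn})$ absorbs everything up to $Ce^{-cn/2}$.

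The main obstacle is the eigenvalue activation bound \eqref{prop:Est:lem1:4}. Since $V$ is exponentially localized, $\|V\|_{\ell^1_\gamma}$ is finite and it suffices to prove the scalar estimate that the bracket in \eqref{ExpTE} is bounded by $Cn^{-\Gamma}\|h\|_{\ell^1_\Gamma}$ uniformly in $j$. Here the zero-mass hypothesis $h\in\E_\Gamma$ is essential: by \eqref{eq:E_en_-infty} both $E^+$ and $E^-$ tend to the common limit $C_E$ at the relevant endpoints, so a naive bound of the bracket is at best $O(\|h\|_{\ell^1})$ and does not decay in $n$. The plan is first to drop the indicators $\ind_{j-j_0\in\{-nq,\dots,np\}}$ at the cost of a tail $\sum_{|j_0|\geq cn}|h_{j_0}|\leq Cn^{-\Gamma}\|h\|_{\ell^1_\Gamma}$, and then to subtract $C_E\sum_{j_0\in\Z}h_{j_0}=0$ in order to rewrite the bracket as $\sum_{j_0\geq 0}(E^+(n,j_0)-C_E)h_{j_0}+\sum_{j_0<0}(E^-(n,j_0)-C_E)h_{j_0}$. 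The inequalities \eqref{inE+} and \eqref{inE-} then convert $|E^\pm(n,j_0)-C_E|$ into a generalized Gaussian factor with argument $(n\alpha^\pm\pm j_0)/n^{1/(2\mu)}$ concentrated at $|j_0|\sim n$, after which the same weight-redistribution argument as for $T^\pm_D$ and $T^\pm_R$ delivers the factor $n^{-\Gamma}$. Once Lemma \ref{prop:Est:lem1} is established, the bounds \eqref{prop:Est:L^n:1} and \eqref{prop:Est:L^n:infty} of Proposition \ref{prop:Est} follow by a direct Minkowski inequality on \eqref{lienLccGreenImproved}.
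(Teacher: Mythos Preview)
Your overall strategy matches the paper's: split each kernel into an exponentially small regime ($|j_0|\lesssim n$) and a main regime ($|j_0|\gtrsim n$), and extract the polynomial decay from $(1+|j_0|)^{-(\Gamma-\gamma)}\lesssim n^{-(\Gamma-\gamma)}$ on the latter. Two steps, however, do not work as written.

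First, the weight redistribution via \eqref{in:cnj0j} is too weak in the main regime. That inequality only gives $(1+|j|)^\gamma\leq C(1+|j_0|)^\gamma(1+n)^\gamma$, and the extra factor $(1+n)^\gamma$ survives the Schur bound $\sum_j|D^+(n,j_0,j)|\leq C$, so your argument for \eqref{prop:Est:lem1:1a} would yield only $n^{2\gamma-\Gamma}$ instead of $n^{\gamma-\Gamma}$. The paper avoids this by observing that in the main regime the kernel $D^+(n,j_0,j)$ is supported (up to an exponentially small tail coming from $j>j_0$, where $n-(j-j_0)/\alpha^+\geq n$) on $0\leq j\leq j_0$, so one may use the sharp bound $(1+|j|)^\gamma\leq(1+|j_0|)^\gamma$ directly. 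The inequality \eqref{in:cnj0j} is only used in the exponentially small regimes, where the $e^{-cn}$ absorbs the spurious $(1+n)^\gamma$. The same refinement is needed for \eqref{prop:Est:lem1:1b}, \eqref{prop:Est:lem1:1c} and, implicitly, for \eqref{prop:Est:lem1:2}.

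Second, for \eqref{prop:Est:lem1:4} the reduction ``it suffices to bound the bracket uniformly in $j$'' is false: dropping the indicator costs a tail $\sum_{|j_0-j|\gtrsim n}|h_{j_0}|$, which for $|j|\sim n$ includes all $j_0$ near zero and is of order $\|h\|_{\ell^1}$, not $n^{-\Gamma}\|h\|_{\ell^1_\Gamma}$. The exponential localization of $V$ must be used \emph{inside} this step (large $|j|$ contributes $e^{-c|j|}\|h\|_{\ell^1}$, small $|j|$ gives the genuine tail $\sum_{|j_0|\gtrsim n}|h_{j_0}|$), not merely to factor out $\|V\|_{\ell^1_\gamma}$ at the end. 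The paper handles this by bounding the complement-indicator piece directly in $\ell^1_\gamma$ via \cite[(2.10)]{Coeuret2023} before reducing to the $j$-independent scalar quantity.
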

	
	We point out that we exhibited estimates of some of the terms (for instance the second or fourth term) only in the $\ell^1_{\gamma}$-norm. Indeed, we can obtain estimates in the $\ell^\infty_{\gamma}$-norm by observing that:
	$$\forall h\in\ell^1_{\gamma},\quad \left\|h\right\|_{\ell^\infty_{\gamma}}\leq \left\|h\right\|_{\ell^1_{\gamma}}.$$ 
	Combining the results of Lemma \ref{prop:Est:lem1} and the equality \eqref{lienLccGreenImproved}, we can immediately obtain the inequalities \eqref{prop:Est:L^n:1} and \eqref{prop:Est:L^n:infty} on the semi-group $(\Ldsp^n)_{n\in\N}$. The result of Lemma \ref{prop:Est:lem1} is actually slightly more complete than what is necessary to prove \eqref{prop:Est:L^n:1} and \eqref{prop:Est:L^n:infty}. This completeness will be useful in the Section \ref{subsec:prop:Est:2} tackling the proofs of \eqref{prop:Est:L^n(id-T):1,1}, \eqref{prop:Est:L^n(id-T):infty,1} and \eqref{prop:Est:L^n(id-T):infty,infty}.
	
	\begin{proof}\textbf{of Lemma \ref{prop:Est:lem1}}
		
		We fix $0\leq \gamma\leq \Gamma$. We will separate the proofs of the different estimates claimed in Lemma \ref{prop:Est:lem1}.
		
		\underline{$\blacktriangleright$ \textbf{Proof of the estimates \eqref{prop:Est:lem1:1} on the first term (diffusion waves):}}
		
		In this part, we will only prove the estimates for the sequence $\left(T^+_D(h,n,j)\right)_{j\in\Z}$. Everything can then easily be extended for the sequence $\left(T^-_D(h,n,j)\right)_{j\in\Z}$. 
		
		First, using the expressions \eqref{ExpTD+} of $T_D^+(h,n,j)$ and \eqref{ExpD+} of $D^+(n,j_0,j)$, we observe that for $n\in\N\backslash\lbrace0\rbrace$, $h\in\ell^\infty_{\Gamma}$ and $j\in\Z$, we have:
		\begin{align}\label{prop:Est:lem1:1a:Overall}
			\begin{split}
				&(1+|j|)^{\gamma} \left|T_D^+(h,n,j)\right|  \\
				= & (1+|j|)^{\gamma} \left|\sum_{j_0\in\N}\ind_{j-j_0\in\lbrace-nq,\hdots,np\rbrace}\ind_{j\geq0}O\left(\frac{1}{n^\frac{1}{2\mu}}\exp\left(-c\left(\frac{\left|n-\frac{j-j_0}{\alpha^+}\right|}{n^\frac{1}{2\mu}}\right)^{\frac{2\mu}{2\mu-1}}\right)\right)h_{j_0}\right| \\ 
				\lesssim& \ind_{j\geq0} \sum_{j_0\in\N}\ind_{j-j_0\in\lbrace-nq,\hdots,np\rbrace}(1+|j|)^{\gamma}\frac{1}{n^\frac{1}{2\mu}}\exp\left(-c\left(\frac{\left|n-\frac{j-j_0}{\alpha^+}\right|}{n^\frac{1}{2\mu}}\right)^{\frac{2\mu}{2\mu-1}}\right)\left|h_{j_0}\right|
			\end{split}
		\end{align}
		where the notations $\lesssim$, introduced in the "Notations" paragraph of the introduction, is used to ease the reading and describes inequalities up to a multiplicative constant independent from the parameters $h$, $n$ and $j$.
		
		\underline{$\bullet$ \textit{Proof of the estimate \eqref{prop:Est:lem1:1a}}}
		
		\begin{subequations}
			We consider $h\in\ell^1_{\Gamma}$ and $n\in\N\backslash\lbrace0\rbrace$. To prove \eqref{prop:Est:lem1:1a}, we want to find estimates on the sum:
			$$\left\|\left(T_D^+(h,n,j)\right)_{j\in\Z}\right\|_{\ell^1_{\gamma}}=\sum_{j\in\Z}(1+|j|)^{\gamma} \left|T_D^+(h,n,j)\right|.$$
			We observe that the inequality \eqref{prop:Est:lem1:1a:Overall} implies that we only need to find bounds on 
			\begin{equation}\label{prop:Est:lem1:1a:1}
				\sum_{j\in\Z}\ind_{j\geq0} \sum_{j_0\in\N}\ind_{j-j_0\in\lbrace-nq,\hdots,np\rbrace}(1+|j|)^{\gamma}\frac{1}{n^\frac{1}{2\mu}}\exp\left(-c\left(\frac{\left|n-\frac{j-j_0}{\alpha^+}\right|}{n^\frac{1}{2\mu}}\right)^{\frac{2\mu}{2\mu-1}}\right)\left|h_{j_0}\right|.
			\end{equation}
			We will decompose the sum \eqref{prop:Est:lem1:1a:1} with respect to $j$ according to various regimes of $j-j_0$.
			
			$\star$ For $n\in\N\backslash\lbrace0\rbrace$ and $h\in\ell^1_{\Gamma}$, we have using the inequality \eqref{in:cnj0j} and the fact that $\alpha^+<0$:
			\begin{align}
				\begin{split}
					\hspace{3cm} \mathclap{\hspace{8cm}\sum_{j\in\Z}\ind_{j\geq0}\sum_{j_0=0}^j\ind_{j-j_0\in\lbrace-nq,\hdots,np\rbrace}(1+|j|)^{\gamma}\frac{1}{n^\frac{1}{2\mu}}\underset{\leq\exp(-cn)}{\underbrace{\exp\left(-c\left(\frac{\left|n-\frac{j-j_0}{\alpha^+}\right|}{n^\frac{1}{2\mu}}\right)^{\frac{2\mu}{2\mu-1}}\right)}}\left|h_{j_0}\right|}&\\
					&\lesssim \sum_{j_0\in\N} \underset{\lesssim n}{\underbrace{\sum_{j\geq j_0}\ind_{j-j_0\in\lbrace-nq,\hdots,np\rbrace}}}\frac{(1+n)^{\gamma}}{n^\frac{1}{2\mu}}  \exp(-cn)(1+|j_0|)^{\gamma}|h_{j_0}|\\
					&\lesssim \exp\left(-\frac{c}{2}n\right)\left\|h\right\|_{\ell^1_{\gamma}}\\
					&\lesssim \exp\left(-\frac{c}{2}n\right)\left\|h\right\|_{\ell^1_{\Gamma}}.
				\end{split}\label{prop:Est:lem1:1a:2}
			\end{align}
			
			$\star$ For $n\in\N\backslash\lbrace0\rbrace$ and $h\in\ell^1_{\Gamma}$, we have using the inequality \eqref{in:ctilde} and the fact that $\gamma\leq \Gamma$:
			\begin{align}
				\begin{split}
					\hspace{3cm} \mathclap{\hspace{8cm}\sum_{j\in\Z}\ind_{j\geq0}\sum_{j_0\geq j+1}\ind_{j_0\in\left[0,-\frac{n\alpha^+}{2}\right[}\ind_{j-j_0\in\lbrace-nq,\hdots,np\rbrace}\underset{\leq(1+|j_0|)^{\gamma}}{\underbrace{(1+|j|)^{\gamma}}}\frac{1}{n^\frac{1}{2\mu}}\exp\left(-c\left(\frac{\left|n-\frac{j-j_0}{\alpha^+}\right|}{n^\frac{1}{2\mu}}\right)^{\frac{2\mu}{2\mu-1}}\right)\left|h_{j_0}\right|} & \\
					&\lesssim \sum_{j_0\in\N}\ind_{j_0\in\left[0,-\frac{n\alpha^+}{2}\right[} \underset{\lesssim n}{\underbrace{\sum_{j=0}^{j_0-1}\ind_{j-j_0\in\lbrace-nq,\hdots,np\rbrace}}}\frac{1}{n^\frac{1}{2\mu}}  \exp(-\tilde{c}n)(1+|j_0|)^{\gamma}|h_{j_0}|\\
					&\lesssim \exp\left(-\frac{\tilde{c}}{2}n\right)\left\|h\right\|_{\ell^1_{\gamma}}\\
					&\lesssim \exp\left(-\frac{\tilde{c}}{2}n\right)\left\|h\right\|_{\ell^1_{\Gamma}}.
				\end{split}\label{prop:Est:lem1:1a:3}
			\end{align}
			
			$\star$ For $n\in\N\backslash\lbrace0\rbrace$ and $h\in\ell^1_{\Gamma}$, we have using the inequality \eqref{in:sommeGauss}:
			\begin{align}
				\begin{split}
					\hspace{2cm}\mathclap{\hspace{10cm}\sum_{j\in\Z}\ind_{j\geq0}\sum_{j_0\geq j+1}\ind_{j_0\in\left[-\frac{n\alpha^+}{2},+\infty\right[}\ind_{j-j_0\in\lbrace-nq,\hdots,np\rbrace}\underset{\leq(1+|j_0|)^{\gamma}}{\underbrace{(1+|j|)^{\gamma}}}\frac{1}{n^\frac{1}{2\mu}}\exp\left(-c\left(\frac{\left|n-\frac{j-j_0}{\alpha^+}\right|}{n^\frac{1}{2\mu}}\right)^{\frac{2\mu}{2\mu-1}}\right)\left|h_{j_0}\right|} & \\
					&\lesssim \sum_{j_0\in\N}\ind_{j_0\in\left[-\frac{n\alpha^+}{2},+\infty\right[} \underset{\lesssim1}{\underbrace{\left(\sum_{j=0}^{j_0-1}\frac{1}{n^\frac{1}{2\mu}} \exp\left(-c\left(\frac{\left|n-\frac{j-j_0}{\alpha^+}\right|}{n^\frac{1}{2\mu}}\right)^{\frac{2\mu}{2\mu-1}}\right)\right)}} \frac{(1+|j_0|)^{\Gamma}}{|j_0|^{\Gamma-\gamma}}|h_{j_0}|\\
					&\lesssim \frac{1}{n^{\Gamma-\gamma}}\left\|h\right\|_{\ell^1_{\Gamma}}.
				\end{split}\label{prop:Est:lem1:1a:4}
			\end{align}
			
			Using \eqref{prop:Est:lem1:1a:Overall} and \eqref{prop:Est:lem1:1a:2}-\eqref{prop:Est:lem1:1a:4} allows us to conclude the proof of \eqref{prop:Est:lem1:1a}.
		\end{subequations}
		
		\underline{$\bullet$ \textit{Proof of the estimate \eqref{prop:Est:lem1:1b}}}
		
		\begin{subequations}
			The proof of \eqref{prop:Est:lem1:1b} is fairly similar to \eqref{prop:Est:lem1:1a} with slight modifications made precise below. For $n\in\N\backslash\lbrace0\rbrace$ and $h\in \ell^\infty_{\Gamma}$, we want to prove bounds 
			$$\left\|\left(T_D^+(h,n,j)\right)_{j\in\Z}\right\|_{\ell^\infty_{\gamma}} = \sup_{j\in\Z}(1+|j|)^{\gamma}\left|T_D^+(h,n,j)\right| . $$
			The inequality \eqref{prop:Est:lem1:1a:Overall} thus leads us to decompose and study the sum	for $j\in\Z$
			\begin{equation}\label{prop:Est:lem1:1b:1}
				\ind_{j\geq0} \sum_{j_0\in\N}\ind_{j-j_0\in\lbrace-nq,\hdots,np\rbrace}(1+|j|)^{\gamma}\frac{1}{n^\frac{1}{2\mu}}\exp\left(-c\left(\frac{\left|n-\frac{j-j_0}{\alpha^+}\right|}{n^\frac{1}{2\mu}}\right)^{\frac{2\mu}{2\mu-1}}\right)\left|h_{j_0}\right|.
			\end{equation}
			
			$\star$ For $n\in\N\backslash\lbrace0\rbrace$, $h\in\ell^\infty_{\Gamma}$ and $j\in\N$, we have using \eqref{in:cnj0j}:
			\begin{align}
				\begin{split}
					&\sum_{j_0=0}^j\ind_{j-j_0\in\lbrace-nq,\hdots,np\rbrace}(1+|j|)^{\gamma}\frac{1}{n^\frac{1}{2\mu}}\underset{\leq\exp(-cn)}{\underbrace{\exp\left(-c\left(\frac{\left|n-\frac{j-j_0}{\alpha^+}\right|}{n^\frac{1}{2\mu}}\right)^{\frac{2\mu}{2\mu-1}}\right)}}\left|h_{j_0}\right|\\
					\lesssim& \underset{\lesssim n}{\underbrace{\sum_{j_0=0}^j\ind_{j-j_0\in\lbrace-nq,\hdots,np\rbrace}}}\frac{(1+n)^{\gamma}}{n^\frac{1}{2\mu}}  \exp(-cn) \underset{\leq \left\|h\right\|_{\ell^\infty_{\gamma}}\leq \left\|h\right\|_{\ell^\infty_{\Gamma}}}{\underbrace{(1+|j_0|)^{\gamma}|h_{j_0}|}}\\
					\lesssim &\exp\left(-\frac{c}{2}n\right)\left\|h\right\|_{\ell^\infty_{\Gamma}}.
				\end{split}\label{prop:Est:lem1:1b:2}
			\end{align}
			
			$\star$ For $n\in\N\backslash\lbrace0\rbrace$, $h\in\ell^\infty_{\Gamma}$ and $j\in\N$, we have using the inequality \eqref{in:ctilde}:
			\begin{align}
				\begin{split}
					\hspace{3cm}\mathclap{\hspace{8cm}\sum_{j_0\geq j+1}\ind_{j_0\in\left[0,-\frac{n\alpha^+}{2}\right[}\ind_{j-j_0\in\lbrace-nq,\hdots,np\rbrace}\underset{\leq(1+|j_0|)^{\gamma}}{\underbrace{(1+|j|)^{\gamma}}}\frac{1}{n^\frac{1}{2\mu}}\exp\left(-c\left(\frac{\left|n-\frac{j-j_0}{\alpha^+}\right|}{n^\frac{1}{2\mu}}\right)^{\frac{2\mu}{2\mu-1}}\right)\left|h_{j_0}\right|} & \\
					&\lesssim \underset{\lesssim n}{\underbrace{\sum_{j_0\geq j+1}\ind_{j_0\in\left[0,-\frac{n\alpha^+}{2}\right[}\ind_{j-j_0\in\lbrace-nq,\hdots,np\rbrace}}}\frac{1}{n^\frac{1}{2\mu}}  \exp(-\tilde{c}n)\underset{\leq\left\|h\right\|_{\ell^\infty_{\Gamma}}}{\underbrace{(1+|j_0|)^{\gamma}|h_{j_0}|}}\\
					&\lesssim \exp\left(-\frac{\tilde{c}}{2}n\right)\left\|h\right\|_{\ell^\infty_{\Gamma}}.
				\end{split}\label{prop:Est:lem1:1b:3}
			\end{align}
			
			$\star$ For $n\in\N\backslash\lbrace0\rbrace$, $h\in\ell^\infty_{\Gamma}$ and $j\in\N$, we have using \eqref{in:sommeGauss}:
			\begin{align}
				\begin{split}
					\hspace{2cm}\mathclap{\hspace{10cm}\sum_{j_0\geq j+1}\ind_{j_0\in\left[-\frac{n\alpha^+}{2},+\infty\right[}\ind_{j-j_0\in\lbrace-nq,\hdots,np\rbrace}\underset{\leq(1+|j_0|)^{\gamma}}{\underbrace{(1+|j|)^{\gamma}}}\frac{1}{n^\frac{1}{2\mu}}\exp\left(-c\left(\frac{\left|n-\frac{j-j_0}{\alpha^+}\right|}{n^\frac{1}{2\mu}}\right)^{\frac{2\mu}{2\mu-1}}\right)\left|h_{j_0}\right|} & \\
					&\lesssim \sum_{j_0\in\N}\ind_{j_0\in\left[-\frac{n\alpha^+}{2},+\infty\right[} \left(\frac{1}{n^\frac{1}{2\mu}}\exp\left(-c\left(\frac{\left|n-\frac{j-j_0}{\alpha^+}\right|}{n^\frac{1}{2\mu}}\right)^{\frac{2\mu}{2\mu-1}}\right)\right) \frac{(1+|j_0|)^{\Gamma}}{|j_0|^{\Gamma-\gamma}}|h_{j_0}|\\
					&\lesssim \underset{\lesssim1}{\underbrace{\sum_{j_0\in\N}\ind_{j_0\in\left[-\frac{n\alpha^+}{2},+\infty\right[} \left(\frac{1}{n^\frac{1}{2\mu}}\exp\left(-c\left(\frac{\left|n-\frac{j-j_0}{\alpha^+}\right|}{n^\frac{1}{2\mu}}\right)^{\frac{2\mu}{2\mu-1}}\right)\right)}}\frac{1}{n^{\Gamma-\gamma}}\left\|h\right\|_{\ell^\infty_{\Gamma}}\\
					&\lesssim \frac{1}{n^{\Gamma-\gamma}}\left\|h\right\|_{\ell^\infty_{\Gamma}}.
				\end{split}\label{prop:Est:lem1:1b:4}
			\end{align}
			
			Using \eqref{prop:Est:lem1:1a:Overall} and \eqref{prop:Est:lem1:1b:2}-\eqref{prop:Est:lem1:1b:4} allows us to conclude the proof of \eqref{prop:Est:lem1:1b}.
		\end{subequations}

		\underline{$\bullet$ \textit{Proof of the estimate \eqref{prop:Est:lem1:1c}}}
		
		\begin{subequations}
			The proof of \eqref{prop:Est:lem1:1c} is essentially the same one as for \eqref{prop:Est:lem1:1b}. Indeed, we still need to find estimates for \eqref{prop:Est:lem1:1b:1} but this time for $h\in\ell^1_{\Gamma}$ and $n\in\N\backslash\lbrace0\rbrace$. Let us observe that we have the following inequality:
			\begin{equation}\label{prop:Est:lem1:1c:1}
				\forall h\in  \ell^1_{\Gamma}, \quad \left\|h\right\|_{\ell^\infty_{\Gamma}}\leq \left\|h\right\|_{\ell^1_{\Gamma}}.
			\end{equation}
			This inequality allows us to immediately adapt some intermediate results in the proof of \eqref{prop:Est:lem1:1b}, which applied to the case where $h$ belonged to $\ell^\infty_{\Gamma}$, for the proof of \eqref{prop:Est:lem1:1c}, where $h$ belonged to $\ell^1_{\Gamma}$. This will be explained below.
			
			$\star$ For $n\in\N\backslash\lbrace0\rbrace$, $h\in\ell^1_{\Gamma}$ and $j\in\N$, we have using \eqref{prop:Est:lem1:1b:2} and \eqref{prop:Est:lem1:1c:1}:
			\begin{multline}
				\sum_{j_0=0}^j\ind_{j-j_0\in\lbrace-nq,\hdots,np\rbrace}(1+|j|)^{\gamma}\frac{1}{n^\frac{1}{2\mu}}\underset{\leq\exp(-cn)}{\underbrace{\exp\left(-c\left(\frac{\left|n-\frac{j-j_0}{\alpha^+}\right|}{n^\frac{1}{2\mu}}\right)^{\frac{2\mu}{2\mu-1}}\right)}}\left|h_{j_0}\right| \\ \leq \exp\left(-cn\right)\left\|h\right\|_{\ell^1_{\gamma}}\leq \exp\left(-cn\right)\left\|h\right\|_{\ell^1_{\Gamma}}.
				\label{prop:Est:lem1:1c:2}
			\end{multline}
			
			$\star$ For $n\in\N\backslash\lbrace0\rbrace$, $h\in\ell^1_{\Gamma}$ and $j\in\N$, we have using \eqref{prop:Est:lem1:1b:3} and \eqref{prop:Est:lem1:1c:1}:
			\begin{equation}
				\sum_{j_0\geq j+1}\ind_{j_0\in\left[0,-\frac{n\alpha^+}{2}\right[}\ind_{j-j_0\in\lbrace-nq,\hdots,np\rbrace}\underset{\leq(1+|j_0|)^{\gamma}}{\underbrace{(1+|j|)^{\gamma}}}\frac{1}{n^\frac{1}{2\mu}}\exp\left(-c\left(\frac{\left|n-\frac{j-j_0}{\alpha^+}\right|}{n^\frac{1}{2\mu}}\right)^{\frac{2\mu}{2\mu-1}}\right)\left|h_{j_0}\right| \lesssim \exp\left(-\frac{\tilde{c}}{2}n\right)\left\|h\right\|_{\ell^1_{\Gamma}}.
				\label{prop:Est:lem1:1c:3}
			\end{equation}
			
			$\star$ For $n\in\N\backslash\lbrace0\rbrace$, $h\in\ell^1_{\Gamma}$ and $j\in\N$, we have:
			\begin{align}
				\begin{split}
					\hspace{2cm}\mathclap{\hspace{4cm}\sum_{j_0\geq j+1}\ind_{j_0\in\left[-\frac{n\alpha^+}{2},+\infty\right[}\ind_{j-j_0\in\lbrace-nq,\hdots,np\rbrace}\underset{\leq(1+|j_0|)^{\gamma}}{\underbrace{(1+|j|)^{\gamma}}}\frac{1}{n^\frac{1}{2\mu}}\exp\left(-c\left(\frac{\left|n-\frac{j-j_0}{\alpha^+}\right|}{n^\frac{1}{2\mu}}\right)^{\frac{2\mu}{2\mu-1}}\right)\left|h_{j_0}\right|} & \\
					&\lesssim \sum_{j_0\in\N}\ind_{j_0\in\left[-\frac{n\alpha^+}{2},+\infty\right[} \frac{1}{n^\frac{1}{2\mu}} \frac{(1+|j_0|)^{\Gamma}}{|j_0|^{\Gamma-\gamma}}|h_{j_0}|\\
					&\lesssim \frac{1}{n^{\Gamma-\gamma+\frac{1}{2\mu}}}\left\|h\right\|_{\ell^1_{\Gamma}}.
				\end{split}\label{prop:Est:lem1:1c:4}
			\end{align}
			
			Using \eqref{prop:Est:lem1:1a:Overall} and \eqref{prop:Est:lem1:1c:2}-\eqref{prop:Est:lem1:1c:4} allows us to conclude the proof of \eqref{prop:Est:lem1:1c}.
		\end{subequations}

		\underline{$\blacktriangleright$ \textbf{Proof of the estimates \eqref{prop:Est:lem1:2} on the second term:}}
		
		In this part, we will only prove the estimates for the sequence $\left(T^+_R(h,n,j)\right)_{j\in\Z}$. Everything can then easily be extended for the sequence $\left(T^-_R(h,n,j)\right)_{j\in\Z}$. 
		
		First, using the expressions \eqref{ExpTR+} of $T_R^+(h,n,j)$ and \eqref{ExpR+} of $R^+(n,j_0,j)$, we observe that for $n\in\N\backslash\lbrace0\rbrace$ and $h\in\ell^\infty_{\Gamma}$, we have:
		\begin{align}\label{prop:Est:lem1:2:Overall}
			\begin{split}
				\left\|\left(T_R^+(h,n,j)\right)_{j\in\Z}\right\|_{\ell^1_{\gamma}}&=\sum_{j\in\Z} (1+|j|)^{\gamma} \left|T_R^+(h,n,j)\right|  \\
				&=  \sum_{j\in\Z}(1+|j|)^{\gamma}\left|\sum_{j_0\in\N}\ind_{j-j_0\in\lbrace-nq,\hdots,np\rbrace}O\left(\frac{e^{-c|j|}}{n^\frac{1}{2\mu}}\exp\left(-c\left(\frac{\left|n+\frac{j_0}{\alpha^+}\right|}{n^\frac{1}{2\mu}}\right)^{\frac{2\mu}{2\mu-1}}\right)\right)h_{j_0}\right| \\ 
				&\lesssim  \sum_{j_0\in\N} \frac{1}{n^\frac{1}{2\mu}}\exp\left(-c\left(\frac{\left|n+\frac{j_0}{\alpha^+}\right|}{n^\frac{1}{2\mu}}\right)^{\frac{2\mu}{2\mu-1}}\right)|h_{j_0}|.
			\end{split}
		\end{align}
		Thus, to prove the estimates \eqref{prop:Est:lem1:2}, we just need to find bounds for $n\in\N\backslash\lbrace0\rbrace$ on:
		$$ \sum_{j_0\in\N} \frac{1}{n^\frac{1}{2\mu}}\exp\left(-c\left(\frac{\left|n+\frac{j_0}{\alpha^+}\right|}{n^\frac{1}{2\mu}}\right)^{\frac{2\mu}{2\mu-1}}\right)|h_{j_0}|$$
		when $h\in\ell^\infty_{\Gamma}$ and when $h\in\ell^1_{\Gamma}$.
		
		\underline{$\bullet$ \textit{Proof of the estimate \eqref{prop:Est:lem1:2a}}}
		
		\begin{subequations}
			
			$\star$ We observe using \eqref{in:ctilde} that for $n\in\N\backslash\lbrace0\rbrace$ and $h\in\ell^\infty_{\Gamma}$:
			\begin{multline}\label{prop:Est:lem1:2a:1}
				\sum_{j_0\in\N}\ind_{j_0\in\left[0,-\frac{n\alpha^+}{2}\right[} \frac{1}{n^\frac{1}{2\mu}}\exp\left(-c\left(\frac{\left|n+\frac{j_0}{\alpha^+}\right|}{n^\frac{1}{2\mu}}\right)^{\frac{2\mu}{2\mu-1}}\right)|h_{j_0}| \\ \lesssim \frac{n}{n^\frac{1}{2\mu}}e^{-\tilde{c}n} \left\|h\right\|_{\ell^\infty}\lesssim e^{-\frac{\tilde{c}}{2}n} \left\|h\right\|_{\ell^\infty}\lesssim e^{-\frac{\tilde{c}}{2}n} \left\|h\right\|_{\ell^\infty_{\Gamma}} .
			\end{multline} 
			
			$\star$ We observe that for $n\in\N\backslash\lbrace0\rbrace$ and $h\in\ell^\infty_{\Gamma}$, we have using \eqref{in:sommeGauss} that:
			\begin{align}
				\begin{split}
					&\sum_{j_0\in\N}\ind_{j_0\in\left[-\frac{n\alpha^+}{2},+\infty\right[} \frac{1}{n^\frac{1}{2\mu}}\exp\left(-c\left(\frac{\left|n+\frac{j_0}{\alpha^+}\right|}{n^\frac{1}{2\mu}}\right)^{\frac{2\mu}{2\mu-1}}\right)|h_{j_0}| \\
					\leq& \sum_{j_0\in\N}\ind_{j_0\in\left[-\frac{n\alpha^+}{2},+\infty\right[} \frac{1}{n^\frac{1}{2\mu}}\exp\left(-c\left(\frac{\left|n+\frac{j_0}{\alpha^+}\right|}{n^\frac{1}{2\mu}}\right)^{\frac{2\mu}{2\mu-1}}\right)\frac{(1+|j_0|)^{\Gamma}}{|j_0|^{\Gamma}}|h_{j_0}|\\
					\lesssim& \frac{\left\|h\right\|_{\ell^\infty_{\Gamma}}}{n^{\Gamma}}.
				\end{split}\label{prop:Est:lem1:2a:2}
			\end{align}
			
			Thus, using \eqref{prop:Est:lem1:2:Overall},\eqref{prop:Est:lem1:2a:1} and \eqref{prop:Est:lem1:2a:2}, we can prove \eqref{prop:Est:lem1:2a}. 
		\end{subequations}
		
		\underline{$\bullet$ \textit{Proof of the estimate \eqref{prop:Est:lem1:2b}}}
		
		\begin{subequations}
			$\star$ First, using \eqref{prop:Est:lem1:1c:1} and \eqref{prop:Est:lem1:2a:1}, we observe that  for $n\in\N\backslash\lbrace0\rbrace$ and $h\in\ell^1_{\Gamma}$:
			\begin{equation}\label{prop:Est:lem1:2b:1}
				\sum_{j_0\in\N}\ind_{j_0\in\left[0,-\frac{n\alpha^+}{2}\right[} \frac{1}{n^\frac{1}{2\mu}}\exp\left(-c\left(\frac{\left|n+\frac{j_0}{\alpha^+}\right|}{n^\frac{1}{2\mu}}\right)^{\frac{2\mu}{2\mu-1}}\right)|h_{j_0}| \lesssim e^{-\frac{\tilde{c}}{2}n} \left\|h\right\|_{\ell^\infty_{\Gamma}} \lesssim e^{-\frac{\tilde{c}}{2}n} \left\|h\right\|_{\ell^1_{\Gamma}} .
			\end{equation} 
			
			$\star$ For $n\in\N\backslash\lbrace0\rbrace$ and $h\in\ell^1_{\Gamma}$, we have that:
			\begin{align}
				\begin{split}
					\sum_{j_0\in\N}\ind_{j_0\in\left[-\frac{n\alpha^+}{2},+\infty\right[} \frac{1}{n^\frac{1}{2\mu}}\exp\left(-c\left(\frac{\left|n+\frac{j_0}{\alpha^+}\right|}{n^\frac{1}{2\mu}}\right)^{\frac{2\mu}{2\mu-1}}\right)|h_{j_0}| & \leq \sum_{j_0\in\N}\ind_{j_0\in\left[-\frac{n\alpha^+}{2},+\infty\right[} \frac{1}{n^\frac{1}{2\mu}}\frac{(1+|j_0|)^{\Gamma}}{|j_0|^{\Gamma}}|h_{j_0}|\\
					&\lesssim \frac{\left\|h\right\|_{\ell^1_{\Gamma}}}{n^{\Gamma+\frac{1}{2\mu}}}.
				\end{split}\label{prop:Est:lem1:2b:2}
			\end{align}
			Thus, using \eqref{prop:Est:lem1:2:Overall},\eqref{prop:Est:lem1:2b:1} and \eqref{prop:Est:lem1:2b:2}, we can prove \eqref{prop:Est:lem1:2b}.
		\end{subequations}
		
		\underline{$\blacktriangleright$ \textbf{Proof of the estimates \eqref{prop:Est:lem1:3} on the third term:}}
		
		For $h\in\ell^1_{\Gamma}$ and $n\in\N\backslash\lbrace0\rbrace$, using \eqref{in:cnj0j}, we have that:
		\begin{align*}
			\sum_{j\in\Z}(1+|j|)^{\gamma}\left|\sum_{j_0\in\Z}\ind_{j-j_0\in\lbrace-nq,\hdots,np\rbrace}O(e^{-cn})h_{j_0}\right|& \lesssim (1+n)^{\gamma}e^{-cn}\sum_{j_0\in\Z}\left(\sum_{j\in\Z}\ind_{j-j_0\in\lbrace-nq,\hdots,np\rbrace}\right)(1+|j_0|)^{\gamma}|h_{j_0}|\\
			&\lesssim n(1+n)^{\gamma}e^{-cn} \left\|h\right\|_{\ell^1_{\Gamma}}\\
			&\lesssim e^{-\frac{c}{2}n}\left\|h\right\|_{\ell^1_{\Gamma}}.
		\end{align*}
		We thus obtain \eqref{prop:Est:lem1:3a}.
		
		Similarly, for $h\in\ell^\infty_{\Gamma}$ and $n\in\N\backslash\lbrace0\rbrace$ and $j\in\Z$, using \eqref{in:cnj0j}, we have that:
		\begin{align*}
			(1+|j|)^{\gamma}\left|\sum_{j_0\in\Z}\ind_{j-j_0\in\lbrace-nq,\hdots,np\rbrace}O(e^{-cn})h_{j_0}\right|& \lesssim (1+n)^{\gamma}e^{-cn}\underset{\lesssim n}{\underbrace{\sum_{j_0\in\Z}\ind_{j-j_0\in\lbrace-nq,\hdots,np\rbrace}}}\underset{\leq\left\|h\right\|_{\ell^\infty_{\gamma}}\leq\left\|h\right\|_{\ell^\infty_{\Gamma}}}{\underbrace{(1+|j_0|)^{\gamma}|h_{j_0}|}}\\
			&\lesssim n(1+n)^{\gamma}e^{-cn} \left\|h\right\|_{\ell^\infty_{\Gamma}}\\
			&\lesssim e^{-\frac{c}{2}n}\left\|h\right\|_{\ell^\infty_{\Gamma}}.
		\end{align*}
		We can then deduce  \eqref{prop:Est:lem1:3b}.
		
		\underline{$\blacktriangleright$ \textbf{Proof of the estimates \eqref{prop:Est:lem1:4} on the fourth term:}}

		We start the proof of \eqref{prop:Est:lem1:4} with an observation. Using the definition \eqref{ExpTE} of $T_E(h,n,j)$, for $n\in\N\backslash\lbrace0\rbrace$ and $h\in\E_{\Gamma}$, we have:
		\begin{align*}
			&\left\|\left(T_E(h,n,j)\right)_{j\in\Z}\right\|_{\ell^1_{\gamma}} \\
			= &\left\|\left(\left(\sum_{j_0\in\N}\ind_{j-j_0\in\lbrace-nq,\hdots,np\rbrace}E^+(n,j_0)h_{j_0}+\sum_{j_0\in-\N\backslash\lbrace0\rbrace}\ind_{j-j_0\in\lbrace-nq,\hdots,np\rbrace}E^-(n,j_0)h_{j_0}\right)V(j)\right)_{j\in\Z}\right\|_{\ell^1_{\gamma}} \\
			\leq & \left\|\left(\left(\sum_{j_0\in\N}\ind_{j-j_0\notin\lbrace-nq,\hdots,np\rbrace}E^+(n,j_0)h_{j_0}+\sum_{j_0\in-\N\backslash\lbrace0\rbrace}\ind_{j-j_0\notin\lbrace-nq,\hdots,np\rbrace}E^-(n,j_0)h_{j_0}\right)V(j)\right)_{j\in\Z}\right\|_{\ell^1_{\gamma}} \\
			& + \left\|\left(\left(\sum_{j_0\in\N}E^+(n,j_0)h_{j_0}+\sum_{j_0\in-\N\backslash\lbrace0\rbrace}E^-(n,j_0)h_{j_0}\right)V(j)\right)_{j\in\Z}\right\|_{\ell^1_{\gamma}}
		\end{align*}
		Thus, we have that for all $n\in\N\backslash\lbrace0\rbrace$ and $h\in\E_{\Gamma}$:
		\begin{align}
			\begin{split}
				&\left\|\left(T_E(h,n,j)\right)_{j\in\Z}\right\|_{\ell^1_{\gamma}} \\
				\leq & \left\|\left(\left(\sum_{j_0\in\N}\ind_{j-j_0\notin\lbrace-nq,\hdots,np\rbrace}E^+(n,j_0)h_{j_0}+\sum_{j_0\in-\N\backslash\lbrace0\rbrace}\ind_{j-j_0\notin\lbrace-nq,\hdots,np\rbrace}E^-(n,j_0)h_{j_0}\right)V(j)\right)_{j\in\Z}\right\|_{\ell^1_{\gamma}} \\
				& + \left|\sum_{j_0\in\N}E^+(n,j_0)h_{j_0}+\sum_{j_0\in-\N\backslash\lbrace0\rbrace}E^-(n,j_0)h_{j_0}\right|\left\|V\right\|_{\ell^1_{\gamma}}.
			\end{split}\label{prop:Est:lem1:4:1}
		\end{align}
		Furthermore, using a similar proof as for \cite[(2.10)]{Coeuret2024d}, we can prove that there exist two positive constants $C,c>0$ such that:
		\begin{multline}\label{prop:Est:lem1:4:2}
			\forall n\in\N\backslash\lbrace0\rbrace,\forall h\in\ell^\infty(\Z),\\
			\left\|\left(\left(\sum_{j_0\in\N}\ind_{j-j_0\notin\lbrace-nq,\hdots,np\rbrace}E^+(n,j_0)h_{j_0}+\sum_{j_0\in-\N\backslash\lbrace0\rbrace}\ind_{j-j_0\notin\lbrace-nq,\hdots,np\rbrace}E^-(n,j_0)h_{j_0}\right)V(j)\right)_{j\in\Z}\right\|_{\ell^1_{\gamma}} \\ \leq Ce^{-cn}\left\|h\right\|_{\ell^\infty}.
		\end{multline}
		
		Combining \eqref{prop:Est:lem1:4:1} and \eqref{prop:Est:lem1:4:2}, we have that for all $n\in\N\backslash\lbrace0\rbrace$ and $h\in\E_{\Gamma}$:
		\begin{equation}\label{prop:Est:lem1:4:3}
			\left\|\left(T_E(h,n,j)\right)_{j\in\Z}\right\|_{\ell^1_{\gamma}} \leq Ce^{-cn}\left\|h\right\|_{\ell^1_{\Gamma}} + \left|\sum_{j_0\in\N}E^+(n,j_0)h_{j_0}+\sum_{j_0\in-\N\backslash\lbrace0\rbrace}E^-(n,j_0)h_{j_0}\right|\left\|V\right\|_{\ell^1_{\gamma}}.
		\end{equation}
		Therefore, if we prove that there exists a constant $\widetilde{C}>0$ such that:
		\begin{equation}\label{prop:Est:lem1:4:4}
			\forall h\in\E_{\Gamma},\forall n \in\N\backslash\lbrace0\rbrace,\quad \left|\sum_{j_0\in\N}E^+(n,j_0)h_{j_0}+\sum_{j_0\in-\N\backslash\lbrace0\rbrace}E^-(n,j_0)h_{j_0}\right|\leq \frac{\widetilde{C}}{n^{\Gamma}}\left\|h\right\|_{\ell^1_{\Gamma}},
		\end{equation}
		then, combining \eqref{prop:Est:lem1:4:3} and \eqref{prop:Est:lem1:4:4} allows us to conclude on the proof of \eqref{prop:Est:lem1:4}. Therefore, there just remains to prove \eqref{prop:Est:lem1:4:4}. 
		\begin{subequations}
			
			Using the definition \eqref{ExpE} of the terms $E^\pm$ and the zero-mass assumption on $h$, we have that:
			\begin{align}\label{prop:Est:lem1:4:5}
				\begin{split}
					&\left|\sum_{j_0\in\N}E^+(n,j_0)h_{j_0}+\sum_{j_0\in-\N\backslash\lbrace0\rbrace}E^-(n,j_0)h_{j_0}\right|\\
					=& |C_E|\left|\sum_{j_0\in\N}E_{2\mu}\left(\beta^+;\frac{n\alpha^++j_0}{n^\frac{1}{2\mu}}\right)h_{j_0}+\sum_{j_0\in-\N\backslash\lbrace0\rbrace}E_{2\mu}\left(\beta^-;\frac{-n\alpha^--j_0}{n^\frac{1}{2\mu}}\right)h_{j_0} -\underset{=0}{\underbrace{\sum_{j_0\in\Z}h_{j_0}}}\right| \\ 
					\leq&  |C_E|\left(\sum_{j_0\in\N}\left|E_{2\mu}\left(\beta^+;\frac{n\alpha^++j_0}{n^\frac{1}{2\mu}}\right)-1\right|\left|h_{j_0}\right|+\sum_{j_0\in-\N\backslash\lbrace0\rbrace}\left|E_{2\mu}\left(\beta^-;\frac{-n\alpha^--j_0}{n^\frac{1}{2\mu}}\right)-1\right|\left|h_{j_0}\right|\right).
				\end{split}
			\end{align}
			We now need to prove estimates on the terms on the right-hand side of \eqref{prop:Est:lem1:4:5}. We will decompose the sums with respect to $j_0$ according to various regimes depending on $n$.
			
			First, using the inequality \eqref{inE-} on the function $E_{2\mu}$, we have that there exists a positive constant $c$ such that for all $n\in\N\backslash\lbrace0\rbrace$ and $h\in\E_{\Gamma}$:
			\begin{align}\label{prop:Est:lem1:4:6}
				\begin{split}
					\hspace{3cm}\mathclap{\sum_{j_0\in\N}\ind_{j_0\in\left[0,-\frac{n\alpha^+}{2}\right[}\left|E_{2\mu}\left(\beta^+;\frac{n\alpha^++j_0}{n^\frac{1}{2\mu}}\right)-1\right|\left|h_{j_0}\right| }&\\
					&\lesssim \sum_{j_0\in\N}\ind_{j_0\in\left[0,-\frac{n\alpha^+}{2}\right[} \frac{1}{n^\frac{1}{2\mu}}\exp\left(-c\left(\frac{\left|n+\frac{j_0}{\alpha^+}\right|}{n^\frac{1}{2\mu}}\right)^\frac{2\mu}{2\mu-1}\right)|h_{j_0}|\\
					&\lesssim \underset{\lesssim n}{\underbrace{\sum_{j_0\in\N}\ind_{j_0\in\left[0,-\frac{n\alpha^+}{2}\right[}}} \frac{1}{n^\frac{1}{2\mu}}\exp\left(-\frac{c}{2^\frac{2\mu}{2\mu-1}}n\right)\left\|h\right\|_{\ell^\infty}\\
					& \lesssim e^{-\frac{c}{4}n}\left\|h\right\|_{\ell^1_{\Gamma}}
				\end{split}
			\end{align}
			and similarly:
			\begin{equation}\label{prop:Est:lem1:4:7}
				\sum_{j_0\in-\N\backslash\lbrace0\rbrace}\ind_{j_0\in\left]-\frac{n\alpha^-}{2},0\right]}\left|E_{2\mu}\left(\beta^-;\frac{-n\alpha^--j_0}{n^\frac{1}{2\mu}}\right)-1\right|\left|h_{j_0}\right|\lesssim e^{-\frac{c}{4}n}\left\|h\right\|_{\ell^1_{\Gamma}}.
			\end{equation}
			Furthermore, for all $n\in\N\backslash\lbrace0\rbrace$ and $h\in\E_{\Gamma}$:
			\begin{multline}\label{prop:Est:lem1:4:8}
				\sum_{j_0\in\N}\ind_{j_0\in\left[-\frac{n\alpha^+}{2},+\infty\right[}\underset{\leq 2}{\underbrace{\left|E_{2\mu}\left(\beta^+;\frac{n\alpha^++j_0}{n^\frac{1}{2\mu}}\right)-1\right|}}\left|h_{j_0}\right|\\ \lesssim \sum_{j_0\in\N}\ind_{j_0\in\left[-\frac{n\alpha^+}{2},+\infty\right[}\frac{(1+|j_0|)^{\Gamma}}{|j_0|^{\Gamma}} |h_{j_0}|\lesssim \frac{\left\|h\right\|_{\ell^1_{\Gamma}}}{n^{\Gamma}}\hspace{1cm}
			\end{multline} 
			and similarly:
			\begin{equation}\label{prop:Est:lem1:4:9}
				\sum_{j_0\in-\N\backslash\lbrace0\rbrace}\ind_{j_0\in\left]-\infty,-\frac{n\alpha^-}{2}\right]}\underset{\leq 2}{\underbrace{\left|E_{2\mu}\left(\beta^-;\frac{-n\alpha^--j_0}{n^\frac{1}{2\mu}}\right)-1\right|}}\left|h_{j_0}\right|\\ \lesssim\frac{\left\|h\right\|_{\ell^1_{\Gamma}}}{n^{\Gamma}}.
			\end{equation} 
		\end{subequations}
		Thus, combining \eqref{prop:Est:lem1:4:5}-\eqref{prop:Est:lem1:4:9}, we conclude the proof of \eqref{prop:Est:lem1:4:4} and thus of \eqref{prop:Est:lem1:4}. Thus, this concludes the proof of Lemma \ref{prop:Est:lem1}.
	\end{proof}
	
	\subsection{Proof of the estimates \texorpdfstring{\eqref{prop:Est:L^n(id-T):1,1}}{}, \texorpdfstring{\eqref{prop:Est:L^n(id-T):infty,1}}{} and \texorpdfstring{\eqref{prop:Est:L^n(id-T):infty,infty}}{} on the operator \texorpdfstring{$\Ldsp^n(Id-\Tc)$}{Ln(Id-T)}}\label{subsec:prop:Est:2}
	
	The methodology to prove the estimates \eqref{prop:Est:L^n(id-T):1,1}, \eqref{prop:Est:L^n(id-T):infty,1} and \eqref{prop:Est:L^n(id-T):infty,infty} on the operators $\Ldsp^n(Id-\Tc)$ will be exactly the same as for the proof of  the estimates \eqref{prop:Est:L^n:1} and \eqref{prop:Est:L^n:infty} on the operators $\Ldsp^n$
	
	Using the definition \eqref{def:Tc} of the operator $\Tc$ and the equality \eqref{lienLccGreen} above, we have that for $n\in\N$, $h\in\ell^\infty(\Z)$ and for $j\in\Z$:
	$$\left(\Ldsp^n(Id-\Tc)h\right)_j = \sum_{j_0\in\Z} \ind_{j-j_0\in\lbrace-nq,\hdots,np\rbrace}\Gcc(n,j_0,j)(h_{j_0}-h_{j_0+1}).$$
	Thus, we have that:
	\begin{multline}\label{lienLcc(Id-Tc)DerGreen}
		\forall n\in\N,\forall h\in\ell^\infty(\Z),\\ \Ldsp^n(Id-\Tc)h= \left(\sum_{j_0\in\Z}\ind_{j-j_0\in\lbrace-nq-1,\hdots,np\rbrace}\left(\Gcc(n,j_0,j)-\Gcc(n,j_0-1,j)\right)h_{j_0}\right)_{j\in\Z}.
	\end{multline}
	We recall that, just like for the Green's function, in the introduction of this paper, we have presented a decomposition \eqref{decompoDerGreen} of the discrete derivative of the Green's function in the scalar case. We have that there exists a constant $c>0$ such that for $j_0\in\Z$, $n\in\N\backslash\lbrace0\rbrace$ and $j\in\Z$ such that $j-j_0\in\lbrace-nq-1,\hdots,np\rbrace$, the discrete derivative of the Green's function verifies:
	\begin{subequations}\label{decompoDerGreenImproved}
		\begin{align}
			\begin{split}
				\forall j_0\in\N,\quad \Gcc(n,j_0,j)-\Gcc(n,j_0-1,j)  = D_1^+(n,j_0,j) &+ D_2^+(n,j_0,j) \\ &+ D_3^+(n,j_0,j) + R^+(n,j_0,j) +O(e^{-cn}),
			\end{split}\\
			\begin{split}
				\forall j_0\in-\N\backslash\lbrace0\rbrace,\quad \Gcc(n,j_0,j)-\Gcc(n,j_0-1,j)  = D_1^-(n,j_0,j) &+ D_2^-(n,j_0,j)\\ & + D_3^-(n,j_0,j) + R^-(n,j_0,j) +O(e^{-cn}),
			\end{split}
		\end{align}
	\end{subequations}
	where:
	\begin{itemize}
		\item The terms $D^\pm_1(n,j_0,j)$,  $D^\pm_2(n,j_0,j)$ and  $D^\pm_3(n,j_0,j)$ correspond to the remainder of the diffusion waves:
		\begin{subequations}\label{ExpDpm1}
			\begin{align}
				\forall j_0\in\N, \quad & D^+_1(n,j_0,j) = \ind_{j\geq0}O\left(\frac{1}{n^\frac{1}{\mu}}\exp\left(-c\left(\frac{\left|n-\left(\frac{j-j_0}{\alpha^+}\right)\right|}{n^\frac{1}{2\mu}}\right)^\frac{2\mu}{2\mu-1}\right)\right),\label{ExpD+1}\\
				\forall j_0\in-\N\backslash\lbrace0\rbrace, \quad & D^-_1(n,j_0,j) = \ind_{j\leq0}O\left(\frac{1}{n^\frac{1}{\mu}}\exp\left(-c\left(\frac{\left|n-\left(\frac{j-j_0}{\alpha^-}\right)\right|}{n^\frac{1}{2\mu}}\right)^\frac{2\mu}{2\mu-1}\right)\right),
			\end{align}
		\end{subequations}
		\begin{subequations}
			\begin{align}
				\forall j_0\in\N, \quad & D^+_2(n,j_0,j) = \ind_{j\geq0}O\left(\frac{e^{-c|j|}}{n^\frac{1}{2\mu}}\exp\left(-c\left(\frac{\left|n-\left(\frac{j-j_0}{\alpha^+}\right)\right|}{n^\frac{1}{2\mu}}\right)^\frac{2\mu}{2\mu-1}\right)\right),\label{ExpD+2}\\
				\forall j_0\in-\N\backslash\lbrace0\rbrace, \quad & D^-_2(n,j_0,j) = \ind_{j\leq0}O\left(\frac{e^{-c|j|}}{n^\frac{1}{2\mu}}\exp\left(-c\left(\frac{\left|n-\left(\frac{j-j_0}{\alpha^-}\right)\right|}{n^\frac{1}{2\mu}}\right)^\frac{2\mu}{2\mu-1}\right)\right),
			\end{align}
		\end{subequations}
		\begin{subequations}
			\begin{align}
				\forall j_0\in\N, \quad & D^+_3(n,j_0,j) = \ind_{j\geq0}O\left(\frac{e^{-c|j_0|}}{n^\frac{1}{2\mu}}\exp\left(-c\left(\frac{\left|n-\left(\frac{j-j_0}{\alpha^+}\right)\right|}{n^\frac{1}{2\mu}}\right)^\frac{2\mu}{2\mu-1}\right)\right),\label{ExpD+3}\\
				\forall j_0\in-\N\backslash\lbrace0\rbrace, \quad & D^-_3(n,j_0,j) = \ind_{j\leq0}O\left(\frac{e^{-c|j_0|}}{n^\frac{1}{2\mu}}\exp\left(-c\left(\frac{\left|n-\left(\frac{j-j_0}{\alpha^-}\right)\right|}{n^\frac{1}{2\mu}}\right)^\frac{2\mu}{2\mu-1}\right)\right).
			\end{align}
		\end{subequations}
		
		\item The terms $R^\pm(n,j_0,j)$ correspond to a remainder term for the activation of the eigenvector associated with the eigenvalue $1$ of the operator $\Ldsp$:
		\begin{subequations}
			\begin{align}
				\forall j_0\in\N, \quad & R^+(n,j_0,j) =O\left(\frac{e^{-c|j|}}{n^\frac{1}{2\mu}}\exp\left(-c\left(\frac{\left|n+\frac{j_0}{\alpha^+}\right|}{n^\frac{1}{2\mu}}\right)^\frac{2\mu}{2\mu-1}\right)\right),\label{ExpR+bis}\\
				\forall j_0\in-\N\backslash\lbrace0\rbrace, \quad & R^-(n,j_0,j) =O\left(\frac{e^{-c|j|}}{n^\frac{1}{2\mu}}\exp\left(-c\left(\frac{\left|n+\frac{j_0}{\alpha^-}\right|}{n^\frac{1}{2\mu}}\right)^\frac{2\mu}{2\mu-1}\right)\right).
			\end{align}
		\end{subequations}
	\end{itemize} 
	
	Applying the decompositions \eqref{decompoDerGreenImproved} of the discrete derivative of the Green's function in the right-hand side term of \eqref{lienLcc(Id-Tc)DerGreen}, we have that for $h\in\ell^\infty(\Z)$, $n\in\N\backslash\lbrace0\rbrace$ and $j\in\Z$:
	\begin{multline}\label{lienLcc(Id-Tc)DerGreenImproved}
		(\Ldsp^n(Id-\Tc)h)_j = T^+_{D,1}(h,n,j)+T^+_{D,2}(h,n,j)+T^+_{D,3}(h,n,j)\\
		+T^-_{D,1}(h,n,j)+T^-_{D,2}(h,n,j)+T^-_{D,3}(h,n,j)+	T^+_R(h,n,j)+ T^-_R(h,n,j) \\+\sum_{j_0\in\Z} \ind_{j-j_0\in\lbrace-nq-1,\hdots,np\rbrace} O(e^{-cn}) h_{j_0}
	\end{multline}
	where the terms $T^\pm_{D,k}(h,n,j)$ for $k\in\lbrace1,2,3\rbrace$ and $T^\pm_R(h,n,j)$ are respectively defined by:
	\begin{subequations}
		\begin{align}
			T^+_{D,k}(h,n,j)&:= \sum_{j_0\in\N} \ind_{j-j_0\in\lbrace-nq-1,\hdots,np\rbrace} D^+_k(n,j_0,j)h_{j_0},\label{ExpTD+bis}\\
			T^-_{D,k}(h,n,j)&:= \sum_{j_0\in-\N\backslash\lbrace0\rbrace} \ind_{j-j_0\in\lbrace-nq-1,\hdots,np\rbrace} D^-_k(n,j_0,j)h_{j_0},\\
			T^+_R(h,n,j)&:= \sum_{j_0\in\N} \ind_{j-j_0\in\lbrace-nq-1,\hdots,np\rbrace} R^+(n,j_0,j)h_{j_0},\label{ExpTR+bis}\\
			T^-_R(h,n,j)&:= \sum_{j_0\in-\N\backslash\lbrace0\rbrace} \ind_{j-j_0\in\lbrace-nq-1,\hdots,np\rbrace} R^-(n,j_0,j)h_{j_0}.
		\end{align}
	\end{subequations}
	
	The following lemma is analogous to Lemma \ref{prop:Est:lem1}. We will prove sharp estimates on the terms that appear in the right-hand side of \eqref{lienLcc(Id-Tc)DerGreenImproved}.
	
	\begin{lemma}\label{prop:Est:lem2}
		For $0\leq \gamma\leq \Gamma$, there exists a constant $C>0$ such that:\newline
		$\bullet$ \textbf{\underline{First term of the decomposition:}}
		\begin{subequations}\label{prop:Est:lem2:1}
			\begin{align}
				\begin{split}
					\forall n\in\N\backslash\lbrace0\rbrace,\forall h\in\ell^1_{\Gamma},\quad &\left\|\left(T^\pm_{D,1}(h,n,j)\right)_{j\in\Z}\right\|_{\ell^1_{\gamma}} \leq \frac{C}{n^{\Gamma-\gamma+\frac{1}{2\mu}}} \left\|h\right\|_{\ell^1_{\Gamma}},
				\end{split}\label{prop:Est:lem2:1a}\\
				\begin{split}
					\forall n\in\N\backslash\lbrace0\rbrace,\forall h\in\ell^1_{\Gamma},\quad&\left\|\left(T^\pm_{D,1}(h,n,j)\right)_{j\in\Z}\right\|_{\ell^\infty_{\gamma}} \leq \frac{C}{n^{\Gamma-\gamma+\frac{1}{\mu}}} \left\|h\right\|_{\ell^1_{\Gamma}},
				\end{split}\label{prop:Est:lem2:1b}\\
				\begin{split}
					\forall n\in\N\backslash\lbrace0\rbrace,\forall h\in\ell^\infty_{\Gamma},\quad&\left\|\left(T^\pm_{D,1}(h,n,j)\right)_{j\in\Z}\right\|_{\ell^\infty_{\gamma}} \leq \frac{C}{n^{\Gamma-\gamma+\frac{1}{2\mu}}} \left\|h\right\|_{\ell^\infty_{\Gamma}}.
				\end{split}\label{prop:Est:lem2:1c}
			\end{align}
		\end{subequations}	
		$\bullet$ \textbf{\underline{Second term of the decomposition:}}
		\begin{subequations}\label{prop:Est:lem2:2}
			\begin{align}
				\begin{split}
					\forall n\in\N\backslash\lbrace0\rbrace,\forall h\in\ell^1_{\Gamma},\quad&\left\|\left(T^\pm_{D,2}(h,n,j)\right)_{j\in\Z}\right\|_{\ell^1_{\gamma}} \leq  \frac{C}{n^{\Gamma+\frac{1}{2\mu}}}\left\|h\right\|_{\ell^1_{\Gamma}},
				\end{split}\label{prop:Est:lem2:2a}\\
				\begin{split}
					\forall n\in\N\backslash\lbrace0\rbrace,\forall h\in\ell^\infty_{\Gamma},\quad&\left\|\left(T^\pm_{D,2}(h,n,j)\right)_{j\in\Z}\right\|_{\ell^1_{\gamma}} \leq \frac{C}{n^{\Gamma}} \left\|h\right\|_{\ell^\infty_{\Gamma}}.
				\end{split}\label{prop:Est:lem2:2b}
			\end{align}
		\end{subequations}	
		$\bullet$ \textbf{\underline{Third term of the decomposition:}} There exists a constant $\hat{c}>0$ such that:
		\begin{subequations}\label{prop:Est:lem2:3}
			\begin{align}
				\begin{split}
					\forall n\in\N\backslash\lbrace0\rbrace,\forall h\in\ell^1_{\Gamma},\quad&\left\|\left(T^\pm_{D,3}(h,n,j)\right)_{j\in\Z}\right\|_{\ell^1_{\gamma}} \leq Ce^{-\hat{c}n} \left\|h\right\|_{\ell^1_{\Gamma}},
				\end{split}\label{prop:Est:lem2:3a}\\
				\begin{split}
					\forall n\in\N\backslash\lbrace0\rbrace,\forall h\in\ell^\infty_{\Gamma},\quad&\left\|\left(T^\pm_{D,3}(h,n,j)\right)_{j\in\Z}\right\|_{\ell^\infty_{\gamma}} \leq Ce^{-\hat{c}n} \left\|h\right\|_{\ell^\infty_{\Gamma}}.
				\end{split}\label{prop:Est:lem2:3b}
			\end{align}
		\end{subequations}	
		$\bullet$ \textbf{\underline{Fourth term of the decomposition:}}
		\begin{subequations}\label{prop:Est:lem2:4}
			\begin{align}
				\begin{split}
					\forall n\in\N\backslash\lbrace0\rbrace,\forall h\in\ell^1_{\Gamma},\quad&\left\|\left(T^\pm_R(h,n,j)\right)_{j\in\Z}\right\|_{\ell^1_{\gamma}}\leq \frac{C}{n^{\Gamma+\frac{1}{2\mu}}} \left\|h\right\|_{\ell^1_{\Gamma}},
				\end{split}\label{prop:Est:lem2:4a}\\
				\begin{split}
					\forall n\in\N\backslash\lbrace0\rbrace,\forall h\in\ell^\infty_{\Gamma},\quad&\left\|\left(T^\pm_R(h,n,j)\right)_{j\in\Z}\right\|_{\ell^1_{\gamma}}\leq \frac{C}{n^{\Gamma}} \left\|h\right\|_{\ell^\infty_{\Gamma}}.
				\end{split}\label{prop:Est:lem2:4b}
			\end{align}
		\end{subequations}	
		$\bullet$ \textbf{\underline{Fifth term of the decomposition:}}
		\begin{subequations}\label{prop:Est:lem2:5}
			\begin{align}
				\forall n\in\N\backslash\lbrace0\rbrace,\forall h\in\ell^1_{\Gamma},\quad &\left\|\left(\sum_{j_0\in\Z}\ind_{j-j_0\in\lbrace-nq-1,\hdots,np\rbrace}O(e^{-cn})h_{j_0}\right)_{j\in\Z}\right\|_{\ell^1_{\gamma}}\leq C e^{-\frac{c}{2}n}\left\|h\right\|_{\ell^1_{\Gamma}},\label{prop:Est:lem2:5a}\\
				\forall n\in\N\backslash\lbrace0\rbrace,\forall h\in\ell^\infty_{\Gamma}, \quad & \left\|\left(\sum_{j_0\in\Z}\ind_{j-j_0\in\lbrace-nq-1,\hdots,np\rbrace}O(e^{-cn})h_{j_0}\right)_{j\in\Z}\right\|_{\ell^\infty_{\gamma}}\leq C e^{-\frac{c}{2}n}\left\|h\right\|_{\ell^\infty_{\Gamma}}.\label{prop:Est:lem2:5b}
			\end{align}
		\end{subequations}
	\end{lemma}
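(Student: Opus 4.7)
\textbf{of Lemma \ref{prop:Est:lem2}}

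The proof follows the same strategy as Lemma \ref{prop:Est:lem1}: we decompose each sum in $j_0$ according to whether $j_0$ lies in the Gaussian-dominated regime $[0,-\frac{n\alpha^+}{2}[$ (where we exploit \eqref{in:ctilde} to extract an exponential decay $e^{-\tilde c n}$) or in the polynomial regime $[-\frac{n\alpha^+}{2},+\infty[$ (where the weight $(1+|j_0|)^\Gamma/|j_0|^\Gamma$ provides the factor $n^{-\Gamma}$), using the uniform integrability \eqref{in:sommeGauss} of the generalized Gaussian kernel. By symmetry we only handle the $+$ superscripts below.

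\textbf{First term $T^+_{D,1}$.} The only difference between $D^+_1$ in \eqref{ExpD+1} and $D^+$ in \eqref{ExpD+} is the prefactor $n^{-1/\mu}$ versus $n^{-1/2\mu}$, i.e. an additional $n^{-1/2\mu}$ decay. Therefore, repeating verbatim the case analysis used to establish \eqref{prop:Est:lem1:1a}, \eqref{prop:Est:lem1:1c} and \eqref{prop:Est:lem1:1b} (splitting into $j_0 \le j$, $j < j_0 < -n\alpha^+/2$ and $j_0 \ge -n\alpha^+/2$, then invoking \eqref{in:cnj0j}, \eqref{in:ctilde} and \eqref{in:sommeGauss}) and carrying the extra $n^{-1/2\mu}$ all the way through yields \eqref{prop:Est:lem2:1a}, \eqref{prop:Est:lem2:1b} and \eqref{prop:Est:lem2:1c} respectively.

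\textbf{Second term $T^+_{D,2}$.} The prefactor in $D^+_2$ now carries an extra $e^{-c|j|}$. Absorbing this factor destroys the need to manage the weight $(1+|j|)^\gamma \le (1+|j|)^\Gamma$ in $j$, since $(1+|j|)^\Gamma e^{-c|j|}$ is bounded. For the $\ell^1_\gamma$ estimates we therefore only need to bound, with a uniform constant, sums of the type $\sum_{j_0\in\N} n^{-1/2\mu}\exp(-c(|n-(j-j_0)/\alpha^+|/n^{1/2\mu})^{2\mu/(2\mu-1)}) |h_{j_0}|$ summed over $j\ge 0$. Summing first in $j$ via \eqref{in:sommeGauss} removes the Gaussian kernel and leaves an $O(1)$ factor, reducing the analysis to estimating $\sum_{j_0} |h_{j_0}|$ with suitable weights. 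Splitting $j_0$ as above and using the weight trick $(1+|j_0|)^\Gamma/|j_0|^\Gamma$ produces $n^{-\Gamma}\|h\|_{\ell^\infty_\Gamma}$ (giving \eqref{prop:Est:lem2:2b}); the $\ell^1_\Gamma$ case is handled similarly but by first bounding $\sum |h_{j_0}|$ by $n^{-1/2\mu}\|h\|_{\ell^1_\Gamma}$ after the polynomial split, yielding \eqref{prop:Est:lem2:2a}.

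\textbf{Third term $T^+_{D,3}$.} The extra factor $e^{-c|j_0|}$ forces localization in $j_0$. Summing $e^{-c|j_0|}(1+|j_0|)^\Gamma$ over $j_0 \in \N$ is finite, so the dependence in $h$ collapses to $\|h\|_{\ell^\infty}$ (or $\|h\|_{\ell^1}$, whichever is easier), and the remaining Gaussian kernel in $(n,j_0,j)$ is bounded using \eqref{in:ctilde}: for any fixed $j_0$, the kernel evaluated is at most $\exp(-\tilde c n/2)$ outside an exponentially small set in $n$, and summable in $j$ by \eqref{in:sommeGauss}. This yields a pure exponential decay $e^{-\hat c n}$ as in \eqref{prop:Est:lem2:3a}--\eqref{prop:Est:lem2:3b}.

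\textbf{Fourth term $T^+_R$.} The expressions of $R^+$ in \eqref{ExpR+bis} and \eqref{ExpR+} are identical. The estimates \eqref{prop:Est:lem1:2a} and \eqref{prop:Est:lem1:2b} are precisely \eqref{prop:Est:lem2:4b} and \eqref{prop:Est:lem2:4a} respectively (the support in $j_0$ in \eqref{decompoDerGreen} differs only by one extra integer, which is absorbed by the same argument). No additional work is needed.

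\textbf{Fifth term.} This is verbatim \eqref{prop:Est:lem1:3a}--\eqref{prop:Est:lem1:3b} from Lemma \ref{prop:Est:lem1}; the widening of the index set from $\{-nq,\hdots,np\}$ to $\{-nq-1,\hdots,np\}$ affects only the constant.

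The main \emph{bookkeeping} obstacle is that each term must be split into the two regimes in $j_0$ and, in the $\ell^1$ estimates, also summed in $j$ using \eqref{in:cnj0j} and \eqref{in:sommeGauss}; but none of these manipulations goes beyond those already performed in detail for Lemma \ref{prop:Est:lem1}.
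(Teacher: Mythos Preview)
Your proposal is essentially correct and follows the same route as the paper: reduce the first, fourth, and fifth terms to the corresponding estimates already established in Lemma~\ref{prop:Est:lem1}, and handle the second and third terms by the same three-regime split in $j_0$ (namely $j_0\le j$, $j<j_0<-\tfrac{n\alpha^+}{2}$, $j_0\ge -\tfrac{n\alpha^+}{2}$) combined with \eqref{in:ctilde}, \eqref{in:sommeGauss} and \eqref{in:cnj0j}.

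Two places where your wording is loose and would benefit from being tightened:

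\emph{Second term.} Your phrase ``summing first in $j$ via \eqref{in:sommeGauss} removes the Gaussian kernel and leaves an $O(1)$ factor, reducing the analysis to estimating $\sum_{j_0}|h_{j_0}|$'' is misleading for the $\ell^\infty_\Gamma$ bound \eqref{prop:Est:lem2:2b}. If you literally sum in $j$ first and discard the Gaussian, the remaining sum $\sum_{j_0\ge -n\alpha^+/2}|h_{j_0}|$ is not controlled by $n^{-\Gamma}\|h\|_{\ell^\infty_\Gamma}$ (it diverges). What actually works --- and what the paper does --- is to keep $j$ fixed, split $j_0$, and in the polynomial regime $j_0\ge -n\alpha^+/2$ use $|h_{j_0}|\lesssim n^{-\Gamma}\|h\|_{\ell^\infty_\Gamma}$ pointwise and \emph{then} sum the Gaussian kernel in $j_0$ via \eqref{in:sommeGauss}; the outer sum in $j$ is handled separately by the $e^{-c|j|/2}$ factor. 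The order of operations matters here.

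\emph{Third term.} Your sentence ``for any fixed $j_0$, the kernel evaluated is at most $\exp(-\tilde c n/2)$ outside an exponentially small set in $n$'' does not capture the mechanism. The exponential decay in $n$ comes from two distinct sources depending on the regime: for $j_0<-\tfrac{n\alpha^+}{2}$ it is \eqref{in:ctilde} acting on the Gaussian kernel, while for $j_0\ge -\tfrac{n\alpha^+}{2}$ it is the prefactor $e^{-c|j_0|}\le e^{-\hat c n}$ itself (since $|j_0|\gtrsim n$). You should make this dichotomy explicit.
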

	
	Combining the results of Lemma \ref{prop:Est:lem2} and the equality \eqref{lienLcc(Id-Tc)DerGreenImproved}, we can immediately obtain the inequalities \eqref{prop:Est:L^n(id-T):1,1}, \eqref{prop:Est:L^n(id-T):infty,1} and \eqref{prop:Est:L^n(id-T):infty,infty} on the family of operators $(\Ldsp^n(Id-\Tc))_{n\in\N}$. There just remains to conclude on the proof of Lemma \ref{prop:Est:lem2}.
	
	\begin{proof}\textbf{of Lemma \ref{prop:Est:lem2}}
		
		Let us observe that the estimates \eqref{prop:Est:lem2:4} for the fourth term and \eqref{prop:Est:lem2:5} for the fifth term have already been proved in Lemma \ref{prop:Est:lem1}. Furthermore, the term $D^\pm_1$ defined by \eqref{ExpDpm1} corresponds to the term $D^\pm$ defined by \eqref{ExpDpm} with a factor $\frac{1}{n^\frac{1}{\mu}}$ rather than $\frac{1}{n^\frac{1}{2\mu}}$. Thus, the estimates \eqref{prop:Est:lem2:1} for the first term of Lemma \ref{prop:Est:lem2} corresponds to the estimates \eqref{prop:Est:lem1:1} for the first term of Lemma \ref{prop:Est:lem1}. There will only remain to prove \eqref{prop:Est:lem2:2} and \eqref{prop:Est:lem2:3}.
		
		\underline{$\blacktriangleright$ \textbf{Proof of the estimates \eqref{prop:Est:lem2:2} on the second term:}}
		
		\begin{subequations}
			The proof of \eqref{prop:Est:lem2:2} is fairly similar to \eqref{prop:Est:lem1:2}. For $n\in\N\backslash\lbrace0\rbrace$ and $h\in\ell^\infty_{\Gamma}$, using the expressions \eqref{ExpTD+bis} of $T_{D,2}^+(h,n,j)$ and \eqref{ExpD+2} of $D_2^+(n,j_0,j)$, we have that:
			\begin{align}\label{prop:Est:lem2:2:1}
				\begin{split}
					& \left\|\left(T^+_{D,2}(h,n,j)\right)_{j\in\Z}\right\|_{\ell^1_{\gamma}}\\
					=&\sum_{j\in\Z}(1+|j|)^{\gamma}\left|\sum_{j_0\in\N}\ind_{j-j_0\in\lbrace-nq-1,\hdots,np\rbrace}\ind_{j\geq0}O\left(\frac{e^{-c|j|}}{n^\frac{1}{2\mu}}\exp\left(-c\left(\frac{\left|n-\frac{j-j_0}{\alpha^+}\right|}{n^\frac{1}{2\mu}}\right)^{\frac{2\mu}{2\mu-1}}\right)\right)h_{j_0}\right| \\ 
					\lesssim& \sum_{j\in\N}\sum_{j_0\in\N}\ind_{j-j_0\in\lbrace-nq-1,\hdots,np\rbrace}e^{-\frac{c}{2}|j|} \frac{1}{n^\frac{1}{2\mu}}\exp\left(-c\left(\frac{\left|n-\frac{j-j_0}{\alpha^+}\right|}{n^\frac{1}{2\mu}}\right)^{\frac{2\mu}{2\mu-1}}\right)|h_{j_0}|.
				\end{split}
			\end{align}
			We will separate the sum on the right-hand side of \eqref{prop:Est:lem2:2:1}.
			
			$\bullet$ For $n\in\N\backslash\lbrace0\rbrace$ and $h\in\ell^\infty_{\Gamma}$, we have that:
			\begin{multline}\label{prop:Est:lem2:2:2}
				\sum_{j\in\N}\sum_{j_0=0}^j\ind_{j-j_0\in\lbrace-nq-1,\hdots,np\rbrace}e^{-\frac{c}{2}|j|} \frac{1}{n^\frac{1}{2\mu}}\exp\left(-c\left(\frac{\left|n-\frac{j-j_0}{\alpha^+}\right|}{n^\frac{1}{2\mu}}\right)^{\frac{2\mu}{2\mu-1}}\right)|h_{j_0}|\\ \leq\sum_{j\in\N}e^{-\frac{c}{2}|j|}\underset{\lesssim n}{\underbrace{\left(\sum_{j_0=0}^j\ind_{j-j_0\in\lbrace-nq-1,\hdots,np\rbrace}\right)}}\frac{1}{n^\frac{1}{2\mu}}e^{-cn}\left\|h\right\|_{\ell^\infty}			\lesssim e^{-\frac{c}{2}n}\left\|h\right\|_{\ell^\infty_{\Gamma}}.
			\end{multline}
			
			$\bullet$ For $n\in\N\backslash\lbrace0\rbrace$ and $h\in\ell^\infty_{\Gamma}$, we have using \eqref{in:ctilde} that:
			\begin{multline}\label{prop:Est:lem2:2:3}
				\sum_{j\in\N}\sum_{j_0\geq j+1} \ind_{j_0\in\left[0,-\frac{n\alpha^+}{2}\right[}\ind_{j-j_0\in\lbrace-nq-1,\hdots,np\rbrace}e^{-\frac{c}{2}|j|} \frac{1}{n^\frac{1}{2\mu}}\exp\left(-c\left(\frac{\left|n-\frac{j-j_0}{\alpha^+}\right|}{n^\frac{1}{2\mu}}\right)^{\frac{2\mu}{2\mu-1}}\right)|h_{j_0}|\\ \leq\sum_{j\in\N}e^{-\frac{c}{2}|j|}\underset{\lesssim n}{\underbrace{\left(\sum_{j_0\geq j+1}\ind_{j_0\in\left[0,-\frac{n\alpha^+}{2}\right[}\ind_{j-j_0\in\lbrace-nq-1,\hdots,np\rbrace}\right)}}\frac{1}{n^\frac{1}{2\mu}}e^{-\tilde{c}n}\left\|h\right\|_{\ell^\infty}	\lesssim e^{-\frac{\tilde{c}}{2}n}\left\|h\right\|_{\ell^\infty_{\Gamma}}.
			\end{multline}
			
			$\bullet$ For $n\in\N\backslash\lbrace0\rbrace$ and $h\in\ell^\infty_{\Gamma}$, we have using \eqref{in:sommeGauss} that:
			\begin{align}\label{prop:Est:lem2:2:4}
				\begin{split}
					&\sum_{j\in\N}\sum_{j_0\geq j+1} \ind_{j_0\in\left[-\frac{n\alpha^+}{2},+\infty\right[}\ind_{j-j_0\in\lbrace-nq-1,\hdots,np\rbrace}e^{-\frac{c}{2}|j|} \frac{1}{n^\frac{1}{2\mu}}\exp\left(-c\left(\frac{\left|n-\frac{j-j_0}{\alpha^+}\right|}{n^\frac{1}{2\mu}}\right)^{\frac{2\mu}{2\mu-1}}\right)|h_{j_0}|\\
					\leq&\sum_{j\in\N}e^{-\frac{c}{2}|j|}\sum_{j_0\geq j+1}\ind_{j_0\in\left[-\frac{n\alpha^+}{2},+\infty\right[}\frac{1}{n^\frac{1}{2\mu}}\exp\left(-c\left(\frac{\left|n-\frac{j-j_0}{\alpha^+}\right|}{n^\frac{1}{2\mu}}\right)^{\frac{2\mu}{2\mu-1}}\right) \underset{\lesssim \frac{\left\|h\right\|_{\ell^\infty_{\Gamma}}}{n^{\Gamma}}}{\underbrace{\frac{(1+|j_0|)^{\Gamma}}{|j_0|^{\Gamma}}|h_{j_0}|}}\\
					\lesssim &\frac{\left\|h\right\|_{\ell^\infty_{\Gamma}}}{n^{\Gamma}}.
				\end{split}
			\end{align}
			
			Thus, combining \eqref{prop:Est:lem2:2:1}-\eqref{prop:Est:lem2:2:4}, we obtain \eqref{prop:Est:lem2:2a}. There remains to prove \eqref{prop:Est:lem2:2b}. To prove it, we observe that for $n\in\N\backslash\lbrace0\rbrace$ and $h\in\ell^1_{\Gamma}$, we have that:
			\begin{multline}\label{prop:Est:lem2:2:5}
				\sum_{j\in\N}\sum_{j_0\geq j+1} \ind_{j_0\in\left[-\frac{n\alpha^+}{2},+\infty\right[}\ind_{j-j_0\in\lbrace-nq-1,\hdots,np\rbrace}e^{-\frac{c}{2}|j|} \frac{1}{n^\frac{1}{2\mu}}\exp\left(-c\left(\frac{\left|n-\frac{j-j_0}{\alpha^+}\right|}{n^\frac{1}{2\mu}}\right)^{\frac{2\mu}{2\mu-1}}\right)|h_{j_0}|\\
				\leq\frac{1}{n^\frac{1}{2\mu}}\sum_{j\in\N}e^{-\frac{c}{2}|j|}\underset{\lesssim \frac{\left\|h\right\|_{\ell^1_{\Gamma}}}{n^{\Gamma}}}{\underbrace{\sum_{j_0\geq j+1}\ind_{j_0\in\left[-\frac{n\alpha^+}{2},+\infty\right[}\frac{(1+|j_0|)^{\Gamma}}{|j_0|^{\Gamma}}|h_{j_0}|}}\lesssim \frac{\left\|h\right\|_{\ell^1_{\Gamma}}}{n^{\Gamma+\frac{1}{2\mu}}}.
			\end{multline}
			Thus, combining \eqref{prop:Est:lem2:2:1}-\eqref{prop:Est:lem2:2:3} and \eqref{prop:Est:lem2:2:5}, we obtain \eqref{prop:Est:lem2:2b}.

		\end{subequations}

		\underline{$\blacktriangleright$ \textbf{Proof of the estimates \eqref{prop:Est:lem2:3} on the third term:}}
		
		Using the expressions \eqref{ExpTD+bis} of $T_{D,3}^+(h,n,j)$ and \eqref{ExpD+3} of $D^+_3(n,j_0,j)$, we observe that for $n\in\N\backslash\lbrace0\rbrace$, $h\in\ell^\infty_{\Gamma}$ and $j\in\Z$, we have:
		\begin{align}\label{prop:Est:lem2:3:Overall}
			\begin{split}
				&(1+|j|)^{\gamma} \left|T_{D,3}^+(h,n,j)\right|  \\
				= & (1+|j|)^{\gamma} \left|\sum_{j_0\in\N}\ind_{j-j_0\in\lbrace-nq-1,\hdots,np\rbrace}\ind_{j\geq0}O\left(\frac{e^{-c|j_0|}}{n^\frac{1}{2\mu}}\exp\left(-c\left(\frac{\left|n-\frac{j-j_0}{\alpha^+}\right|}{n^\frac{1}{2\mu}}\right)^{\frac{2\mu}{2\mu-1}}\right)\right)h_{j_0}\right| \\ 
				\lesssim& \ind_{j\geq0} \sum_{j_0\in\N}\ind_{j-j_0\in\lbrace-nq-1,\hdots,np\rbrace}(1+|j|)^{\gamma}\frac{e^{-c|j_0|}}{n^\frac{1}{2\mu}}\exp\left(-c\left(\frac{\left|n-\frac{j-j_0}{\alpha^+}\right|}{n^\frac{1}{2\mu}}\right)^{\frac{2\mu}{2\mu-1}}\right)\left|h_{j_0}\right|
			\end{split}
		\end{align}

		\underline{$\bullet$ \textit{Proof of the estimate \eqref{prop:Est:lem2:3a}}}
		
		\begin{subequations}
			We consider $h\in\ell^1_{\Gamma}$ and $n\in\N\backslash\lbrace0\rbrace$. To prove \eqref{prop:Est:lem2:3a}, we want to find estimates on the sum:
			$$\sum_{j\in\Z}(1+|j|)^{\gamma} \left|T_{D,3}^+(h,n,j)\right|.$$
			We observe that the inequality \eqref{prop:Est:lem2:3:Overall} implies that we only need to find bounds on 
			\begin{equation}\label{prop:Est:lem2:3a:1}
				\sum_{j\in\Z}\ind_{j\geq0} \sum_{j_0\in\N}\ind_{j-j_0\in\lbrace-nq-1,\hdots,np\rbrace}(1+|j|)^{\gamma}\frac{e^{-c|j_0|}}{n^\frac{1}{2\mu}}\exp\left(-c\left(\frac{\left|n-\frac{j-j_0}{\alpha^+}\right|}{n^\frac{1}{2\mu}}\right)^{\frac{2\mu}{2\mu-1}}\right)\left|h_{j_0}\right|.
			\end{equation}
			We will decompose the sum \eqref{prop:Est:lem2:3a:1} with respect to $j$ according to various regimes of $j-j_0$.
			
			$\star$ For $n\in\N\backslash\lbrace0\rbrace$ and $h\in\ell^1_{\Gamma}$, we have using the inequality \eqref{in:cnj0j} and the fact that $\alpha^+<0$:
			\begin{align}
				\begin{split}
					\hspace{3cm} \mathclap{\hspace{8cm}\sum_{j\in\Z}\ind_{j\geq0}\sum_{j_0=0}^j\ind_{j-j_0\in\lbrace-nq-1,\hdots,np\rbrace}(1+|j|)^{\gamma}\frac{e^{-c|j_0|}}{n^\frac{1}{2\mu}}\underset{\leq\exp(-cn)}{\underbrace{\exp\left(-c\left(\frac{\left|n-\frac{j-j_0}{\alpha^+}\right|}{n^\frac{1}{2\mu}}\right)^{\frac{2\mu}{2\mu-1}}\right)}}\left|h_{j_0}\right|}&\\
					&\lesssim \sum_{j_0\in\N} (1+|j_0|)^{\gamma}e^{-c|j_0|}\underset{\lesssim n}{\underbrace{\sum_{j\geq j_0}\ind_{j-j_0\in\lbrace-nq-1,\hdots,np\rbrace}}}\frac{(1+n)^{\gamma}}{n^\frac{1}{2\mu}}  \exp(-cn)\underset{\leq\left\|h\right\|_{\ell^1_{\Gamma}}}{\underbrace{\left\|h\right\|_{\ell^\infty}}}\\
					&\lesssim \exp\left(-\frac{c}{2}n\right)\left\|h\right\|_{\ell^1_{\Gamma}}.
				\end{split}\label{prop:Est:lem2:3a:2}
			\end{align}
			
			$\star$ For $n\in\N\backslash\lbrace0\rbrace$ and $h\in\ell^1_{\Gamma}$, we have using the inequality \eqref{in:ctilde} and the fact that $\gamma\leq \Gamma$:
			\begin{align}
				\begin{split}
					\hspace{3cm} \mathclap{\hspace{8cm}\sum_{j\in\Z}\ind_{j\geq0}\sum_{j_0\geq j+1}\ind_{j_0\in\left[0,-\frac{n\alpha^+}{2}\right[}\ind_{j-j_0\in\lbrace-nq-1,\hdots,np\rbrace}\underset{\leq(1+|j_0|)^{\gamma}}{\underbrace{(1+|j|)^{\gamma}}}\frac{e^{-c|j_0|}}{n^\frac{1}{2\mu}}\exp\left(-c\left(\frac{\left|n-\frac{j-j_0}{\alpha^+}\right|}{n^\frac{1}{2\mu}}\right)^{\frac{2\mu}{2\mu-1}}\right)\left|h_{j_0}\right|} & \\
					&\lesssim \sum_{j_0\in\N}\ind_{j_0\in\left[0,-\frac{n\alpha^+}{2}\right[} (1+|j_0|)^{\gamma}e^{-c|j_0|}\underset{\lesssim n}{\underbrace{\sum_{j=0}^{j_0-1}\ind_{j-j_0\in\lbrace-nq-1,\hdots,np\rbrace}}}\frac{1}{n^\frac{1}{2\mu}}  \exp(-\tilde{c}n)\underset{\leq\left\|h\right\|_{\ell^1_{\Gamma}}}{\underbrace{\left\|h\right\|_{\ell^\infty}}}\\
					&\lesssim \exp\left(-\frac{\tilde{c}}{2}n\right)\left\|h\right\|_{\ell^1_{\Gamma}}.
				\end{split}\label{prop:Est:lem2:3a:3}
			\end{align}
			
			$\star$ There exists a constant $\hat{c}>0$ such that, for $n\in\N\backslash\lbrace0\rbrace$ and $h\in\ell^1_{\Gamma}$, we have using the inequality \eqref{in:sommeGauss}:
			\begin{align}
				\begin{split}
					&\sum_{j\in\Z}\ind_{j\geq0}\sum_{j_0\geq j+1}\ind_{j_0\in\left[-\frac{n\alpha^+}{2},+\infty\right[}\ind_{j-j_0\in\lbrace-nq-1,\hdots,np\rbrace}\underset{\leq(1+|j_0|)^{\gamma}}{\underbrace{(1+|j|)^{\gamma}}}\frac{e^{-c|j_0|}}{n^\frac{1}{2\mu}}\exp\left(-c\left(\frac{\left|n-\frac{j-j_0}{\alpha^+}\right|}{n^\frac{1}{2\mu}}\right)^{\frac{2\mu}{2\mu-1}}\right)\left|h_{j_0}\right|\\
					\leq& \sum_{j_0\in\N}\ind_{j_0\in\left[-\frac{n\alpha^+}{2},+\infty\right[} (1+|j_0|)^{\gamma}e^{-\frac{c}{2}|j_0|}\underset{\leq e^{-\hat{c}n}}{\underbrace{e^{-\frac{c}{2}|j_0|}}} \underset{\lesssim1}{\underbrace{\left(\sum_{j=0}^{j_0-1}\frac{1}{n^\frac{1}{2\mu}} \exp\left(-c\left(\frac{\left|n-\frac{j-j_0}{\alpha^+}\right|}{n^\frac{1}{2\mu}}\right)^{\frac{2\mu}{2\mu-1}}\right)\right)}} \underset{\leq\left\|h\right\|_{\ell^1_{\Gamma}}}{\underbrace{\left\|h\right\|_{\ell^\infty}}}\\
					\lesssim &e^{-\hat{c}n}\left\|h\right\|_{\ell^1_{\Gamma}}.
				\end{split}\label{prop:Est:lem2:3a:4}
			\end{align}
			
			Using \eqref{prop:Est:lem2:3:Overall} and \eqref{prop:Est:lem2:3a:2}-\eqref{prop:Est:lem2:3a:4} allows us to conclude the proof of \eqref{prop:Est:lem2:3a}.
		\end{subequations}
		
		\underline{$\bullet$ \textit{Proof of the estimate \eqref{prop:Est:lem2:3b}}}
		
		\begin{subequations}
			The proof of \eqref{prop:Est:lem2:3b} is fairly similar to \eqref{prop:Est:lem2:3a} with slight modifications made precise below. For $n\in\N\backslash\lbrace0\rbrace$ and $h\in \ell^\infty_{\Gamma}$, we want to prove bounds on:
			$$(1+|j|)^{\gamma}\left|T_{D,3}^+(h,n,j)\right|  $$
			for $j\in\Z$. The inequality \eqref{prop:Est:lem2:3:Overall} thus leads us to decompose and study the sum
			\begin{equation}\label{prop:Est:lem2:3b:1}
				\ind_{j\geq0} \sum_{j_0\in\N}\ind_{j-j_0\in\lbrace-nq-1,\hdots,np\rbrace}(1+|j|)^{\gamma}\frac{e^{-c|j_0|}}{n^\frac{1}{2\mu}}\exp\left(-c\left(\frac{\left|n-\frac{j-j_0}{\alpha^+}\right|}{n^\frac{1}{2\mu}}\right)^{\frac{2\mu}{2\mu-1}}\right)\left|h_{j_0}\right|.
			\end{equation}
			
			$\star$ For $n\in\N\backslash\lbrace0\rbrace$, $h\in\ell^\infty_{\Gamma}$ and $j\in\N$, we have using \eqref{in:cnj0j}:
			\begin{align}
				\begin{split}
					&\sum_{j_0=0}^j\ind_{j-j_0\in\lbrace-nq-1,\hdots,np\rbrace}(1+|j|)^{\gamma}\frac{e^{-c|j_0|}}{n^\frac{1}{2\mu}}\underset{\leq\exp(-cn)}{\underbrace{\exp\left(-c\left(\frac{\left|n-\frac{j-j_0}{\alpha^+}\right|}{n^\frac{1}{2\mu}}\right)^{\frac{2\mu}{2\mu-1}}\right)}}\left|h_{j_0}\right|\\
					\lesssim& \underset{\lesssim 1}{\underbrace{\sum_{j_0=0}^j\ind_{j-j_0\in\lbrace-nq-1,\hdots,np\rbrace}(1+|j_0|)^\gamma e^{-c|j_0|}}}\frac{(1+n)^{\gamma}}{n^\frac{1}{2\mu}}  \exp(-cn) \underset{\leq \left\|h\right\|_{\ell^\infty_{\Gamma}}}{\underbrace{\left\|h\right\|_{\ell^\infty}}}\\
					\lesssim &\exp\left(-\frac{c}{2}n\right)\left\|h\right\|_{\ell^\infty_{\Gamma}}.
				\end{split}\label{prop:Est:lem2:3b:2}
			\end{align}
			
			$\star$ For $n\in\N\backslash\lbrace0\rbrace$, $h\in\ell^\infty_{\Gamma}$ and $j\in\N$, we have using the inequality \eqref{in:ctilde}:
			\begin{align}
				\begin{split}
					\hspace{3cm}\mathclap{\hspace{8cm}\sum_{j_0\geq j+1}\ind_{j_0\in\left[0,-\frac{n\alpha^+}{2}\right[}\ind_{j-j_0\in\lbrace-nq-1,\hdots,np\rbrace}\underset{\leq(1+|j_0|)^{\gamma}}{\underbrace{(1+|j|)^{\gamma}}}\frac{e^{-c|j_0|}}{n^\frac{1}{2\mu}}\exp\left(-c\left(\frac{\left|n-\frac{j-j_0}{\alpha^+}\right|}{n^\frac{1}{2\mu}}\right)^{\frac{2\mu}{2\mu-1}}\right)\left|h_{j_0}\right|} & \\
					&\lesssim \underset{\lesssim 1}{\underbrace{\sum_{j_0\geq j+1}\ind_{j_0\in\left[0,-\frac{n\alpha^+}{2}\right[}\ind_{j-j_0\in\lbrace-nq-1,\hdots,np\rbrace}(1+|j_0|)^\gamma e^{-c|j_0|}}}\frac{1}{n^\frac{1}{2\mu}}  \exp(-\tilde{c}n)\underset{\leq\left\|h\right\|_{\ell^\infty_{\Gamma}}}{\underbrace{\left\|h\right\|_{\ell^\infty}}}\\
					&\lesssim \exp\left(-\tilde{c}n\right)\left\|h\right\|_{\ell^\infty_{\Gamma}}.
				\end{split}\label{prop:Est:lem2:3b:3}
			\end{align}
			
			$\star$ There exists a constant $\hat{c}>0$ such that, for $n\in\N\backslash\lbrace0\rbrace$, $h\in\ell^\infty_{\Gamma}$ and $j\in\N$, we have using \eqref{in:sommeGauss}:
			\begin{align}
				\begin{split}
					&\sum_{j_0\geq j+1}\ind_{j_0\in\left[-\frac{n\alpha^+}{2},+\infty\right[}\ind_{j-j_0\in\lbrace-nq-1,\hdots,np\rbrace}\underset{\leq(1+|j_0|)^{\gamma}}{\underbrace{(1+|j|)^{\gamma}}}\frac{e^{-c|j_0|}}{n^\frac{1}{2\mu}}\exp\left(-c\left(\frac{\left|n-\frac{j-j_0}{\alpha^+}\right|}{n^\frac{1}{2\mu}}\right)^{\frac{2\mu}{2\mu-1}}\right)\left|h_{j_0}\right|\\
					\leq& \sum_{j_0\in\N}\ind_{j_0\in\left[-\frac{n\alpha^+}{2},+\infty\right[} \underset{\lesssim1}{\underbrace{(1+|j_0|)^{\gamma}e^{-\frac{c}{2}|j_0|}}}\underset{\leq e^{-\hat{c}n}}{\underbrace{e^{-\frac{c}{2}|j_0|}}} \frac{1}{n^\frac{1}{2\mu}} \exp\left(-c\left(\frac{\left|n-\frac{j-j_0}{\alpha^+}\right|}{n^\frac{1}{2\mu}}\right)^{\frac{2\mu}{2\mu-1}}\right) \underset{\leq\left\|h\right\|_{\ell^\infty_{\Gamma}}}{\underbrace{\left\|h\right\|_{\ell^\infty}}}\\
					\lesssim &e^{-\hat{c}n}\underset{\lesssim1}{\underbrace{\sum_{j_0\in\N}\ind_{j_0\in\left[-\frac{n\alpha^+}{2},+\infty\right[} \left(\frac{1}{n^\frac{1}{2\mu}}\exp\left(-c\left(\frac{\left|n-\frac{j-j_0}{\alpha^+}\right|}{n^\frac{1}{2\mu}}\right)^{\frac{2\mu}{2\mu-1}}\right)\right)}}\left\|h\right\|_{\ell^\infty_{\Gamma}}\\
					\lesssim &e^{-\hat{c}n}\left\|h\right\|_{\ell^\infty_{\Gamma}}.
				\end{split}\label{prop:Est:lem2:3b:4}
			\end{align}
			
			Using \eqref{prop:Est:lem2:3:Overall} and \eqref{prop:Est:lem2:3b:2}-\eqref{prop:Est:lem2:3b:4} allows us to conclude the proof of \eqref{prop:Est:lem2:3b}.
		\end{subequations}
		
		\subsection{Proof of the estimate \texorpdfstring{\eqref{prop:Est:L^n(LD-L)}}{} on the operator \texorpdfstring{$\Ldsp^n(\LdspD-\Ldsp)$}{Ln(LD-L)}}\label{subsec:prop:Est:3}
		
		Let us fix the constants $\gamma,p\in[0,+\infty[$ that intervene in \eqref{prop:Est:L^n(LD-L)}. Let us start by studying the operator $\LdspD-\Ldsp$. We recall that the operator $\LdspD$ is defined by \eqref{def:linearizedScheme} and has the form:
		\begin{equation}\label{prop:Est:3:PR1}
			\forall \delta \in I,\forall h\in\ell^\infty(\Z),\forall j\in\Z,\quad (\LdspD h)_j:= \sum_{k=-p}^qa^\delta_{j,k}h_{j+k}
		\end{equation}
		where the scalars $a^\delta_{j,k}$ are defined by \eqref{def:Ajk}. As a consequence, we have:
		\begin{equation}\label{prop:Est:3:PR2}
			\forall \delta \in I,\forall h\in\ell^\infty(\Z),\forall j\in\Z,\quad ((\LdspD-\Ldsp) h)_j= \sum_{k=-p}^{q-1}\left(b^\delta_{j,k}-b^0_{j,k}\right)h_{j+k} - \sum_{k=-p}^{q-1}\left(b^\delta_{j+1,k}-b^0_{j+1,k}\right)h_{j+1+k}
		\end{equation}
		where the scalars $b^\delta_{j,k}$ are defined by \eqref{def:Bjk}. We make the following two observations:
		
		$\bullet$ Using the inequality \eqref{DSP_CV_ExpoDelta}, the fact that the set \eqref{set:CT} is relatively compact and the regularity of the numerical flux $F$, we observe that there exist two constants $C,c>0$ such that the scalars $b^\delta_{j,k}$ defined by \eqref{def:Bjk} satisfy:
		\begin{equation}\label{prop:Est:3:PR3}
			\forall \delta \in I, \forall j\in\Z,\forall k\in\left\{-p,\hdots,q-1\right\},\quad \left|b^\delta_{j,k}-b^0_{j,k}\right|\leq C|\delta|e^{-c|j|}.
		\end{equation}
		This implies that, for $\delta\in I$ and $h\in\ell^\infty(\Z)$, the series:
		$$\sum_{j\in\Z} \left(\sum_{k=-p}^{q-1}\left(b^\delta_{j,k}-b^0_{j,k}\right)h_{j+k}\right)$$
		converges. Therefore, the equality \eqref{prop:Est:3:PR2} implies that:
		\begin{equation}\label{prop:Est:3:MasseNulle}
			\forall \delta \in I,\forall h\in\ell^\infty(\Z),\quad \sum_{j\in\Z}((\LdspD-\Ldsp) h)_j=0.
		\end{equation}

		$\bullet$ Using the definition \eqref{def:Ajk} of the scalars $a^\delta_{j,k}$ and the inequality \eqref{prop:Est:3:PR3}, we have that there exist two constants $C,c>0$ such that:
		\begin{equation}\label{prop:Est:3:PR4}
			\forall \delta \in I,\forall j\in\Z,\forall k\in\left\{-p,\hdots,q\right\}\quad \left|a^\delta_{j,k}-a^0_{j,k}\right|\leq C|\delta|e^{-c|j|}.
		\end{equation}
		Combining \eqref{prop:Est:3:PR1} and \eqref{prop:Est:3:PR4}, this implies that for $h\in\ell^1(\Z)$, $\delta\in I$ and $j\in \Z$:
		$$\left|\left(\left(\LdspD-\Ldsp\right)h\right)_j\right| = \left|\sum_{k=-p}^q\left(a^\delta_{j,k}-a^0_{j,k}\right)h_{j+k}\right|\leq C(p+q+1)e^{-c|j|}|\delta|\left\|h\right\|_{\ell^\infty}.$$
		We can then conclude that there exists a constant $C>0$ such that:
		\begin{equation}\label{prop:Est:3:in}
			\forall \delta \in I,\forall h\in \ell^1(\Z), \quad \left\|\left(\LdspD-\Ldsp\right)h\right\|_{\ell^1_{\gamma+p}}\leq C|\delta|\left\|h\right\|_{\ell^\infty}.
		\end{equation}
		
		Combining \eqref{prop:Est:3:MasseNulle} and \eqref{prop:Est:3:in}, we have that the sequence $\left(\LdspD-\Ldsp\right)h$ belongs to $\E_{\gamma+p}$ for all $h\in\ell^\infty(\Z)$ and $\delta \in I$. Finally, using \eqref{prop:Est:L^n:1} and \eqref{prop:Est:3:in}, we easily conclude the proof of \eqref{prop:Est:L^n(LD-L)}.
	\end{proof}
	
	\section{Appendix}\label{sec:Appendix}

	\noindent\textbf{\large Proof of Lemma \ref{lem:inQ}}
	
	\vspace{0.3cm}Let us recall the statement of Lemma \ref{lem:inQ}:
	\begin{lemma*}
		Let us consider two constants $\gamma_1,\gamma_\infty\in[0,+\infty[$. 
		\begin{subequations}
			\begin{itemize}
				\item There exists a constant $C_{Q,1}(\gamma_1,\gamma_\infty)>0$ such that for any $\delta \in I$ and sequence $h\in \ell^1_{\gamma_1}\cap\ell^\infty_{\gamma_\infty}$ which verifies:
				$$\left\|h\right\|_{\ell^\infty}<\Rayon,$$
				then the sequence $\QD(h)$ belongs to $\ell^1_{\gamma_1+\gamma_\infty}$ and:
				\begin{equation}
					\left\|\QD(h)\right\|_{\ell^1_{\gamma_1+\gamma_\infty}} \leq C_{Q,1}(\gamma_1,\gamma_\infty)\left\|h\right\|_{\ell^1_{\gamma_1}}\left\|h\right\|_{\ell^\infty_{\gamma_\infty}}.\label{lem:inQ:ell1PR}
				\end{equation}
				\item There exists a constant $C_{Q,\infty}(\gamma_\infty)>0$ such that for any $\delta \in I$ and sequence $h\in \ell^\infty_{\gamma_\infty}$ which verifies:
				$$\left\|h\right\|_{\ell^\infty}<\Rayon,$$
				then the sequence $\QD(h)$ belongs to $\ell^\infty_{2\gamma_\infty}$ and:
				\begin{equation}
					\left\|\QD(h)\right\|_{\ell^\infty_{2\gamma_\infty}}\leq C_{Q,\infty}(\gamma_\infty){\left\|h\right\|_{\ell^\infty_{\gamma_\infty}}}^2.\label{lem:inQ:ellinftyPR}
				\end{equation}
			\end{itemize}
		\end{subequations}
	\end{lemma*}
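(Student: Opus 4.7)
\textbf{Proof plan for Lemma \ref{lem:inQ}.} The plan is to reduce everything to a pointwise quadratic bound on $\QD(h)_j$ and then handle the weights by absorbing bounded shifts in the index. First, I would use the definition of $\QD(h)_j$ together with the smoothness of $F$: by the Taylor formula with integral remainder, for any point at which all the arguments $\dspD_{j+k}+t h_{j+k}$ (for $t\in[0,1]$ and $k\in\{-p,\hdots,q-1\}$) stay in a fixed compact subset of $\Uc$, one has
\begin{equation*}
F(\nug;\dspD_{j-p}+h_{j-p},\hdots,\dspD_{j+q-1}+h_{j+q-1}) - F(\nug;\dspD_{j-p},\hdots,\dspD_{j+q-1}) - \sum_{k=-p}^{q-1}\frac{b^\delta_{j,k}}{\nug}h_{j+k} = \sum_{k,\ell=-p}^{q-1} r^\delta_{j,k,\ell}(h)\, h_{j+k}h_{j+\ell},
\end{equation*}
where the scalars $r^\delta_{j,k,\ell}(h)$ are bounded uniformly in $j\in\Z$, $\delta\in I$ and in $h$ satisfying $\|h\|_{\ell^\infty}<\Rayon$. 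The uniform boundedness is the key input and uses two facts already available: the set in \eqref{set:CT} is relatively compact in $\Uc$ (so the points $\dspD_{j+k}$ all lie in a fixed compact), and the definition \eqref{def:Rayon} of $\Rayon$ ensures $\dspD_{j+k}+t h_{j+k}$ remains in a fixed compact subset of $\Uc$ on which the second derivatives of $F$ are bounded. This yields the pointwise estimate
\begin{equation*}
|\QD(h)_j| \;\leq\; C \sum_{k,\ell=-p}^{q-1} |h_{j+k}|\,|h_{j+\ell}|
\end{equation*}
for some constant $C>0$ independent of $\delta$, $j$, and $h$.

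Next I would transfer the polynomial weights. Since $k,\ell$ range in the finite set $\{-p,\hdots,q-1\}$, there exists a constant $C_\gamma>0$ such that $(1+|j|)^\gamma \leq C_\gamma (1+|j+k|)^\gamma$ for any $\gamma\geq 0$ and any admissible $k$. For the first estimate \eqref{lem:inQ:ell1PR}, I would split the weight as $(1+|j|)^{\gamma_1+\gamma_\infty}\leq C (1+|j+k|)^{\gamma_1}(1+|j+\ell|)^{\gamma_\infty}$ and write
\begin{equation*}
\|\QD(h)\|_{\ell^1_{\gamma_1+\gamma_\infty}} \;\lesssim\; \sum_{k,\ell=-p}^{q-1}\sum_{j\in\Z}(1+|j+k|)^{\gamma_1}|h_{j+k}|\cdot (1+|j+\ell|)^{\gamma_\infty}|h_{j+\ell}| \;\lesssim\; \|h\|_{\ell^1_{\gamma_1}}\|h\|_{\ell^\infty_{\gamma_\infty}},
\end{equation*}
after pulling the $\ell^\infty_{\gamma_\infty}$ factor out of the sum over $j$ and re-indexing. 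For the second estimate \eqref{lem:inQ:ellinftyPR}, I would bound $(1+|j|)^{2\gamma_\infty}\leq C (1+|j+k|)^{\gamma_\infty}(1+|j+\ell|)^{\gamma_\infty}$ and estimate each factor by $\|h\|_{\ell^\infty_{\gamma_\infty}}$, giving the quadratic bound immediately.

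I do not anticipate a significant obstacle: the only delicate point is justifying that the Taylor expansion can be performed on a fixed compact subset of $\Uc$, but this is precisely what Hypothesis \ref{H:CVExpo} and the construction of $\Rayon$ in \eqref{def:Rayon} guarantee. Everything else is bookkeeping with polynomial weights and finite-range index shifts.
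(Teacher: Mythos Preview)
Your proposal is correct and follows essentially the same approach as the paper: a Taylor expansion of $F$ on the compact set provided by \eqref{def:Rayon} yields the pointwise quadratic bound $|\QD(h)_j|\lesssim \sum_{k,\ell}|h_{j+k}||h_{j+\ell}|$, and then the weight-shifting inequality $(1+|j|)^\gamma\lesssim(1+|j+k|)^\gamma$ for bounded $k$ is used exactly as you describe to split the exponent and conclude. The paper writes the quadratic remainder as $C\bigl(\sum_k|h_{j+k}|\bigr)^2$ rather than with explicit bilinear coefficients $r^\delta_{j,k,\ell}$, but this is cosmetic.
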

	
	\begin{proof}
		For the rest of the proof, we fix the two positive constants $\gamma_1$ and $\gamma_\infty$ that will appear throughout the proof. We recall that the set on the left side of \eqref{def:Rayon} is closed and contained in $\Uc$. Therefore, using the regularity of the numerical flux $F$ and Taylor's Theorem, there exists a constant $C>0$ such that:
		\begin{multline}\label{lem:inQ:1}
			\forall \delta \in I,\forall j \in\Z,\forall h_{-p},\hdots,h_{q-1}\in B(0,\Rayon),\\
			\left|\nug F\left(\nug;\dspD_{j-p}+h_{-p},\hdots,\dspD_{j+q-1}+h_{q-1}\right)-\nug F\left(\nug;\dspD_{j-p},\hdots,\dspD_{j+q-1}\right) -\sum_{k=-p}^{q-1}b^\delta_{j,k} h_k \right| \leq C\left(\sum_{k=-p}^{q-1}|h_k|\right)^2
		\end{multline}
		where the scalars $b^\delta_{j,k}$ are defined by \eqref{def:Bjk}. We also observe that there exists a positive constant $\widetilde{C}$:
		\begin{equation}\label{lem:inQ:2}
			\forall j\in\Z,\forall k\in\lbrace-p,\hdots,q-1\rbrace, \quad \begin{array}{c}
				(1+|j|)^{\gamma_1}\leq \widetilde{C} (1+|j+k|)^{\gamma_1},\\
				(1+|j|)^{\gamma_\infty}\leq \widetilde{C} (1+|j+k|)^{\gamma_\infty}.
			\end{array}
		\end{equation}
		
		Let us now start by proving \eqref{lem:inQ:ell1PR}. We consider $\delta \in I$ and $h\in\ell^1_{\gamma_1}\cap\ell^\infty_{\gamma_\infty}$ such that:
		$$\left\|h\right\|_{\ell^\infty}<\Rayon.$$
		For all $j\in\Z$, using \eqref{lem:inQ:1} and \eqref{lem:inQ:2}, we have that:
		\begin{align*}
			\left(1+|j|\right)^{\gamma_1+\gamma_\infty}\left|\QD(h)_j\right|&\leq C\widetilde{C}^2\left(\sum_{k=-p}^{q-1}\left(1+|j+k|\right)^{\gamma_1}|h_{j+k}|\right)\left(\sum_{k=-p}^{q-1}\left(1+|j+k|\right)^{\gamma_\infty}|h_{j+k}|\right)\\
			& \leq C\widetilde{C}^2(p+q)\left\|h\right\|_{\ell^\infty_{\gamma_\infty}}\left(\sum_{k=-p}^{q-1}\left(1+|j+k|\right)^{\gamma_1}|h_{j+k}|\right).
		\end{align*}
		Thus, by summing this inequality, we obtain \eqref{lem:inQ:ell1PR}.
		
		The proof of \eqref{lem:inQ:ellinftyPR} is fairly similar. We consider $\delta \in I$ and a sequence $h\in\ell^\infty_{\gamma_\infty}$ such that:
		$$\left\|h\right\|_{\ell^\infty}<\Rayon.$$
		For all $j\in\Z$, using \eqref{lem:inQ:1} and \eqref{lem:inQ:2}, we have that:
		\begin{align*}
			\left(1+|j|\right)^{2\gamma_\infty}\left|\QD(h)_j\right|&\leq C\widetilde{C}^2\left(\sum_{k=-p}^{q-1}\left(1+|j+k|\right)^{\gamma_\infty}|h_{j+k}|\right)^2\\
			& \leq C\widetilde{C}^2(p+q)^2{\left\|h\right\|_{\ell^\infty_{\gamma_\infty}}}^2.
		\end{align*}
		Thus, we obtain \eqref{lem:inQ:ellinftyPR}.
	\end{proof}
	
	\noindent\textbf{\large Proof of Lemma \ref{lem:InSum}}
	
	\vspace{0.3cm}Let us recall the statement of Lemma \ref{lem:InSum}.
	
	\begin{lemma*}
		We consider a triplet of positive constants $(a,b,c)\in[0,+\infty[^3$ that satisfies the condition \eqref{cond:H}. There exists a constant $C_I(a,b,c)>0$ such that, for all $n\in\N$, we have that:
		$$\sum_{m=0}^{\lfloor\frac{n+1}{2}\rfloor }\frac{1}{(m+1)^a(n+1-m)^b} \leq\frac{C_I(a,b,c)}{(n+2)^c}. $$
	\end{lemma*}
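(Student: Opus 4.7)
The plan is to exploit the asymmetry in the summation range: since $m$ only runs up to $\lfloor(n+1)/2\rfloor$, the factor $n+1-m$ is bounded below by a constant multiple of $n+2$. First I would observe that for every $m\in\{0,\dots,\lfloor(n+1)/2\rfloor\}$, one has $n+1-m\geq (n+1)/2\geq (n+2)/4$, so that $(n+1-m)^b\geq 4^{-b}(n+2)^b$ uniformly in $m$. Pulling this lower bound out of the denominator gives
\begin{equation*}
\sum_{m=0}^{\lfloor\frac{n+1}{2}\rfloor}\frac{1}{(m+1)^a(n+1-m)^b}\leq \frac{4^b}{(n+2)^b}\sum_{m=0}^{\lfloor\frac{n+1}{2}\rfloor}\frac{1}{(m+1)^a}.
\end{equation*}
This reduces the problem to controlling a one-sided sum of the form $\sum_{m=0}^{N}(m+1)^{-a}$ for $N:=\lfloor(n+1)/2\rfloor$, where the desired total bound is $C_I(a,b,c)(n+2)^{-c}$, i.e.\ this residual sum must be $O((n+2)^{b-c})$.

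Next I would perform a standard three-case discussion on $a$, matched exactly to the cases of condition \eqref{cond:H}. When $a>1$, comparison with the convergent series $\sum(m+1)^{-a}$ gives a bound independent of $n$, which together with the prefactor $(n+2)^{-b}$ yields $(n+2)^{-b}\leq C(n+2)^{-c}$ precisely because $b-c\geq 0$. When $a=1$, a comparison with $\int_0^{N+1}dx/(x+1)$ produces a factor of $\log(n+2)$; combined with the prefactor, this gives $(n+2)^{-b}\log(n+2)$, and the strict inequality $b-c>0$ in \eqref{cond:H} is exactly what is needed to absorb the logarithm into $(n+2)^{b-c}$. When $a\in[0,1[$, comparison with $\int_0^{N+1}dx/(x+1)^a$ yields a bound of order $(n+2)^{1-a}$, so the total is $O((n+2)^{1-a-b})$, and the hypothesis $1-a\leq b-c$ in \eqref{cond:H} gives exactly $(n+2)^{1-a-b}\leq(n+2)^{-c}$.

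There is essentially no obstacle here: each threshold in \eqref{cond:H} corresponds transparently to the integrability threshold of $\sum(m+1)^{-a}$ and the logarithmic correction at the critical exponent $a=1$. The only point that needs a small amount of care is the equality $a=1$, where one must verify that the condition is \emph{strict} ($b-c>0$) in order to beat the $\log$ factor, but this is already built into \eqref{cond:H}. The constant $C_I(a,b,c)$ can then be taken as the supremum over $n\in\N$ of $(n+2)^c/(n+1-m)^b$-weighted partial sums in each of the three regimes, all of which are finite by the arguments above.
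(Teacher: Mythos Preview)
Your proof is correct and follows essentially the same approach as the paper's: both exploit that $n+1-m$ is uniformly of order $n$ on the summation range to pull out the $b$-power, and then control the remaining partial sum $\sum_{m=0}^{N}(m+1)^{-a}$ by the same three-case discussion on $a$ matched to condition \eqref{cond:H}. The paper phrases the extraction step via a Riemann-sum normalization (factoring out $(n+1)^{a+b-1}$ and bounding $(1-m/(n+1))^{-b}\leq 2^b$) and then compares to $\int_{1/(n+1)}^1 t^{-a}\,dt$, whereas you bound $(n+1-m)^b$ directly and compare to $\int_0^{N+1}(x+1)^{-a}\,dx$; these are cosmetic differences and the arguments are interchangeable.
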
	
	
	\begin{proof}
		The proof is fairly immediate. First, we observe that for $n\in\N$:
		$$\sum_{m=0}^{\lfloor\frac{n+1}{2}\rfloor}\frac{1}{(m+1)^a(n+1-m)^b} = \frac{1}{(n+1)^{a+b-1}}\left(\frac{1}{n+1}\sum_{m=0}^{\lfloor\frac{n+1}{2}\rfloor}\frac{1}{\left(\frac{m+1}{n+1}\right)^a\left(1-\left(\frac{m}{n+1}\right)\right)^b}\right). $$
		Since for $m\in\lbrace0,\hdots,\lfloor\frac{n+1}{2}\rfloor\rbrace$, we have that for $n\in\N$:
		$$\frac{1}{\left(1-\left(\frac{m}{n+1}\right)\right)^b}\leq 2^b,$$
		we thus have
		$$\sum_{m=0}^{\lfloor\frac{n+1}{2}\rfloor}\frac{1}{(m+1)^a(n+1-m)^b} \leq  \frac{2^b}{(n+1)^{a+b-1}}\left(\frac{1}{n+1}\sum_{m=0}^{\lfloor\frac{n+1}{2}\rfloor}\frac{1}{\left(\frac{m+1}{n+1}\right)^a}\right). $$
		We observe that:
		$$\forall n\in\N,\forall m \in\left\{1,\hdots,\lfloor\frac{n+1}{2}\rfloor\right\},\quad \frac{1}{n+1}\frac{1}{\left(\frac{m+1}{n+1}\right)^a}\leq \int_{\frac{m}{n+1}}^{\frac{m+1}{n+1}}\frac{dt}{t^a}.$$
		Thus, we have :
		$$\forall n\in\N,\quad \frac{1}{n+1}\sum_{m=0}^{\lfloor\frac{n+1}{2}\rfloor}\frac{1}{\left(\frac{m+1}{n+1}\right)^a} \leq (n+1)^{a-1}+\int_{\frac{1}{n+1}}^{1}\frac{dt}{t^a}.$$
		We can then easily conclude the proof by separating the cases $a\in[0,1[$, $a=1$ and $a>1$.
	\end{proof}
	
	\textbf{Acknowledgments:} The author wishes to thank Jean-François Coulombel and Grégory Faye for numerous discussions throughout the research surrounding this paper as well as for leading him towards a simpler redaction of the proof of Theorem \ref{Th}. He also warmly thanks Frédéric Rousset for bringing to his attention the main idea to generalize Theorem \ref{Th} for nonzero-mass initial perturbations.
	
	\printbibliography
\end{document}